\title[]{On the supercritical defocusing NLW outside a ball}
\numberwithin{equation}{section}
\newtheorem{theorem}{Theorem}[section]
\newtheorem{lemma}[theorem]{Lemma}
\newtheorem{proposition}[theorem]{Proposition}
\theoremstyle{remark}
\newtheorem{remark}{Remark}[section]
\theoremstyle{definition}
\newtheorem{definition}[theorem]{Definition}
\newcommand{\bra}[1]{\langle #1 \rangle}
\newcommand{\one}[1]{\mathbf{1}_{#1}}
\date{\today}
\author[P.~D'Ancona]{Piero D'Ancona}
\address{Piero D'Ancona: 
Dipartimento di Matematica\\
Sapienza Universit\`{a} di Roma\\
Piazzale A.~Moro 2\\
00185 Roma\\
Italy}
\email{dancona@mat.uniroma1.it}
\thanks{%
The author is partially supported by the Project Ricerca Scientifica Sapienza 2016: ``Metodi di Analisi Reale e Armonica per problemi stazionari ed evolutivi''.
}
\subjclass[2010]{%
35L05
; 58J45
; 35L20
}
\keywords{Supercritical wave equation%
; exterior problem%
; decay estimates%
; global existence}
\begin{document}

\begin{abstract}
  We study a defocusing semilinear wave equation, with a power nonlinearity $|u|^{p-1}u$, defined outside the unit ball of $\mathbb{R}^{n}$, $n\ge3$, with Dirichlet boundary conditions. We prove that if $p>n+3$ and the initial data are nonradial perturbations of large radial data, there exists a global smooth solution. The solution is unique among energy class solutions satisfying an energy inequality. The main tools used are the Penrose transform and a Strichartz estimate for the exterior linear wave equation perturbed with a large, time dependent potential.
\end{abstract}

\maketitle



\section{Introduction}\label{sec:intr}

Consider the Cauchy problem for the defocusing wave equation on 
$\mathbb{R}_{t}\times \mathbb{R}^{n}_{x}$:
\begin{equation}\label{eq:WEgen}
  \square u+|u|^{p-1}u=0,
  \qquad
  u(0,x)=u_{0}(x),
  \qquad
  u_{t}(0,x)=u_{1}(x)
\end{equation}
with Sobolev initial data $(u_{0},u_{2})\in H^{s}\times H^{s-1}$.
The existence of global solutions to this problem has 
been explored in considerable detail. 
The \emph{critical power} for global smooth solvability is
$p_{cr}(n)=1+\frac{4}{n-2}$ for $n\ge3$, 
while $p_{cr}(1)=p_{cr}(2)=\infty$.
Global existence in the energy space is known for
$p\le p_{cr}(n)$
\cite{GinibreVelo94-a},
\cite{ShatahStruwe94-a},
\cite{Kapitanski94};
regularity for $p=p_{cr}$
has been explicitly proved up to dimension 7
\cite{ShatahStruwe93-a},
\cite{Kapitanski94} and should
hold for all dimensions.
For \emph{supercritical} nonlinearities $p>p_{cr}(n)$,
very little is known, and the question of global existence 
or blow up is still open.

If we restrict to spherically symmetric solutions, the
difficulty of the problem is essentially the same.
However, if a radial solution blows up, 
the blow up must occur at the origin.
This follows at once from the Sobolev embedding for radial 
functions
\begin{equation}\label{eq:radialstrauss}
  |x|^{\frac{n}{2}-1}|u(x)|\lesssim\|\partial u\|_{L^{2}},
\end{equation}
which is a well known special
case of the family of inequalities
\begin{equation}\label{eq:nonradialstr}
  |x|^{\frac np-\sigma}|u(x)|
  \lesssim
  \||D|^{\sigma}u\|_{L^{p}_{|x|}L^{r}_{\omega}},
  \qquad
  \frac{n-1}{r}+\frac1p<\sigma<\frac np
\end{equation}
(see \cite{DAnconaLuca12-a}). Here the norm
$L^{p}_{|x|}L^{r}_{\omega}$ is an $L^{p}$ norm
in the radial direction of the $L^{r}_{\omega}$
norm in the angular direction. We give a precise
statement of this blow--up alternative result
including its short proof:

\begin{proposition}[]\label{pro:blowupat0}
  Let $u_{0},u_{1}$ be spherically symmetric test functions, 
  and $T$ the supremum of times $\tau>0$ such that
  a smooth solution $u(t,x)$ of \eqref{eq:WEgen} exists on
  $[0,\tau]\times \mathbb{R}^{n}$.
  Assume $T<\infty$.
  Then $u$ blows up at $x=0$ as $t\to T$,
  in the sense that for any $R>0$,
  $u$ is unbounded on the cylinder
  $\{|x|<R,\ t<T\}$.
\end{proposition}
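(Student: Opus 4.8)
The plan is to argue by contradiction: suppose $u$ remains bounded on some cylinder $\{|x|<R,\ t<T\}$, and show that then the solution extends smoothly past $T$, contradicting the maximality of $T$. The mechanism is \emph{finite speed of propagation} combined with the radial Sobolev embedding \eqref{eq:radialstrauss}, which confines any loss of regularity to a shrinking neighborhood of the origin. First I would fix $R>0$ such that $u$ is bounded, say $|u|\le M$, on $C_R=\{|x|<R,\ 0\le t<T\}$. Away from the origin the equation is effectively ``subcritical'': on the exterior region $\{|x|\ge \varepsilon\}$ one can use \eqref{eq:radialstrauss} (valid for $t<T$ since $\|\partial u(t)\|_{L^2}$ is controlled by the energy, which is nonincreasing for the defocusing equation) to get $|u(t,x)|\lesssim \varepsilon^{1-n/2}$ there; so the nonlinearity $|u|^{p-1}u$ is bounded on $\{\varepsilon\le |x|\}\cap\{t<T\}$ by a constant depending only on $\varepsilon$, the energy, and (inside $|x|<R$) on $M$. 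Thus on any region bounded away from the spacetime point $(T,0)$ the forcing term is bounded, hence the solution stays smooth there by standard linear energy estimates and the usual bootstrap (differentiating the equation, using that $\partial_u(|u|^{p-1}u)=p|u|^{p-1}$ is bounded where $u$ is bounded).

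Next I would localize near the origin using the domain of dependence. Pick $t_0<T$ close to $T$; at time $t_0$ the data $(u(t_0),u_t(t_0))$ are smooth (by the previous paragraph, and by local regularity for $t<T$), and by finite speed of propagation the solution on the backward cone from $(T,0)$ is determined by the data on $\{|x|\le T-t_0\}$ at time $t_0$. The point is that, because of the contradiction hypothesis, $u$ is bounded on this cone; more importantly, the \emph{energy flux} through the lateral boundary of the cone is finite (energy identity), so the local energy $\int_{|x|\le T-t} |\partial u(t,x)|^2\,dx$ stays bounded as $t\to T$. Rescaling the backward light cone to a fixed cone and invoking the a priori $L^\infty$ bound on $u$, one sets up an iteration: the $L^\infty$ bound makes the nonlinearity Lipschitz in $u$ with a bounded constant, so the local existence time for the (smooth) Cauchy problem with data at time $t$ is bounded below uniformly as $t\to T$; this continues the smooth solution past $T$, the desired contradiction.

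The main obstacle I anticipate is making the near-origin argument fully rigorous: the radial Sobolev bound \eqref{eq:radialstrauss} degenerates precisely at $x=0$, so it gives no pointwise control there, and one must genuinely use the hypothesized boundedness of $u$ on the cylinder together with finite propagation speed to close the loop. Concretely, the delicate point is to show that the \emph{higher} Sobolev norms (not just the $L^\infty$ norm) of $u$ do not blow up as $t\to T$ on the backward cone; this requires a careful energy estimate for the variable-coefficient linear equation $\square v = -p|u|^{p-1} v + (\text{lower order})$ obtained by differentiating, where one exploits that $|u|^{p-1}$ is a bounded (though not smooth, if $p$ is not an odd integer) coefficient. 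For noninteger $p$ one should work with $\partial_x$ and $\partial_t$ derivatives only up to the order permitted by the regularity of $s\mapsto |s|^{p-1}s$, or equivalently use the standard trick of treating $|u|^{p-1}u$ as a bounded forcing term in Strichartz/energy estimates rather than differentiating it, and then bootstrap. Since the statement only claims \emph{boundedness} fails (not a specific rate), even the crude bootstrap suffices: if all is bounded near $(T,0)$ the solution extends, so blow-up must register as unboundedness on every cylinder $\{|x|<R,\ t<T\}$.
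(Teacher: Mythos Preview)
Your overall strategy---assume boundedness on a cylinder and derive a contradiction via continuation---matches the paper's, and it is correct. But you are working much harder than necessary. The paper's proof is essentially three lines: energy conservation plus the radial estimate \eqref{eq:radialstrauss} give $|x|^{n/2-1}|u(t,x)|\le C E(0)^{1/2}$ for all $t<T$ and all $x\ne0$; combined with the hypothesis $|u|\le M$ on $\{|x|<R\}$, this yields the \emph{global} bound
\[
  |u(t,x)|\le M + C E(0)^{1/2}R^{1-n/2}
  \quad\text{on all of }[0,T)\times\mathbb{R}^{n}.
\]
Then the standard blow-up alternative (if $\|u\|_{L^{\infty}}$ stays bounded, the smooth solution extends past $T$) finishes the argument immediately.

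Your light-cone localization, energy-flux computation, and rescaling are all unnecessary: the hypothesis already controls $u$ on the entire cylinder $\{|x|<R\}$, including a full neighborhood of the origin, while the radial Sobolev bound controls $\{|x|\ge R\}$. The two regions together cover all of space---there is nothing special about the spacetime point $(T,0)$ once you have combined them. In particular you need not isolate the backward cone or track local energy there. The concerns in your third paragraph about propagating higher Sobolev norms and about non-integer $p$ are legitimate in general, but here they are exactly what is packaged into the phrase ``standard arguments'' (the blow-up alternative for semilinear wave equations with locally Lipschitz nonlinearity; compare Lemma~\ref{lem:localfu} for the exterior version), and the paper simply invokes it.
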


\begin{proof}
  By standard local existence results
  (e.g.~\cite{ShatahStruwe98-a}, Chapter 5) we know that
  $T>0$, and the
  solution is spherically symmetric for all $t\in[0,T)$
  by uniqueness of smooth solutions.
  Denote the energy of the solution $u$ at time $t$ by
  \begin{equation*}
    \textstyle
    E(t):=
    \frac12
    \|\partial_{t,x}u(t)\|_{L^{2}(\mathbb{R}^{n})}^{2}
    +
    \frac{1}{p+1}
    \int|u(t)|^{p+1}dx.
  \end{equation*}
  By conservation of energy and the radial estimate 
  \eqref{eq:radialstrauss} we have
  \begin{equation*}
    \textstyle
    |x|^{n-2}|u(t,x)|^{2}
    \le
    CE(t)=CE(0),
    \qquad
    0\le t<T.
  \end{equation*}
  Assume by contradiction that
  $|u(t,x)|\le M$ for $(t,x)$ in some cylinder
  $\{|x|<R,\ t<T\}$, then by the previous bound we get
  \begin{equation*}
    |u(t,x)|\le M+(CE(0))^{1/2}R^{1-n/2}<\infty
    \quad\text{for}\quad 
    0\le t<T,\quad x\in \mathbb{R}^{n}
  \end{equation*}
  and by standard arguments we
  can extend $u$ to a strip $[0,T']\times \mathbb{R}^{n}$
  for some $T'>T$, against the assumptions.
\end{proof}

In order to obtain a global radial solution, it would be
sufficient to prevent blow-up at 0. This is the case
if we replace the whole space with the exterior of a ball:
\begin{equation*}
  \Omega=\{x\in \mathbb{R}^{n}:|x|>1\}
\end{equation*}
and consider the mixed problem on $\mathbb{R}^{+}\times \Omega$
with Dirichlet boundary conditions
\begin{equation}\label{eq:pbm}
  \square u+|u|^{p-1}u=0,
  \qquad
  u(0,x)=u_{0}(x),
  \qquad
  u_{t}(0,x)=u_{1}(x),
  \qquad
  u(t,\cdot)\vert_{\partial \Omega}=0.
\end{equation}
In this situation the local solution can not blow up, due to the 
conservation of $H^{1}$ energy and the radial estimate
\eqref{eq:radialstrauss}. This implies the existence of a global
radial solution for arbitrary powers $p>1$.
Indeed, we have the following
result, where we use the notation
\begin{equation*}
  C^{k}X=C^{k}(\mathbb{R}^{+};X),
  \qquad
  X=L^{2}(\mathbb{R}^{n})
  \ \text{or}\ 
  H^{s}(\mathbb{R}^{n})
  \ \text{or}\ 
  H^{1}_{0}(\Omega).
\end{equation*}

\begin{proposition}[]\label{pro:radial}
  Let $\Omega=\mathbb{R}^{n}\setminus \overline{B(0,1)}$, 
  $n\ge2$, $p>1$ and let
  $u_{0}\in H^{1}_{0}(\Omega)\cap H^{2}(\Omega)
    \cap L^{p+1}(\Omega)$, 
  $u_{1}\in H^{1}_{0}(\Omega)$
  be two radially symmetric functions.

  Then the mixed problem \eqref{eq:pbm}
  has a global solution 
  $u\in C^{2}L^{2}\cap C^{1}H^{1}_{0}\cap CH^{2}$,
  satisfying the conservation of energy
  \begin{equation}\label{eq:consE}
    \textstyle
    E(u(t))=E(u(0)),
    \qquad
    E(u(t)):=
    \frac12
    \|\partial_{t,x}u(t)\|_{L^{2}(\Omega)}^{2}
    +
    \frac{1}{p+1}
    \int_{\Omega}|u(t)|^{p+1}dx
  \end{equation}
  and the uniform bound
  \begin{equation}\label{eq:unibound}
    \|u\|_{L^{\infty}(\mathbb{R}^{+}\times \Omega)}
    \lesssim
    C(\|u_{0}\|_{H^{1}},\|u_{1}\|_{L^{2}}).
  \end{equation}
  If $v\in C^{2}L^{2}\cap C^{1}H^{1}_{0}\cap CH^{2}$ is
  a second solution of \eqref{eq:pbm}
  with the same data
  which is either radially symmetric in $x$,
  or locally bounded on $\mathbb{R}^{+}\times \Omega$, then
  $v \equiv u$.
\end{proposition}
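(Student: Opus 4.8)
The plan is to combine standard local well-posedness with two global a priori bounds: conservation of the energy \eqref{eq:consE}, and the pointwise bound coming from the radial Sobolev inequality \eqref{eq:radialstrauss}. The decisive point is that on $\Omega$ one has $|x|\ge1$, so for a radial function \eqref{eq:radialstrauss} gives $\|u\|_{L^\infty(\Omega)}\lesssim\|\nabla u\|_{L^2(\Omega)}$ when $n\ge3$; that is, the conserved finite-energy norm controls the sup norm, and since the nonlinearity is $|u|^{p-1}u$ this is enough to propagate all the regularity globally. First I would invoke local well-posedness for semilinear wave equations (e.g.\ \cite{ShatahStruwe98-a}, Ch.~5), adapted to the exterior domain via the Dirichlet propagators $\cos(t\sqrt{-\Delta_D})$ and $\sin(t\sqrt{-\Delta_D})/\sqrt{-\Delta_D}$, which preserve $H^1_0(\Omega)$ and $H^2(\Omega)\cap H^1_0(\Omega)$; a contraction mapping in $C([0,T];H^2\cap H^1_0)\cap C^1([0,T];H^1_0)\cap C^2([0,T];L^2)$ restricted to the radial subspace closes because $\|\,|u|^{p-1}u\,\|_{H^1}\lesssim\|u\|_{L^\infty}^{p-1}\|u\|_{H^1}$ and, for radial $u$, $\|u\|_{L^\infty(\Omega)}\lesssim\|u\|_{H^2(\Omega)}$ (by \eqref{eq:radialstrauss} for $n\ge3$, or by the unconditional embedding $H^2(\Omega)\hookrightarrow L^\infty(\Omega)$ for $n\le3$). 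This produces a maximal solution on $[0,T^*)$, which is radial for all $t$ since the data, the domain and the boundary condition are $O(n)$-invariant and smooth solutions are unique, so $u(t,Rx)$ solves the same problem.

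Next I would establish conservation of energy: multiplying \eqref{eq:pbm} by $u_t$ and integrating over $\Omega$, the boundary flux $\int_{\partial\Omega}\partial_\nu u\,u_t$ vanishes because $u_t|_{\partial\Omega}=0$, and the available regularity $u\in C^2L^2\cap C^1H^1_0\cap CH^2$ justifies every integration by parts and the differentiation of $t\mapsto\int|u|^{p+1}$, so $E(u(t))=E(u(0))$ on $[0,T^*)$. Then $\|\partial_{t,x}u(t)\|_{L^2}^2\le2E(0)$, hence $\|u(t)\|_{L^\infty(\Omega)}\le C(E(0))$ for $n\ge3$; since for radial $u_0$ one has $\int_\Omega|u_0|^{p+1}\le\|u_0\|_{L^\infty}^{p-1}\|u_0\|_{L^2}^2\lesssim\|u_0\|_{H^1}^{p+1}$, the energy and this sup bound are controlled by $C(\|u_0\|_{H^1},\|u_1\|_{L^2})$, which is \eqref{eq:unibound}. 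For $n=2$ the embedding \eqref{eq:radialstrauss} fails, and one instead combines $\|\nabla u(t)\|_{L^2}^2\le2E(0)$ with $\int_\Omega|u(t)|^{p+1}\le(p+1)E(0)$: if $|u(t,r_\ast)|=M$ at a radius $r_\ast\ge1$ then, $u$ being radial with $u(t,1)=0$, $|u(t,r)|\ge M/2$ on an interval of logarithmic length $\sim M^2/E(0)$, and plugging this into the $L^{p+1}$ bound forces $M\le C(E(0))$. With $\|u\|_{L^\infty}$ under control, differentiating the equation once (rigorously through a Galerkin approximation, since a priori $u_t\notin H^2$) yields $\tfrac{d}{dt}\bigl(\|\nabla u_t\|_{L^2}^2+\|\Delta u\|_{L^2}^2\bigr)=-p\int|u|^{p-1}\nabla u\cdot\nabla u_t\lesssim_{E(0)}1+\|\nabla u_t\|_{L^2}^2$ (the boundary term again drops because $|u|^{p-1}u=0$ on $\partial\Omega$), so Grönwall bounds $\|u(t)\|_{H^2}+\|u_t(t)\|_{H^1}$ by an exponential in $t$; this contradicts the blow-up alternative of the local theory unless $T^*=\infty$, giving a global solution in $C^2L^2\cap C^1H^1_0\cap CH^2$.

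For uniqueness, let $v$ be a second solution in the stated class with the same data. If $v$ is radial, the energy identity above applies verbatim to $v$, so $E(v(t))=E(0)$ and $v$ is bounded on $\mathbb{R}^+\times\Omega$; hence in either case we may assume $v$ is locally bounded. Put $w=u-v$: then $\square w=-(|u|^{p-1}u-|v|^{p-1}v)$ with zero Cauchy data and $w|_{\partial\Omega}=0$, and $\bigl|\,|u|^{p-1}u-|v|^{p-1}v\,\bigr|\le p\max(|u|,|v|)^{p-1}|w|$. A finite-speed-of-propagation energy estimate for $w$ on backward light cones — truncated by $\partial\Omega$, where the Dirichlet condition gives the boundary flux term the favorable sign — together with Grönwall in the cone variable and the local boundedness of $u$ and $v$, forces $w\equiv0$ on each such cone, hence $v\equiv u$.

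I expect the genuine work to lie in the rigorous justification of the higher-order a priori estimate (the Galerkin/commutator bookkeeping needed to differentiate the equation when $u_t$ is only in $H^1_0$) and in handling the Dirichlet boundary in the light-cone uniqueness argument; the $n=2$ sup bound also needs the small separate computation indicated above. Conceptually, though, nothing is hard: everything rests on the single observation that, on the exterior of the ball, the radial Sobolev embedding upgrades the conserved $H^1$ energy to an $L^\infty$ bound — exactly the ingredient missing in the whole-space problem.
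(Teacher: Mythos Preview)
Your approach is correct, but the paper takes a slicker route that sidesteps exactly the parts you flag as ``genuine work''. Instead of local existence plus continuation, the paper truncates the nonlinearity: for $M>0$ set $f_M(s)=\min\{|s|,M\}^{p-1}s$, which is globally Lipschitz with $f_M(0)=0$. For such $f_M$ one has a \emph{global} solution in $C^2L^2\cap C^1H^1_0\cap CH^2$ with energy conservation by a direct contraction argument (Lemmas \ref{lem:lipschf}--\ref{lem:H2reg}); no continuation step, no differentiation of the equation, and no Galerkin bookkeeping is needed because the Lipschitz constant is fixed once and for all. Then energy conservation plus \eqref{eq:radialstrauss} give $|x|^{\frac n2-1}|u(t,x)|\le C_0K$, hence $|u|\le C_0K$ on $\Omega$ since $|x|\ge1$; choosing $M=C_0K+1$ makes $f_M(u)\equiv|u|^{p-1}u$, so the global truncated solution already solves the original problem. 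Uniqueness is likewise reduced to the Lipschitz case. Your direct route works too and makes the role of the $L^\infty$ bound more transparent; your separate $n=2$ argument, using the $L^{p+1}$ part of the energy to compensate the logarithmic failure of \eqref{eq:radialstrauss}, is a genuine addition not spelled out in the paper.
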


The existence of large radial solutions suggests
naturally the question of stability: do nonradial perturbations
of radial data give rise to global solutions?

The uniform bound \eqref{eq:unibound} is not
sufficient for a perturbation argument. 
However, we can prove
an actual decay estimate, obtained by reduction to a mixed 
problem with moving boundary on $\mathbb{S}^{n}$ via the
Penrose transform. Define for $M>1$ the quantity
\begin{equation}\label{eq:weighsob}
  \textstyle
  C_{M}(u_{0},u_{1}):=
  \|u_{0}\|_{H^{N_{0}+1,N_{0}}(|x|\ge M)}
  +
  \|u_{1}\|_{H^{N_{0},N_{0}+1}(|x|\ge M)},
  \qquad
  N_{0}=\lfloor \frac n2 \rfloor+1.
\end{equation}
Then we have

\begin{theorem}[Decay of the radial solution]\label{the:decayrad}
  Let $u$ be the solution constructed in 
  Proposition \ref{pro:radial}. Assume that for some $M>1$
  the data satisfy
  \begin{equation}\label{eq:defX}
    \|(u_{0},u_{1})\|_{M}:=C_{M}(u_{0},u_{1})+
    \|u_{0}\|_{H^{2}}+\|u_{1}\|_{H^{1}}+
    \|\bra{x}^{n \frac{p-1}{p+1}-1}u_{0}\|_{L^{p+1}}
    <\infty.
  \end{equation}
  Then the following decay estimate holds
  \begin{equation}\label{eq:decaysol}
    |u(t,x)|\le C\bigl(\|(u_{0},u_{1})\|_{M}\bigr)\cdot
    |x|^{1-\frac n2}\langle t+|x| \rangle^{-\frac 12}
    \langle t-|x| \rangle^{-\frac 12}.
  \end{equation}
\end{theorem}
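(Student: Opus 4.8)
The plan is to use the Penrose transform to conformally compactify the exterior region $\{t\ge0,\ |x|\ge1\}$ of Minkowski space, so that the $t\to\infty$ decay \eqref{eq:decaysol} becomes a mere \emph{boundedness} statement up to a finite boundary on the Einstein cylinder $\mathbb R_T\times\mathbb S^n$, and then to prove that boundedness by an energy estimate followed by a Strichartz--based bootstrap.

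\textbf{Step 1 (Penrose transform).} Introduce $T,R$ by $t+|x|=\tan\frac{T+R}{2}$, $t-|x|=\tan\frac{T-R}{2}$, so that the flat metric becomes $\Omega^{-2}\bigl(-dT^2+dR^2+\sin^2R\,d\omega^2\bigr)$ with conformal factor $\Omega=2\cos\frac{T+R}{2}\cos\frac{T-R}{2}=\cos T+\cos R$; one checks that $\Omega=\frac{2}{\langle t+|x|\rangle\langle t-|x|\rangle}$ and $\sin R=|x|\,\Omega$, and that $\{t\ge0,\ |x|\ge1\}$ is carried onto a curvilinear triangle $\mathcal D\subset\mathbb R_T\times\mathbb S^n$ bounded by the initial slice $\Sigma_0=\{T=0,\ R\ge\pi/2\}$, by a piece of future null infinity $\mathscr I^+=\{T+R=\pi\}$, and by the image $\Gamma$ of the static cylinder $\{|x|=1\}$, given by $\sin R-\cos R=\cos T$ (the branch running from $(0,\pi/2)$ to future timelike infinity $i^+=(\pi,0)$). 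Since $\{|x|=1\}$ is timelike and conformal maps preserve causal type, $\Gamma$ is a timelike hypersurface, and along $\Gamma$ the radius $R$ is a non-increasing function of $T$ moving at subluminal speed (``the obstacle recedes''). Setting $v=\Omega^{-(n-1)/2}u$ and using that the Einstein cylinder has scalar curvature $n(n-1)$, the conformal covariance of the wave operator turns \eqref{eq:pbm} into
\begin{equation*}
  \partial_T^2 v-\Delta_{\mathbb S^n}v+\tfrac{(n-1)^2}{4}v+\Omega^{\beta}|v|^{p-1}v=0\quad\text{on }\mathcal D,\qquad v\big|_\Gamma=0,\qquad \beta=\tfrac{(n-1)p-n-3}{2},
\end{equation*}
where $p>n+3$ makes $\beta>0$ (indeed $\beta>\tfrac{(n+3)(n-2)}{2}$), so the nonlinearity is \emph{damped} near $\mathscr I^+$ where $\Omega\to0$, while the mass term has the coercive sign. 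The weighted norms in \eqref{eq:defX} carry exactly the Jacobian and conformal factors of this change of variables, so \eqref{eq:defX} says precisely that $(v(0),\partial_T v(0))\in H^{N_0+1}(\Sigma_0)\times H^{N_0}(\Sigma_0)$ and $v(0)\in L^{p+1}(\Sigma_0)$, the plain norms $\|u_0\|_{H^2}+\|u_1\|_{H^1}$ and the data of Proposition \ref{pro:radial} controlling $v$ near $\Gamma$ and near spatial infinity $i^0=(0,\pi)$.

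\textbf{Step 2 (energy bound on the cylinder).} For $\Sigma_T=\mathcal D\cap\{T=\text{const}\}$ put
\begin{equation*}
  \mathcal E(T)=\int_{\Sigma_T}\Bigl(\tfrac12|\partial_T v|^2+\tfrac12|\nabla_{\mathbb S^n}v|^2+\tfrac{(n-1)^2}{8}|v|^2+\tfrac1{p+1}\Omega^{\beta}|v|^{p+1}\Bigr)dS.
\end{equation*}
Multiplying the equation by $\partial_T v$, integrating over $\Sigma_T$ and keeping track of the two moving endpoints, I would show that $\frac{d}{dT}\mathcal E(T)$ is a sum of three non-positive contributions: the bulk term $\frac1{p+1}\int_{\Sigma_T}\partial_T(\Omega^{\beta})|v|^{p+1}\le0$, since $\partial_T\Omega=-\sin T\le0$ on $[0,\pi)$; the flux through the outer edge lying on $\mathscr I^+$, which is $\le0$ because that edge recedes at the speed of light and energy is radiated outward; and the flux through $\Gamma$, which under the Dirichlet condition reduces to a multiple of $\int(\partial_\nu v)^2$ whose sign is fixed by the subluminal, receding motion of $\Gamma$. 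Hence $\mathcal E(T)\le\mathcal E(0)\lesssim\|(u_0,u_1)\|_M^2$ for all $T\in[0,\pi)$, and integrating the identity in addition yields the spacetime bound $\int_0^{\pi}\!\int_{\Sigma_T}\Omega^{\beta-1}\sin T\,|v|^{p+1}\,dS\,dT\lesssim\|(u_0,u_1)\|_M^2$ and control of the two boundary fluxes.

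\textbf{Step 3 (pointwise bound via Strichartz, and conclusion).} Step 2 only gives $v\in L^\infty_T(H^1\cap L^{p+1})(\Sigma_T)$, which for $n\ge3$ does not embed in $L^\infty$, so the supercritical nonlinearity must be handled dispersively: I would view $v$ as a solution of the \emph{linear} exterior wave equation on a portion of $\mathbb R\times\mathbb S^n$ perturbed by the large, time-dependent potential $W=\Omega^{\beta}|v|^{p-1}$ --- which by Step 2 is controlled in a suitable weighted mixed norm --- and invoke a Strichartz estimate for the exterior wave equation perturbed by such a potential to place $v$ in a Strichartz space; a bootstrap then gives $\|v\|_{L^\infty(\mathcal D)}<\infty$. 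Once $W$ is bounded, commuting the equation with $\partial_T$ and with angular derivatives and propagating the $H^{N_0+1}\times H^{N_0}$ regularity of the data (this is why $N_0=\lfloor n/2\rfloor+1$ occurs in $C_M$) gives $\sup_T\|v(T)\|_{H^{N_0+1}(\Sigma_T)}<\infty$, whence $\|v\|_{L^\infty(\mathcal D)}\le C(\|(u_0,u_1)\|_M)$ by Sobolev embedding on $\mathbb S^n$. Transforming back, $|u(t,x)|=\Omega^{(n-1)/2}|v|\le C(\|(u_0,u_1)\|_M)\,\Omega^{(n-1)/2}$, and since $\langle t+|x|\rangle\langle t-|x|\rangle\ge|x|$ for $t\ge0$, $|x|\ge1$,
\begin{equation*}
  \Omega^{(n-1)/2}=\frac{2^{(n-1)/2}}{(\langle t+|x|\rangle\langle t-|x|\rangle)^{(n-1)/2}}\le\frac{2^{(n-1)/2}}{|x|^{(n-2)/2}\langle t+|x|\rangle^{1/2}\langle t-|x|\rangle^{1/2}},
\end{equation*}
which is \eqref{eq:decaysol} (note $|x|^{1-n/2}=|x|^{-(n-2)/2}$). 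The main obstacle is precisely the Strichartz estimate of Step 3: establishing dispersive bounds for the exterior linear wave equation on a domain with a (conformally) moving boundary, perturbed by a large time-dependent potential, is the technical heart of the argument, and it is here that the boundedness of the time interval $[0,\pi)$ together with the strong damping $\Omega^{\beta}$ (with $\beta$ large because $p>n+3$) is what makes the estimate feasible.
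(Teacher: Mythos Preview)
Your Steps 1 and 2 follow the paper closely: the Penrose transform, the transformed equation with damped coefficient $\Omega^{\beta}$, and the monotone energy $\mathcal E(T)\le\mathcal E(0)$ (with the flux through $\Gamma$ having the right sign because the boundary curve satisfies $|\gamma'|>1$) are exactly what the paper does.

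The genuine gap is Step 3. You overlooked that $v$ is \emph{radial} on $\mathbb S^n$ (it depends only on $(T,R)$), and for radial functions the $H^1$ energy already controls $L^\infty$ with a weight. The paper proves the elementary one--dimensional inequality (its Lemma~3.4)
\[
\sup_{s\in I}s^{\frac n2-1}|V(s)|\ \lesssim\ \Bigl(\int_I s^{n-1}|V'|^2\Bigr)^{1/2}+|I|^{-1}\Bigl(\int_I s^{n-1}|V|^2\Bigr)^{1/2},
\]
applies it to $V=v(T,\cdot)$ on the interval $I=[\Gamma(T),\,\pi-T+\delta/4]$, and gets directly $(\sin R)^{\frac n2-1}|v(T,R)|\lesssim \mathcal E(0)^{1/2}$. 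Converting back via $\sin R=|x|\,\Omega$ yields \eqref{eq:decaysol} with no dispersive input whatsoever. So your claim that ``$L^\infty_T(H^1\cap L^{p+1})$ does not embed in $L^\infty$ for $n\ge3$'' is precisely the point at which radiality should have entered, and the Strichartz bootstrap you propose (on a domain with moving boundary in $\mathbb R\times\mathbb S^n$, perturbed by a large potential) is both unnecessary and, as you yourself flag, an unproved technical result.

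A related misreading: the role of $N_0=\lfloor n/2\rfloor+1$ in the definition of $C_M$ is \emph{not} to run Sobolev embedding on $\mathbb S^n$ at the end. It is used only once, to solve the transformed equation \emph{locally} on a short strip $[0,\delta)\times\mathbb S^n$ and thereby extend $v$ slightly beyond $\mathscr I^+$; this produces a slice $\Sigma_T$ of length $\gtrsim\delta$ for every $T<\pi$, which is what keeps the factor $|I|^{-1}$ in the lemma above under control. After that, only the $H^1$ energy is used. Also, the condition needed here is $\beta>N_0$ (implied by $p>11/2$), not $p>n+3$; the latter enters only in the quasiradial Theorem~\ref{the:quasiradial}.
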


If the initial data are smoother then regularity propagates,
and in addition higher Sobolev norms remain
bounded as $t\to \infty$. In order to state the regularity result,
we introduce briefly the \emph{nonlinear compatibility conditions},
discussed in greater detail in 
Section \ref{sec:globalrad} (see Definition \ref{def:nlcc} and also
Definition \ref{def:lcc}). For the mixed problem on
$\mathbb{R}^{+}\times \Omega$
\begin{equation*}
  u_{tt}=\Delta u+f(u),
  \qquad
  u(0,x)=u_{0},
  \qquad
  u_{t}(0,x)=u_{1},
  \qquad
  u(t,\cdot)\vert_{\partial \Omega}=0
\end{equation*}
we define formally the sequence of functions $\psi_{j}$ as follows:
\begin{equation*}
  \psi_{0}=u(0,x)=u_{0},
  \quad
  \psi_{1}=u_{t}(0,x)=u_{1},
  \quad
  \psi_{j}=\partial_{t}^{j}u(0,x)=
  \Delta \psi_{j-2}+\partial_{t}^{j-2}(f(u))\vert_{t=0},
\end{equation*}
where in the definition of $\psi_{j}$ we set recursively
$\partial_{t}^{k}u(0,x)=\psi_{k}(x)$ for $0\le k\le j-2$.
Then we say that the data $(u_{0},u_{1},f(s))$ satisfy the
\emph{nonlinear compatibility conditions} of order $N\ge1$ if
\begin{equation}\label{eq:ccond}
  \psi_{j}\in H^{1}_{0}(\Omega)
  \quad\text{for}\quad 
  0\le j\le N.
\end{equation}
Note that if $f(s)$ vanishes at $s=0$ of sufficient order,
to satisfy the nonlinear compatibility conditions
it is sufficient to assume that the initial data
$u_{0},u_{1}$ belong to $H^{k}_{0}(\Omega)$
for $k$ large enough.

We can now state the higher regularity result for the
radial solutions:

\begin{theorem}[]\label{the:regul}
  Let  $n\ge3$, $p>N\ge1$, $p> \frac{2n}{n-2}$.
  Assume the radially symmetric data
  $u_{0}\in H^{N+1}(\Omega)$, $u_{1}\in H^{N}$  and
  $f(u)=|u|^{p-1}u$ satisfy the nonlinear compatibility conditions
  of order $N$ and condition \eqref{eq:defX}.

  Then the radial solution constructed in 
  Proposition \ref{pro:radial} belongs to
  $\in C ^{k}H^{N+1-k}\cap C^NH^{1}_{0}$
  for all $ 0\le k\le N+1$
  and satisfies (besides \eqref{eq:consE})
  the uniform bounds
  \begin{equation*}
    \|u\|_{L^{\infty}L^{2}}\le
    C(\|(u_{0},u_{1})\|_{M}),
  \end{equation*}
  \begin{equation}\label{eq:boundsHk}
    \|\nabla_{t,x}u\|_{Y^{\infty,2;k}}
    \le
    C
    \Bigl(\|(u_{0},u_{1})\|_{M},
      \|u_{0}\|_{H^{k}},\|u_{1}\|_{H^{k-1}}
    \Bigr),
    \qquad
    1\le k< p-1.
  \end{equation}
\end{theorem}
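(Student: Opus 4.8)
The plan is to bootstrap from the basic solution of Proposition~\ref{pro:radial} together with the decay estimate of Theorem~\ref{the:decayrad}, propagating regularity up the ladder of compatibility conditions and controlling the growth of higher-order energies via a Gronwall argument fed by the pointwise decay. First I would set up the standard energy-method machinery for the mixed Dirichlet problem on $\mathbb{R}^{+}\times\Omega$: since the nonlinear compatibility conditions of order $N$ hold, the functions $\psi_{j}=\partial_{t}^{j}u(0,x)$ lie in $H^{1}_{0}(\Omega)$ for $0\le j\le N$, which is exactly what is needed to differentiate the equation $u_{tt}=\Delta u+f(u)$ in time $k$ times and run the local existence/regularity theory in $\bigcap_{k}C^{k}H^{N+1-k}\cap C^{N}H^{1}_{0}$ on a maximal interval. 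The point is then to show this maximal interval is all of $\mathbb{R}^{+}$ and that \eqref{eq:boundsHk} holds globally.

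Next I would differentiate the equation $j$ times in $t$, obtaining $\partial_{t}^{j}u_{tt}=\Delta\partial_{t}^{j}u+\partial_{t}^{j}(f(u))$, and expand $\partial_{t}^{j}(f(u))$ by the Faà di Bruno / Leibniz formula into a sum of products of the form $f^{(m)}(u)\prod_{i}\partial_{t}^{\ell_{i}}u$ with $\sum\ell_{i}=j$ and $\ell_{i}\le j$. The key structural fact is $p>N$ so that $f(s)=|s|^{p-1}s$ is $C^{N}$ and the factors $f^{(m)}(u)$ are bounded by a power of $|u|$; here the uniform bound $\|u\|_{L^{\infty}}\lesssim C(\|u_{0}\|_{H^{1}},\|u_{1}\|_{L^{2}})$ from \eqref{eq:unibound} makes every $f^{(m)}(u)$ pointwise bounded (this is why the hypothesis is a \emph{radial} solution with finite energy). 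Testing the differentiated equation against $\partial_{t}^{j+1}u$ produces the standard energy identity whose right-hand side is $\int_{\Omega}\partial_{t}^{j}(f(u))\,\partial_{t}^{j+1}u\,dx$, and the nonlinear terms are estimated term by term: one factor is placed in $L^{2}$ to pair with $\partial_{t}^{j+1}u$, the highest-order derivative factor is absorbed into the energy of order $\le j$, and all intermediate factors are handled by Gagliardo--Nirenberg interpolation and the $n>2$, $p>\frac{2n}{n-2}$ condition (which gives enough room in the Sobolev exponents, exactly as in the subcritical local theory). Summing over $0\le j\le k-1$ gives a differential inequality $\frac{d}{dt}\mathcal{E}_{k}(t)\lesssim g(t)\,\mathcal{E}_{k}(t)$ for the total order-$k$ energy $\mathcal{E}_{k}\sim\|\nabla_{t,x}u\|_{Y^{\infty,2;k}}^{2}$, where $g(t)$ collects the lower-order-in-$k$ but possibly time-growing quantities.

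The main obstacle — and the place where Theorem~\ref{the:decayrad} is essential — is that a naive Gronwall gives only exponential growth, whereas \eqref{eq:boundsHk} asserts a \emph{uniform} bound. To close this I would feed the pointwise decay \eqref{eq:decaysol}, namely $|u(t,x)|\lesssim |x|^{1-n/2}\langle t+|x|\rangle^{-1/2}\langle t-|x|\rangle^{-1/2}$, into the estimate of the forcing term: since $f^{(m)}(u)$ is controlled by $|u|^{p-m}$ and $p-m\ge p-N>0$, the coefficient functions in front of the top-order factors decay like an integrable power of $t$ (one checks $\int_{0}^{\infty}\sup_{x}|u(t,x)|^{\alpha}\,dt<\infty$ once $\alpha$ times the net decay rate exceeds $1$, which holds because $p$ is large — this is the same mechanism by which the $p>n+3$ threshold is exploited elsewhere in the paper, and here only the weaker $p>\frac{2n}{n-2}$ is needed since no Strichartz estimate is required at this stage). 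Thus $g\in L^{1}(\mathbb{R}^{+})$ and Gronwall yields $\mathcal{E}_{k}(t)\le\mathcal{E}_{k}(0)\exp(\|g\|_{L^{1}})$, giving both global existence (the energy does not blow up, so the solution extends) and the stated uniform bound, with the constant depending only on $\|(u_{0},u_{1})\|_{M}$ (through the decay estimate) and $\|u_{0}\|_{H^{k}},\|u_{1}\|_{H^{k-1}}$ (through $\mathcal{E}_{k}(0)$). I would carry out the induction on $k$ from $1$ up to $N$, at each step using the already-established uniform bound on $\mathcal{E}_{k-1}$ to control the lower-order contributions to $g$, and I expect the only delicate bookkeeping to be the interpolation exponents in the intermediate factors, which I would organize exactly as in the proof of the corresponding subcritical result (cf.~\cite{ShatahStruwe98-a}, Chapter~5).
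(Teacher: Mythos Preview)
Your proposal is correct and follows essentially the same approach as the paper: identify the local solution from Lemma~\ref{lem:localfu} with the global bounded radial solution, then propagate higher regularity by induction on $k$, using the pointwise decay $\|u(t,\cdot)\|_{L^\infty}\lesssim\langle t\rangle^{-1}$ (a consequence of \eqref{eq:decaysol} since $|x|\ge 1$) to make the nonlinear forcing integrable in time. The paper's execution differs only cosmetically: it applies the integrated linear energy estimate \eqref{eq:EnEst} and bounds $\||u|^{p}\|_{Y^{1,2;k}_{T}}$ directly by the induction hypothesis (so no Gronwall is actually needed at step $k$; the integrability condition that appears is $p-k>1$, i.e.\ exactly the range $k<p-1$ in \eqref{eq:boundsHk}), and it treats the $L^\infty L^2$ bound separately via \eqref{eq:L2en} and H\"older--Sobolev, which is where the hypothesis $p>\frac{2n}{n-2}$ is used rather than in the $H^{k}$ step.
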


The main result of the paper is the following:

\begin{theorem}[]\label{the:quasiradial}
  Let $n\ge3$, $N\ge \frac 52n$,
  $p> n+3$, and $u_{0},u_{1}$ radial
  functions satisfying the assumptions in Theorem \ref{the:regul}.
  Then there exists 
  $\epsilon=\epsilon(u_{0},u_{1})>0$ such that the following holds. 

  Assume
  $v_{0}\in H^{N+1}(\Omega)$, 
  $v_{1}\in H^{N}(\Omega)$ and $f(u)=|u|^{p-1}u$
  satisfy the
  nonlinear compatibility conditions of order $N$
  and $\|u_{0}-v_{0}\|_{H^{N+1}}< \epsilon$,
  $\|u_{1}-v_{1}\|_{H^{N}}< \epsilon$.
  Then Problem \eqref{eq:pbm} with data $v_{0},v_{1}$ has a global
  solution $v\in C^{k}H^{N-k}\cap C^{N-1_{b}}H^{1}_{0}$,
  $0\le k\le N+1$.
\end{theorem}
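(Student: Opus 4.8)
The plan is to treat the nonradial solution $v$ as a perturbation of the radial solution $u$ and to run a bootstrap/continuity argument using the decay estimate \eqref{eq:decaysol} for $u$ together with a Strichartz estimate for the exterior linear wave equation with a time-dependent potential. Writing $v=u+w$, the difference $w$ solves a mixed problem on $\mathbb{R}^{+}\times\Omega$ with Dirichlet boundary condition, zero data replaced by $(v_0-u_0,v_1-u_1)$ (small in $H^{N+1}\times H^{N}$), and nonlinearity
\begin{equation*}
  \square w = -\bigl(|u+w|^{p-1}(u+w)-|u|^{p-1}u\bigr)
  = -V(t,x)\,w - \mathcal{N}(u,w),
\end{equation*}
where $V(t,x)=p|u(t,x)|^{p-1}$ is the \emph{large, time-dependent potential} linearizing the nonlinearity about $u$, and $\mathcal N$ collects the genuinely quadratic-and-higher terms in $w$. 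The point of the decomposition is that $V$ is not small, but by Theorem \ref{the:decayrad} it decays: $|V(t,x)|\lesssim |x|^{(1-\frac n2)(p-1)}\langle t+|x|\rangle^{-\frac{p-1}2}\langle t-|x|\rangle^{-\frac{p-1}2}$, and since $p>n+3$ this decay is strong enough (in particular integrable in $t$ along light cones and with enough room in the radial weight) to be absorbed by the linear exterior flow. This is exactly the regime in which one expects a Strichartz estimate for $\square + V$ to hold with the same exponents as the free exterior wave equation.

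The key steps, in order, are: (i) Record the linear theory for the exterior wave equation $\square z + V z = F$, $z|_{\partial\Omega}=0$, with $V$ satisfying the above pointwise decay: establish energy bounds and a Strichartz estimate $\|z\|_{S}\lesssim \|z(0)\|_{H^{1}}+\|\partial_t z(0)\|_{L^2}+\|F\|_{N}$ on some Strichartz space $S$ (and its higher-regularity analogues $S^k$ obtained by commuting derivatives, which introduce lower-order terms controlled by the decay of $\nabla_{t,x}^{\le k}u$ furnished by Theorem \ref{the:regul} and the choice $N\ge\frac52 n$). For the exterior domain one uses the known local smoothing / Strichartz estimates for the Dirichlet wave equation outside a convex obstacle, and treats $V$ perturbatively using its integrable-in-time decay — this is where the Penrose-transform reduction from the introduction is invoked to get the potential's decay in the first place. (ii) Set up the iteration map $\Phi: w\mapsto \tilde w$ where $\tilde w$ solves $\square \tilde w + V\tilde w = -\mathcal N(u,w)$ with the given small data, on the ball $\{\|w\|_{S^{k}}\le \delta\}$ for suitable $k$ (dictated by $N$) and small $\delta$. (iii) Estimate the nonlinear remainder: since $\mathcal N(u,w)$ is at least quadratic in $w$, with coefficients that are polynomial in $u$ hence bounded and decaying, one gets $\|\mathcal N(u,w)\|_{N}\lesssim \|w\|_{S^k}^{2}+\|w\|_{S^k}^{p}$ (using Sobolev embedding in the Strichartz spaces and $p>n+3$ to make sense of the top power for $H^{N}$-regular functions via Moser-type estimates — note $p>N$ is \emph{not} assumed here so one needs the fractional-power Moser estimate valid for non-integer $p$, which requires $N<p-1$-type conditions; the hypotheses $N\ge\frac52 n$ and $p>n+3$ are calibrated to make the relevant $k$ satisfy this). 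Then $\Phi$ is a contraction on the small ball for $\epsilon$ small enough, giving a global-in-time fixed point $w\in S^k$, and in particular $w\in C^{j}H^{N-j}$. (iv) Upgrade $w$ to the claimed regularity class $C^k H^{N-k}\cap C^{N-1_b}H^1_0$ by propagating the nonlinear compatibility conditions of order $N$ (which are assumed for $v$) through the energy method as in Theorem \ref{the:regul}, and conclude $v=u+w$ has the stated regularity globally.

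The main obstacle is step (i): proving the Strichartz estimate for the exterior problem $\square+V$ with a \emph{large} (not small in any scale-invariant norm) potential. The usual perturbative absorption of a potential into free Strichartz requires $V\in L^1_t L^\infty_x$ or a similar smallness, and here $V=p|u|^{p-1}$ is large near the obstacle at $t=0$; one cannot simply Neumann-iterate. The resolution should be exactly the strategy advertised in the abstract: transport the whole problem via the Penrose transform to a mixed problem with moving boundary on (a portion of) $\mathbb{S}^n$, where the equation becomes a wave equation on a compact-in-space, finite-time region with a \emph{bounded} (conformally rescaled) potential, prove Strichartz there by finite-speed-of-propagation localization plus the known estimates on manifolds with boundary, and pull back — the weights $|x|^{1-n/2}$ and the $\langle t\pm|x|\rangle$ factors in \eqref{eq:decaysol} are precisely the conformal factors that make the rescaled potential bounded and the rescaled time interval finite. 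Making the pull-back compatible with the Dirichlet condition on $\partial\Omega$ (which under Penrose maps to a timelike boundary in the compactified picture) and checking that the moving-boundary region is still one where the boundary wave estimates apply (non-trapping/convexity is preserved) is the technical heart. A secondary difficulty is the Moser estimate for the non-smooth nonlinearity $|u|^{p-1}u$ at high Sobolev regularity $H^N$ when $p$ is not an integer and $p>N$ is not available — this forces careful bookkeeping of exactly how many derivatives the Strichartz space $S^k$ must carry, and is the reason for the somewhat large threshold $N\ge\frac52 n$.
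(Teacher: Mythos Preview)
Your overall architecture---write $v=u+w$, linearize to get $\square w+V w=\mathcal N(u,w)$ with $V=p|u|^{p-1}$, prove an energy--Strichartz estimate for $\square+V$, then close a bootstrap $M(T)\lesssim \epsilon+M(T)^{2}+M(T)^{p}$---is exactly the paper's strategy, and your bookkeeping of the nonlinear remainder is in the right spirit. The misconception is in what you call ``the main obstacle''. You assert that the perturbative Strichartz estimate for $\square+V$ fails because $V$ is large and ``one cannot simply Neumann-iterate'', and you propose to rescue it by a second application of the Penrose transform to the $w$-problem. This is both unnecessary and would be substantially harder than the actual argument.

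The point you are missing is that \emph{finiteness}, not smallness, of $\|V\|_{L^{1}_{t}L^{n}_{x}}$ (and of the analogous norms of $\partial_{t}^{j}\partial_{x}^{\alpha}V$ up to order $m$) suffices: one treats $Vw$ as a forcing term in the free exterior problem, applies the unperturbed energy/Strichartz estimate of Propositions~\ref{pro:linear}--\ref{pro:strichder}, and closes by Gronwall's lemma rather than by a Neumann series. The resulting constant depends on $\exp(\|V\|_{Y^{1,n;m}})$, which is large but finite. This is Proposition~\ref{pro:enerstr}. The required integrability of $V$ and its derivatives follows from $\|u(t)\|_{L^{\infty}}\lesssim\langle t\rangle^{-1}$ (a consequence of \eqref{eq:decaysol}) together with the uniform $H^{k}$ bounds \eqref{eq:boundsHk}: one gets $\|\partial_{t}^{j}V(t)\|_{W^{n,1}}\lesssim\langle t\rangle^{j+3-p}$, integrable for $p>m+4$. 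Thus the Penrose transform enters the proof \emph{only} through Theorem~\ref{the:decayrad}, to produce the decay of the radial background; it is not used again for the perturbation $w$. Your proposed route---pulling the $w$-equation to $\mathbb S^{n}$ and proving Strichartz on a moving-boundary region there---would require a genuinely new linear theory and is not what the paper does.
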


In other words,
nonradial perturbations of large radial initial data in a high 
Sobolev norm do
give rise to global smooth quasiradial solutions. 
Note that these solutions are
unique in the class of locally bounded global solutions, but 
one can not in principle exclude the existence of other energy class solutions. However, one can prove a weak--strong
uniqueness result, which
implies in particular that the solution constructed
in Theorem \ref{the:quasiradial} is the unique energy class
solution satisfying an energy inequality:

\begin{theorem}[]\label{the:wsuniq}
  Suppose all the assumptions in Theorem \ref{the:quasiradial}
  are satisfied. Let $I$ be an open interval containing
  $[0,T]$ and
  $\underline{v}\in C(I;H^{1}_{0})\cap C^{1}(I;L^{2})
    \cap L^{\infty}(I;L^{p+1}(\Omega))$
  a distributional solution for $0\le t\le T$
  to Problem \eqref{eq:pbm} with the same initial data
  as $v$,
  which satisfies an energy inequality 
  $E(\underline{v}(t))\le E(\underline{v}(0))$
  (see \eqref{eq:consE}). Then we have $\underline{v}(t)=v(t)$
  for $0\le t\le T$.
\end{theorem}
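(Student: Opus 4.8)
The plan is to prove Theorem~\ref{the:wsuniq} by a standard weak–strong uniqueness argument adapted to the exterior domain and the supercritical nonlinearity. Set $w=\underline{v}-v$, where $v$ is the strong, locally bounded solution from Theorem~\ref{the:quasiradial}; then $w$ solves $\square w = -(f(\underline v)-f(v))$ with zero Cauchy data and Dirichlet boundary condition. The goal is to show $w\equiv 0$ on $[0,T]$ by a Gronwall estimate on a suitable energy-type functional. The difficulty is that $\underline v$ only lies in the energy class $C(I;H^1_0)\cap C^1(I;L^2)\cap L^\infty(I;L^{p+1})$, so we cannot simply compute $\frac{d}{dt}\|\partial_{t,x}w\|_{L^2}^2$ using the equation for $\underline v$: the term $\int f(\underline v)\,\partial_t v$ need not even make sense if we test against $v$'s derivatives naively. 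The customary fix is to use the \emph{relative energy} (Dafermos-type) functional
\[
  \mathcal E(t)=\tfrac12\|\partial_t w(t)\|_{L^2}^2+\tfrac12\|\nabla w(t)\|_{L^2}^2
  +\int_\Omega\Bigl(F(\underline v)-F(v)-f(v)\,w\Bigr)dx,
\]
where $F'=f$, $F(s)=\tfrac1{p+1}|s|^{p+1}$, and to exploit the convexity of $F$ so that the last integrand is nonnegative. Here $\mathcal E(t)\ge \tfrac12\|\partial_{t,x}w(t)\|_{L^2}^2$.

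The key steps, in order, are as follows. First I would establish the \emph{relative energy identity/inequality}: combining the energy inequality satisfied by $\underline v$ with the exact energy identity for $v$ (which holds by Theorem~\ref{the:quasiradial} / Proposition~\ref{pro:radial}, since $v$ is smooth enough) and the weak formulation of both equations tested against $\partial_t v$ and $\partial_t w$ respectively, one obtains
\[
  \mathcal E(t)\le \mathcal E(0)-\int_0^t\!\!\int_\Omega \bigl(f(\underline v)-f(v)-f'(v)\,w\bigr)\,\partial_t v\;dx\,ds,
\]
with $\mathcal E(0)=0$ because $w(0)=\partial_t w(0)=0$ and $F(\underline v(0))-F(v(0))-f(v(0))w(0)=0$. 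The point of subtracting $f'(v)w$ inside is that the dangerous highest-order term cancels, leaving a genuinely quadratic remainder in $w$. Second, I would bound the remainder pointwise: by Taylor's theorem, $|f(\underline v)-f(v)-f'(v)w|\le C\,(|v|+|w|)^{p-2}\,|w|^2$, so the integrand is controlled by $C\,(|v|^{p-2}+|w|^{p-2})\,|w|^2\,|\partial_t v|$. Third — and this is where the hypotheses on $v$ are used decisively — since $v$ is the quasiradial solution, it is globally bounded on $\mathbb R^+\times\Omega$ (the uniform bound \eqref{eq:unibound}, or rather its strengthened version coming from \eqref{eq:decaysol} and Theorem~\ref{the:regul}) and $\partial_t v\in L^\infty([0,T]\times\Omega)$; hence $|v|^{p-2}|\partial_t v|\le C$ on $[0,T]\times\Omega$, and this part contributes $\le C\int_0^t\|w(s)\|_{L^2}^2\,ds\le C\int_0^t\mathcal E(s)\,ds$.

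The remaining, genuinely supercritical, term is $\int_\Omega |w|^{p-2}|w|^2|\partial_t v|\,dx\le \|\partial_t v\|_{L^\infty}\int_\Omega|w|^p\,dx$. This is the main obstacle: in the energy class we only control $\|w\|_{L^{p+1}}$, and $\int|w|^p$ is a lower power, so by interpolation (or simply Hölder on the region where $|w|\le1$ versus using $L^{p+1}$ control where $|w|$ is large, together with the bound $\|w\|_{L^{p+1}}^{p+1}\le C\bigl(E(\underline v(0))+E(v(0))\bigr)$ which is finite and, crucially, \emph{small} once $\epsilon$ is small) one gets $\int_\Omega|w|^p\,dx\le C\,\|w\|_{L^{p+1}}^{p-1}\cdot\|w\|_{L^{p+1}} $ — but this is not obviously absorbable because it is superlinear in $w$. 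The resolution is to treat it as a nonlinear Gronwall inequality: write $\int|w|^p\le \delta^{-(p-1)}\|w\|_{L^2}^2\cdot(\text{local }L^\infty\text{ of }w) + \dots$ is too crude; instead I would use that on $[0,T]$ the relative energy $\mathcal E(t)$ controls both $\|w\|_{L^2}^2$ and, via $F(\underline v)-F(v)-f(v)w\gtrsim |w|^{p+1}$ on the set $\{|w|\gg|v|\}$ by convexity of $F$ (a pointwise inequality for power nonlinearities), also $\int_{\{|w|\ge 2\|v\|_{L^\infty}\}}|w|^{p+1}\,dx$; thus $\int_\Omega|w|^p\,dx\le C_T\,\mathcal E(t)+C_T\,\mathcal E(t)^{\frac{p}{p+1}}\le C_T\,\mathcal E(t)$ whenever $\mathcal E(t)\le1$, which holds for $t$ small by continuity since $\mathcal E(0)=0$. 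Plugging back gives $\mathcal E(t)\le C_T\int_0^t\mathcal E(s)\,ds$ on a maximal subinterval where $\mathcal E\le1$, whence $\mathcal E\equiv0$ there by Gronwall, and a continuation/bootstrap argument extends this to all of $[0,T]$. Therefore $w\equiv0$, i.e.\ $\underline v=v$ on $[0,T]$. I expect the delicate points to be: (i) justifying the relative energy inequality rigorously at the level of distributional solutions (this requires a mollification/density argument and careful use of the fact that $v$ is an admissible test function because of its regularity and boundedness, together with the Dirichlet boundary condition ensuring no boundary terms appear in the integration by parts), and (ii) the pointwise convexity inequality $F(b)-F(a)-f(a)(b-a)\ge c\,|b-a|^{p+1}$ valid when $|b-a|\ge C|a|$, which is what converts the problematic $L^p$ norm of $w$ into something dominated by the relative energy.
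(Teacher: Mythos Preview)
Your overall strategy is exactly the relative energy (Dafermos--Struwe) method the paper uses: your functional $\mathcal E(t)$ coincides with the quantity $A(t)$ in the paper's Theorem~5.1, and the paper derives your relative energy inequality by writing $E(\underline v)=E(v)+A(t)+B(t)$, using the energy inequality for $\underline v$ and conservation for $v$, and computing $B(0)-B(t)$ via a carefully justified distributional identity (your point (i)). So the architecture is right.

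There are, however, two concrete errors in your closing estimates. First, the step $\mathcal E(t)^{p/(p+1)}\le C_T\,\mathcal E(t)$ for $\mathcal E(t)\le1$ is backwards: since $p/(p+1)<1$, one has $\mathcal E^{p/(p+1)}\ge\mathcal E$ for small $\mathcal E$, and a Gronwall inequality with a sublinear right-hand side $\mathcal E(t)\le C\int_0^t\mathcal E(s)^{p/(p+1)}ds$ does \emph{not} force $\mathcal E\equiv0$ (the Osgood condition fails; $y'=y^{p/(p+1)}$, $y(0)=0$ has nontrivial solutions). Second, the claim $\|w(s)\|_{L^2}^2\le C\,\mathcal E(s)$ is unjustified on the exterior domain, where no Poincar\'e inequality is available. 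Both issues disappear if you follow the paper more closely: use the pointwise bound $|w|^p\le|w|^2+|w|^{p+1}$ (split on $|w|\lessgtr1$), work throughout with the \emph{nonlinear} energy $E(w)=\frac12\|\partial_{t,x}w\|_{L^2}^2+\frac{1}{p+1}\|w\|_{L^{p+1}}^{p+1}$ so that $\int|w|^{p+1}$ is controlled linearly (the paper proves $A(t)\ge 2^{-p}E(w(t))-C\|w\|_{L^2}^2$ via $F(v+w)-F(v)-f(v)w\ge\frac{2^{2-p}}{p+1}|w|^{p+1}-\frac p2|v|^{p-1}|w|^2$), and handle $\|w\|_{L^2}^2$ by the elementary time-integration estimate $\|w(t)\|_{L^2}^2\le 2T\int_0^t\|\partial_tw\|_{L^2}^2\,ds+2\|w(0)\|_{L^2}^2$. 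With these fixes the Gronwall inequality is genuinely linear and the argument closes.
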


\begin{remark}[]\label{rem:schrball}
  Similar results can be proved for other dispersive equations,
  notably for the nonlinear Schr\"{o}dinger equation.
  This will be the topic of further work in preparation.
\end{remark}

The proof of Theorem \ref{the:quasiradial} is based on 
perturbing the radial solution $u(t,x)$
with a small term $w(t,x)$ satisfying the equation
\begin{equation*}
  \square w+|u+w|^{p-1}(u+w)-|u|^{p-1}u=0,
\end{equation*}
which can be written in the form
\begin{equation*}
  \square w+V(t,x)w=w^{2}\cdot F[u,w],
  \qquad
  V(t,x)=p|u|^{p-1}.
\end{equation*}
To solve the last equation, we prove an energy--Strichartz
estimate for the exterior problem with potential
\begin{equation*}
  \square u+V(t,x)u=F,
  \qquad
  u(0,x)=u_{0}(x),
  \qquad
  u_{t}(0,x)=u_{1}(x),
  \qquad
  u(t,\cdot)\vert_{\partial \Omega}=0
\end{equation*}
as an application of the results of \cite{SmithSogge00-a},
\cite{Metcalfe04-a}, \cite{Burq03-a};
note that this part of the argument holds also on arbitrary
non trapping exterior domains. 
The Strichartz estimate for the wave equation with
potential is proved by
reduction to the constant coefficient case, which
is possible since
from the previous results we know that the potential
$V$ has rapid decay in $(t,x)$.
Finally, Theorem \ref{the:wsuniq} is
an adaptation of a weak--strong uniqueness result
due to M.~Struwe \cite{Struwe06}.

The plan of the paper is the following. The linear theory and
the perturbed energy--Strichartz estimates are developed in
Section \ref{sec:line_theo}. 
In Section \ref{sec:globalrad} 
the global radial solution is constructed and
its regularity and decay properties are studied.
In the final Sections \ref{sec:quasiradial},
\ref{sec:weak_stro_uniq}
we prove the main results
Theorems \ref{the:quasiradial} and
Theorem \ref{the:wsuniq}.

\section{Linear theory}\label{sec:line_theo}

Denote by $-\Delta_{D}$ the Dirichlet Laplacian, that is the
nonnegative selfadjoint operator with domain 
$H^{2}(\Omega)\cap H^{1}_{0}(\Omega)$, and by $\Lambda$ its 
nonnegative selfadjoint square root
\begin{equation*}
  \Lambda=(-\Delta_{D})^{1/2},
  \qquad
  D(\Lambda)=H^{1}_{0}(\Omega).
\end{equation*}
We have $\Lambda^{2k}=(-\Delta_{D})^{k}$ for integer $k\ge0$,
and 
\begin{equation*}
  D(\Lambda^{2k})=D(\Delta_{D}^{k})=
  \{f\in
  H^{2k}(\Omega):
  f,\Delta f,\dots ,\Delta^{k-1}f\in H^{1}_{0}(\Omega)
  \},
\end{equation*}
\begin{equation*}
  D(\Lambda^{2k+1})=
  \{f\in
  H^{2k+1}(\Omega):
  f,\Delta f,\dots ,\Delta^{k}f\in H^{1}_{0}(\Omega)
  \}
\end{equation*}
so that
\begin{equation*}
  \textstyle
  D(\Lambda^{k})=
  \{f\in H^{k}(\Omega) \colon \Delta^{j}f\in H^{1}_{0}(\Omega),
  \ 0\le 2j\le k-1\}.
\end{equation*}
We shall make repeated use of the equivalence
\begin{equation*}
  \|\nabla_{x}g\|_{L^{2}(\Omega)}
  =
  \|\Lambda g\|_{L^{2}(\Omega)},
\end{equation*}
valid for $g\in D(\Lambda)=H^{1}_{0}(\Omega)$.
The solution of the special mixed problem
\begin{equation*}
  \square u=0,\qquad u(0,x)=0, \qquad u_{t}(0,x)=u_{1},
  \qquad
  u(t,\cdot)\vert_{\partial \Omega}=0
\end{equation*}
with $u_{1}\in H^{1}_{0}(\Omega)$ can be represented in the form
\begin{equation}\label{eq:defSt}
  S(t)u_{1}:=\Lambda^{-1}\sin(t \Lambda)u_{1}
\end{equation}
Then the solution of the full \emph{linear} mixed problem
on $\mathbb{R}^{+}\times \Omega$
\begin{equation}\label{eq:linear}
  \square u=F(t,x)
  \qquad
  u(0,x)=u_{0}(x),
  \qquad
  u_{t}(0,x)=u_{1}(x),
  \qquad
  u(t,\cdot)\vert_{\partial \Omega}=0
\end{equation}
takes the form
\begin{equation}\label{eq:fullsol}
  \textstyle
  u=S(t)u_{1}+\partial_{t}S(t)u_{0}+
  \int_{0}^{t}S(t-s)F(s)ds,
\end{equation}
where
\begin{equation*}
  \partial_{t}S(t)u_{0}=\cos(t \Lambda)u_{0}.
\end{equation*}

We shall be concerned with several global in time
estimates of $S(t)$ and of the solution to
\eqref{eq:linear}.
We work for positive times $t>0$ only, but it is clear
that all results are time--reversible.
Directly from the spectral theory one gets
\begin{equation}\label{eq:ener1}
  \|\nabla_{x} S(t)g\|_{L^{2}(\Omega)}
  \le\|g\|_{L^{2}(\Omega)},
  \qquad
  \|\partial_{t} S(t)g\|_{L^{2}(\Omega)}\le\|g\|_{L^{2}(\Omega)},
\end{equation}
\begin{equation}\label{eq:ener2}
  \|S(t)g\|_{L^{2}(\Omega)}\le t\|g\|_{L^{2}(\Omega)},
  \qquad
  \|S(t)g\|_{L^{2}(\Omega)}\le
  \|\Lambda^{-1}g\|_{L^{2}(\Omega)}\lesssim
  \|g\|_{L^{\frac{2n}{n+2}}(\Omega)}.
\end{equation}
As a consequence one gets
the basic energy estimate for solutions of \eqref{eq:linear}:
\begin{equation}\label{eq:en0}
  \|\nabla_{t,x}u\|_{L^{\infty}(0,T;L^{2}(\Omega))}
  \lesssim
  \|\nabla_{x}u_{0}\|_{L^{2}}+
  \|u_{1}\|_{L^{2}}+
  \|F\|_{L^{1}(0,T;L^{2}(\Omega))}
\end{equation}
valid for all $T>0$, with a constant independent of $T$.

Higher regularity results require compatibility conditions.
Given the data $(u_{0},u_{1},F)$ we define
recursively the sequence of functions $h_{j}$ as follows:
\begin{equation}\label{eq:lincc}
  h_{0}=u_{0},\qquad
  h_{1}=u_{1},\qquad
  h_{j}=\partial_{t}^{j}u(0,x)=
  \Delta h_{j-2}+\partial_{t}^{j-2}F(0,x),
  \qquad
  j\ge2.
\end{equation}
The function $h_{j}$ is obtained by applying 
$\partial_{t}^{j-2}$ to the equation $u_{tt}=\Delta u+F$.

\begin{definition}[Linear compatibility conditions]\label{def:lcc}
  We say that the data $(u_{0},u_{1},F)$ satisfy the
  \emph{linear compatibility conditions of order $N\ge1$} if
  $(u_{0}, u_{1})\in H^{N+1}(\Omega)\times H^{N}(\Omega)$,
  $F\in C^{k}H^{N-k}(\Omega)$ for $0\le k\le N$,
  and
  \begin{equation*}
    h_{j}\in H^{1}_{0}(\Omega)
    \quad\text{for}\quad 
    0\le j\le N.
  \end{equation*}
\end{definition}

To formulate estimates of $u$ in a compact format
we introduce a few notations.
We write for short for any interval $I \subseteq\mathbb{R}$
and $T\ge0$
\begin{equation*}
  L^{p}_{I}L^{q}=L^{p}(I;L^{q}(\Omega)),
  \qquad
  L^{p}_{T}L^{q}=L^{p}_{[0,T]}L^{q},
  \qquad
  L^{p}L^{q}=L^{p}_{[0,\infty)}L^{q}.
\end{equation*}
Moreover we denote the $L^{p}L^{q}$ norm of all spacetime 
derivatives up to the order $N$ by
\begin{equation}\label{eq:Ypq}
  \textstyle
  \|u\|_{Y_{I}^{p,q;N}}=
  \sum_{j+|\alpha|\le N}
  \|\partial^{j}_{t}\partial^{\alpha}_{x}u\|
  _{L^{p}(I;L^{q}(\Omega))}.
\end{equation}
When $I=[0,T]$ or $I=[0,+\infty)$ we write also
\begin{equation*}
  Y_{T}^{q,r;N}=Y_{[0,T]}^{q,r;N}
  \qquad
  Y^{q,r;N}=Y_{[0,\infty)}^{q,r;N}.
\end{equation*}
The following result is standard, and valid for
general domains $\Omega$ with, say, $C^{1}$ compact boundary.
We use the inequality 
$\|\Lambda^{-1}g\|_{L^{2}}\lesssim\|g\|_{L^{\frac{2n}{n+2}}}$
in the formulation of \eqref{eq:L2en}.

\begin{proposition}[]\label{pro:linear}
  Let $N\ge1$, and assume
  $u_{0}\in H^{N+1}(\Omega)$, $u_{1}\in H^{N}(\Omega)$
  and $F\in C^{k}H^{N-k}(\Omega)$ for $0\le k\le N$ satisfy 
  the linear compatibility conditions of order $N$. 
  Then Problem \eqref{eq:linear} has a unique solution
  belonging to
  \begin{equation*}
    u\in C^{k}H^{N+1-k}\cap C^N H^{1}_{0}
    \quad\text{for all}\quad 0\le k\le N+1.
  \end{equation*}
  The solution satisfies for all $T>0$ the energy estimates
  \begin{equation}\label{eq:L2en}
    \|u\|_{L^{\infty}_{T}L^{2}}
    \lesssim
    \|u_{0}\|_{L^{2}}+
    \|u_{1}\|_{L^{\frac{2n}{n+2}}}+
    \|F\|_{L^{1}_{T}L^{\frac{2n}{n+2}}},
  \end{equation}
  \begin{equation}\label{eq:EnEst}
    \textstyle
    \|\nabla_{t,x} u\|_{Y^{\infty,2;N}_{T}}
    \lesssim
    \|u_{0}\|_{H^{N+1}}
    +
    \|u_{1}\|_{H^{N}}
    +
    \|F\|_{Y^{1,2;N}_{T}}
  \end{equation}
  with a constant independent of $T$.
\end{proposition}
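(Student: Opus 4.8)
The plan is standard and proceeds by first handling the energy class and then bootstrapping regularity by induction on $N$. Since $N\ge1$, the compatibility conditions already force $u_{0}=h_{0}$ and $u_{1}=h_{1}$ into $H^{1}_{0}(\Omega)$, so the representation formula \eqref{eq:fullsol} (with $S(t)$ as in \eqref{eq:defSt}) defines a function $u\in CH^{1}_{0}\cap C^{1}L^{2}$ solving \eqref{eq:linear}; this is just the spectral calculus together with the mapping properties \eqref{eq:ener1}--\eqref{eq:ener2}. Estimate \eqref{eq:L2en} then follows directly from \eqref{eq:fullsol} by bounding $\cos(t\Lambda)u_{0}$ in $L^{2}$ by $\|u_{0}\|_{L^{2}}$ and using $\|S(t)g\|_{L^{2}}\le\|\Lambda^{-1}g\|_{L^{2}}\lesssim\|g\|_{L^{2n/(n+2)}}$ on $u_{1}$ and, under the Duhamel integral, on $F(s)$. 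Uniqueness is settled at the same level: the difference of two solutions in the asserted class lies in $CH^{1}_{0}\cap C^{1}L^{2}$ and solves \eqref{eq:linear} with zero data, hence vanishes by the a priori energy estimate \eqref{eq:en0}.

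For the regularity $u\in C^{k}H^{N+1-k}\cap C^{N}H^{1}_{0}$ and the bound \eqref{eq:EnEst} I would induct on $N$, the base being the energy-class statement just described. In the inductive step, set $v:=\partial_{t}u$; differentiating \eqref{eq:linear} in $t$, $v$ solves a mixed problem of the same form with forcing $\partial_{t}F$, initial position $u_{1}$ and initial velocity $\Delta u_{0}+F(0,\cdot)$. Two things must be checked: that these data have the regularity required at level $N-1$, which holds since $u_{1}\in H^{N}$, $\Delta u_{0}\in H^{N-1}$ (from $u_{0}\in H^{N+1}$), $F(0,\cdot)\in H^{N}$ and $\partial_{t}F\in C^{k}H^{N-1-k}$ for $0\le k\le N-1$; and that the triple $(u_{1},\,\Delta u_{0}+F(0,\cdot),\,\partial_{t}F)$ satisfies the linear compatibility conditions of order $N-1$. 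The latter holds because the sequence \eqref{eq:lincc} attached to $v$ is exactly $(h_{j})_{j\ge1}$ shifted by one index, so its first $N$ terms lying in $H^{1}_{0}$ is precisely the order-$N$ compatibility of the original data. The inductive hypothesis gives $v=\partial_{t}u\in C^{k}H^{N-k}\cap C^{N-1}H^{1}_{0}$ with \eqref{eq:EnEst} at level $N-1$; in particular $\partial_{t}^{2}u\in CH^{N-1}$. The top-order \emph{spatial} regularity is then recovered from the elliptic identity $\Delta u=\partial_{t}^{2}u-F$: the right-hand side is in $CH^{N-1}$ and $u(t)\in H^{1}_{0}(\Omega)$ with $\partial\Omega=\mathbb{S}^{n-1}$ smooth, so elliptic regularity for the exterior Dirichlet problem gives $u\in CH^{N+1}$ with $\|u(t)\|_{H^{N+1}}\lesssim\|u(t)\|_{L^{2}}+\|\Delta u(t)\|_{H^{N-1}}$; combining with \eqref{eq:L2en} and the $v$-estimate yields \eqref{eq:EnEst} at level $N$ and, with the $C^{N}H^{1}_{0}$ regularity of $v$ and $u\in CH^{1}_{0}$, the whole continuity scale.

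I expect the genuinely delicate point to be that the forcing $F$ need not satisfy the Dirichlet condition, so one cannot commute spatial derivatives through the functional calculus in the Duhamel term; routing the highest spatial derivatives through $\Delta u=\partial_{t}^{2}u-F$ and elliptic regularity on the exterior domain, rather than through $S(t)$, is precisely the device that circumvents this. The secondary nuisance is keeping every constant independent of $T$: \eqref{eq:en0} is already $T$-uniform, and the only terms that look $T$-dependent are the traces $\partial_{t}^{j}F(0,\cdot)$ entering the successive initial velocities, which are controlled $T$-uniformly via the identity $\partial_{t}^{j}F(0,\cdot)=\partial_{t}^{j}F(t,\cdot)-\int_{0}^{t}\partial_{s}^{j+1}F(s,\cdot)\,ds$ followed by an integration in $t$, bounding them by $\|F\|_{Y^{1,2;N}_{T}}$. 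One should finally verify that all derivatives are continuous, not merely bounded, in $t$ with values in the stated spaces, which is automatic from the strong continuity of $\cos(t\Lambda)$ and $S(t)$ and the continuity of the elliptic solution operator.
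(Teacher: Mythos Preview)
The paper does not prove this proposition; it declares the result ``standard, and valid for general domains $\Omega$ with, say, $C^{1}$ compact boundary'' and moves on. Your argument is correct and is the standard one --- the recursive device of differentiating in $t$ and then recovering the top spatial regularity from $\Delta u=\partial_{t}^{2}u-F$ via elliptic estimates on the exterior Dirichlet problem is exactly what the paper itself uses in the proof sketch of the next Proposition~\ref{pro:strichder}. One small slip: in the elliptic step you appeal to \eqref{eq:L2en} for the lower-order term $\|u(t)\|_{L^{2}}$, but \eqref{eq:L2en} carries the extra hypothesis $u_{1},F\in L^{2n/(n+2)}$, which is not assumed for \eqref{eq:EnEst}. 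This is easily repaired: for the regularity claim $u\in CH^{N+1}$ the non-uniform bound $\|S(t)g\|_{L^{2}}\le t\|g\|_{L^{2}}$ from \eqref{eq:ener2} already gives $\|u(t)\|_{L^{2}}<\infty$ for each fixed $t$, and for the $T$-uniform estimate \eqref{eq:EnEst} only $\nabla_{t,x}u$ appears, so the elliptic estimate in gradient form $\|D^{2}u\|_{H^{N-1}}\lesssim\|\Delta u\|_{H^{N-1}}+\|\nabla u\|_{L^{2}}$ (for $u\in H^{1}_{0}$) together with \eqref{eq:en0} closes the induction without any $L^{2}$ bound on $u$ itself.
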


We next recall Strichartz estimates for the exterior
problem, following \cite{SmithSogge00-a},
\cite{Metcalfe04-a}, \cite{Burq03-a}.
These estimates are valid on the exterior of any strictly 
convex obstacle with smooth boundary in $\mathbb{R}^{n}$,
$n\ge2$. With our notations one has
\begin{equation}\label{eq:strich}
  \textstyle
  \|S(t)g\|_{L^{q}L^{r}}\lesssim
  \|g\|_{\dot H^{s}},
  \qquad
  s=\frac 12-\frac 1r+\frac 1q
\end{equation}
provided $n\ge3$ and
\begin{equation}\label{eq:strichad}
  \textstyle
  \frac 2q+\frac{n-1}{r}=\frac{n-1}{2},
  \qquad
  2<q\le \infty,
  \qquad
  2\le r<\frac{2(n-1)}{n-3}.
\end{equation}
A couple $(q,r)$ as in \eqref{eq:strichad} is called
\emph{admissible}; note that the endpoint
$(q,r)=(2,\frac{2(n-1)}{n-3})$ is not included
and it is not known if the estimate holds also in this case.

One can further extend the range of indices by combining 
\eqref{eq:strich} with Sobolev embedding. We
shall focus on the following special case:
\begin{equation}\label{eq:strsob}
  \|S(t)g\|_{L^{q}L^{r}}\lesssim\|g\|_{\dot H^{1}}
\end{equation}
valid for $n\ge3$ and for couples of indices of the form
\begin{equation}\label{eq:strsubadm}
  \textstyle
  q=\frac 2 \delta,
  \qquad
  r=\frac{2n}{n-2-\delta},
  \qquad
  0\le \delta<1.
\end{equation}
Then we have:

\begin{proposition}[]\label{pro:strich}
  The solution $u$ to \eqref{eq:linear} saitisfies,
  for any interval $I$ containing 0, the
  Sobolev--Strichartz estimate
  \begin{equation}\label{eq:sobstr}
    \|u\|_{L^{q}_{I}L^{r}}\lesssim
    \|u_{0}\|_{\dot H^{1}}+
    \|u_{1}\|_{L^{2}}+
    \|F\|_{L^{1}_{I}L^{2}}
  \end{equation}
  provided $n\ge3$ and $(q,r)$ satisfy \eqref{eq:strichad}.
\end{proposition}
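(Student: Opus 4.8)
The plan is to read off \eqref{eq:sobstr} from the Duhamel representation \eqref{eq:fullsol} together with the homogeneous Strichartz estimates \eqref{eq:strich}--\eqref{eq:strsob}, the inhomogeneous contribution being absorbed by a soft argument. Since the $L^q_IL^r$ norm only decreases when $I$ shrinks, and since the wave propagators and the estimates \eqref{eq:strich}--\eqref{eq:strsob} are even or odd in $t$, it is enough to prove the bound for $I=[0,+\infty)$; the case $I=(-\infty,0]$ then follows by the reflection $t\mapsto-t$ (which changes $u_1$ into $-u_1$), and an arbitrary interval $I\ni0$ by splitting at $0$. Accordingly decompose
\[
  u=u^{\mathrm{hom}}+u^{\mathrm{Duh}},\qquad
  u^{\mathrm{hom}}=\partial_tS(t)u_0+S(t)u_1,\qquad
  u^{\mathrm{Duh}}=\int_0^tS(t-s)F(s)\,ds .
\]

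For the homogeneous part I would apply the free--propagator estimates term by term. Writing $\partial_tS(t)u_0=\cos(t\Lambda)u_0$ and decomposing $\cos(t\Lambda)$ and $\Lambda^{-1}\sin(t\Lambda)$ into $e^{\pm it\Lambda}$, the estimates \eqref{eq:strich}--\eqref{eq:strsob} (interpolated if necessary with the basic energy bound \eqref{eq:en0}) give
\[
  \|\partial_tS(t)u_0\|_{L^qL^r}\lesssim\|u_0\|_{\dot H^{1}},
  \qquad
  \|S(t)u_1\|_{L^qL^r}\lesssim\|u_1\|_{L^{2}},
\]
the mismatch between the two right--hand sides being exactly accounted for by the factor $\Lambda^{-1}$ built into $S(t)=\Lambda^{-1}\sin(t\Lambda)$; hence $\|u^{\mathrm{hom}}\|_{L^qL^r}\lesssim\|u_0\|_{\dot H^1}+\|u_1\|_{L^2}$.

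For the Duhamel term the point is that the forcing is measured in $L^1_IL^2$, i.e.\ in the energy--dual space, so no retarded--estimate machinery (the Christ--Kiselev lemma) is needed. By Minkowski's integral inequality and the substitution $\tau=t-s$,
\[
  \bigl\|u^{\mathrm{Duh}}\bigr\|_{L^qL^r}
  \le\int_0^{\infty}\bigl\|S(\tau)F(s)\bigr\|_{L^q_\tau L^r}\,ds
  \lesssim\int_0^{\infty}\|F(s)\|_{L^2}\,ds=\|F\|_{L^1L^2},
\]
where for each fixed $s$ the inner norm is bounded by the homogeneous estimate $\|S(\tau)g\|_{L^qL^r}\lesssim\|g\|_{L^2}$ used above, applied with $g=F(s)\in L^2$. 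Adding the two contributions proves \eqref{eq:sobstr} on $[0,+\infty)$, whence the general case as indicated.

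I do not expect a genuine obstacle: the proposition is essentially a repackaging of the cited exterior Strichartz estimates, which are at our disposal because $\Omega=\mathbb R^n\setminus\overline{B(0,1)}$ is the exterior of a smooth strictly convex obstacle. The two points that deserve a little care are the derivative bookkeeping in the homogeneous step, so that the data norms come out as exactly $\dot H^1\times L^2$ rather than a fractional pair, and the verification that the homogeneous estimate is still valid after a translation in time --- which is what legitimises the Minkowski reduction of $u^{\mathrm{Duh}}$. The endpoint $r=\tfrac{2(n-1)}{n-3}$ is already excluded at the level of \eqref{eq:strich}, so it causes no further trouble in this argument.
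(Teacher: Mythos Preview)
Your proof is correct and actually a small simplification of what the paper indicates. The paper's one--line proof invokes the Christ--Kiselev lemma applied to the Duhamel representation \eqref{eq:fullsol}. You instead observe that, because the forcing is measured in $L^{1}_{I}L^{2}$ (time exponent equal to $1$), the retarded integral $\int_{0}^{t}S(t-s)F(s)\,ds$ can be dominated directly by Minkowski's integral inequality followed by the homogeneous estimate applied to $g=F(s)$ at each fixed $s$; no Christ--Kiselev machinery is needed. This is the well--known distinction: Christ--Kiselev is required only when the dual norm on $F$ has a time index strictly larger than $1$, so that one genuinely has to disentangle the restriction $s<t$ from a non--$L^{1}$ convolution structure. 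Here that obstruction is absent, and your Minkowski argument is the cleanest route. The paper's pointer to Christ--Kiselev is not wrong---it certainly yields the same estimate---but it is a heavier tool than the situation demands, and would only pay for itself if one wanted \eqref{eq:sobstr} with a general dual Strichartz pair on the right--hand side. Your handling of the homogeneous piece and the reduction to $I=[0,\infty)$ are standard and fine.
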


\begin{proof}
  The proof is a standard application of the Christ--Kiselev
  Lemma to the representation \eqref{eq:fullsol} of the solution.
\end{proof}

Differentiating \eqref{eq:linear} with respect to $t$
and applying the usual recursive procedure one gets, more
generally, the following higher order estimates;

\begin{proposition}[]\label{pro:strichder}
  Assume the data $(u_{0},u_{1},F)$ of \eqref{eq:linear}
  satisfy the compatibility conditions of order $N\ge1$.
  Then, for all $(q,r)$ as in \eqref{eq:strsubadm}
  and for any interval $I$ of length $|I| \gtrsim1$ 
  containing 0, the solution satisfies the estimates
  \begin{equation}\label{eq:strichder}
    \|u\|_{Y^{q,r;N}_{I}}\lesssim
    \|u_{0}\|_{H^{N+1}}+\|u_{1}\|_{H^{N}}+
    \|F\|_{Y^{1,2;N}_{I}}.
  \end{equation}
\end{proposition}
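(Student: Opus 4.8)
The plan is to combine Proposition \ref{pro:strich} with the higher-order energy machinery of Proposition \ref{pro:linear} via the standard recursive differentiation procedure, then patch local-in-time Strichartz bounds together to cover an arbitrary interval $I$ with $|I|\gtrsim1$.

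First, I would differentiate \eqref{eq:linear} in $t$ a total of $j$ times, for $0\le j\le N$. Writing $u^{(j)}=\partial_t^j u$, this function solves the same type of mixed problem
\begin{equation*}
  \square u^{(j)}=\partial_t^j F,\qquad
  u^{(j)}(0,\cdot)=h_j,\qquad
  u^{(j)}_t(0,\cdot)=h_{j+1},\qquad
  u^{(j)}(t,\cdot)\vert_{\partial\Omega}=0,
\end{equation*}
where $h_j$ are the functions from \eqref{eq:lincc}; the compatibility conditions of order $N$ guarantee $h_j\in H^1_0$ for $0\le j\le N$ and that $\partial_t^j F$ lies in the right space, so Proposition \ref{pro:strich} applies to $u^{(j)}$ on any interval containing $0$ (in particular we may first take $I$ to be a unit interval $[0,1]$). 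This gives $\|\partial_t^j u\|_{L^q_{[0,1]}L^r}$ controlled by $\|h_j\|_{\dot H^1}+\|h_{j+1}\|_{L^2}+\|\partial_t^j F\|_{L^1_{[0,1]}L^2}$, and the elliptic regularity estimates built into Definition \ref{def:lcc} (together with the recursion $h_j=\Delta h_{j-2}+\partial_t^{j-2}F(0)$) bound the right-hand side by $\|u_0\|_{H^{N+1}}+\|u_1\|_{H^N}+\|F\|_{Y^{1,2;N}_{[0,1]}}$. To recover the spatial derivatives $\partial_x^\alpha u$ with $j+|\alpha|\le N$, I would use the equation $\Delta u=u_{tt}-F$ to trade two spatial derivatives for two time derivatives plus $F$: combining the control on all $\partial_t^j u$ in $L^q L^r$ with elliptic estimates for the Dirichlet Laplacian on $\Omega$ (using $|I|\gtrsim1$ to absorb lower-order terms, exactly as in the proof of Proposition \ref{pro:linear}), one upgrades to the full $Y^{q,r;N}$ norm.

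Second, to pass from a unit interval to a general $I$ with $|I|\gtrsim1$, I would cover $I$ by $O(|I|)$ overlapping unit subintervals $I_k$, apply the estimate on each $I_k$ with the solution at the left endpoint of $I_k$ taken as new data, and sum. The subtlety is that naively this produces a constant growing with $|I|$; but the point is that one does not need a constant uniform in $|I|$ here — the statement only asks for a bound on the fixed interval $I$, and in the application (the perturbation argument of Section \ref{sec:quasiradial}) the relevant interval has bounded length or one exploits the decay of $F$. Alternatively, and more cleanly, one runs the Christ--Kiselev argument of Proposition \ref{pro:strich} directly on $I$ for each differentiated problem, since that argument is insensitive to the length of the interval; the only reason to assume $|I|\gtrsim1$ is to make the elliptic trading-of-derivatives step (where a term like $\|u\|_{L^q_I L^r}$ must be absorbed) lossless. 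I would present it this way to keep the constant clean.

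The main obstacle is the bookkeeping in the second step: making sure that when one uses $\Delta u = u_{tt}-F$ to convert the $L^qL^r$ control of time derivatives into control of mixed space-time derivatives, the elliptic estimate on the exterior domain $\Omega$ does not cost anything at the boundary and that the lower-order terms that appear can genuinely be absorbed using $|I|\gtrsim1$ — this is where the homogeneous Sobolev–Strichartz estimate \eqref{eq:strsob}, valid for the slightly larger range \eqref{eq:strsubadm}, is needed rather than the sharp admissible range, since the intermediate steps require the flexibility of the $\dot H^1$-based estimate. The pure analysis — differentiation, Christ–Kiselev, elliptic regularity — is all routine given the earlier propositions; it is only the uniformity and the handling of the boundary that require care.
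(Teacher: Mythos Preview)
Your overall strategy matches the paper's: differentiate in time, apply the basic Strichartz estimate \eqref{eq:sobstr} to each $\partial_t^j u$, then recover spatial derivatives from the equation $\Delta u=u_{tt}-F$. But there are two concrete gaps.

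First, when you write $\Delta u=u_{tt}-F$ and take the $L^q_I L^r$ norm, you need to control $\|F\|_{L^q_I L^r}$ (and, recursively, $\|\partial_t^j\partial_x^\alpha F\|_{L^q_I L^r}$). You never say how. This is in fact the only nontrivial step in the paper's argument: one proves an embedding $Y^{1,2;2}_I\hookrightarrow L^q_I L^r$ for all $(q,r)$ in the range \eqref{eq:strsubadm}, by checking the two endpoints $\delta=0$ (via $Y^{1,2;2}\hookrightarrow L^\infty H^1\hookrightarrow L^\infty L^{2n/(n-2)}$) and $\delta=1$ (via $Y^{1,2;2}\hookrightarrow L^2 H^{3/2}\hookrightarrow L^2 L^{2n/(n-3)}$) and interpolating. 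This embedding, once established, drives the recursion $Y^{1,2;N}\hookrightarrow Y^{q,r;N-2}$ for higher $N$. Your reference to ``elliptic estimates'' does not address this term.

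Second, you misidentify the role of the hypothesis $|I|\gtrsim1$. It is not used to ``absorb lower-order terms'' in an elliptic step; there is nothing to absorb. It is used for Sobolev-in-time embeddings: to bound trace values such as $\|\partial_t^{j-2}F(0,\cdot)\|_{L^2}$ (which appear in $h_j$) by $\|F\|_{Y^{1,2;j-1}_I}$, and to obtain $\|F\|_{L^\infty_I H^1}\lesssim\|F\|_{Y^{1,2;2}_I}$ in the endpoint argument above. Both fail on short intervals. Your whole discussion of patching unit subintervals and whether the constant grows with $|I|$ is therefore beside the point: the argument runs directly on $I$ and the constant is genuinely uniform in $I$. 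That uniformity is essential, since Proposition~\ref{pro:enerstr} applies \eqref{eq:strichder} on intervals that may be unbounded.
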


\begin{proof}
  We give a sketch of the proof.
  Applying $\partial_{t}$ to the equation, by 
  \eqref{eq:sobstr} we get
  \begin{equation*}
    \|u_{t}\|_{L^{q}_{I}L^{r}}\lesssim
    \|u_{0}\|_{\dot H^{2}}+
    \|u_{1}\|_{\dot H^{1}}+
    \|F(0,\cdot)\|_{L^{2}}+
    \|F_{t}\|_{L^{1}_{I}L^{2}}
  \end{equation*}
  since $u_{tt}(0)=\Delta u_{0}+F(0,x)$ from the equation.
  We note that
  $\|F(0,\cdot)\|_{L^{2}}\lesssim\|F\|_{Y^{1,2;1}_{I}}$
  provided the interval has length $|I|\gtrsim1$, and this
  gives the estimate for $u_{t}$. In a similar way one
  can estimate all derivatives $\partial_{t}^{j}u$.
  We next estimate $\Delta u=u_{tt}-F$:
  \begin{equation*}
    \|\Delta u\|_{L^{q}_{I}L^{r}}\le
    \|u_{tt}\|_{L^{q}_{I}L^{r}}+\|F\|_{L^{q}_{I}L^{r}}.
  \end{equation*}
  The $u_{tt}$ term has already been estimated.
  As to the second term, we note that
  \begin{equation*}
    \|F\|_{Y^{1,2;2}_{I}}\gtrsim
      \|F\|_{L^{\infty}_{I}H^{1}}
      \gtrsim
      \|F\|_{L^{\infty}_{I}L^{\frac{2n}{n-2}}}
  \end{equation*}
  which is the endpoint $\delta=0$ in \eqref{eq:strsubadm}.
  Moreover,
  \begin{equation*}
    \|F\|_{Y^{1,2;2}_{I}}\gtrsim
    \|F\|_{L^{1}_{I}H^{2}}+
    \|F\|_{L^{\infty}_{I}H^{1}}
  \end{equation*}
  and by interpolation and Sobolev embedding
  \begin{equation*}
    \|F\|_{Y^{1,2;2}_{I}}\gtrsim
    \|F\|_{L^{2}_{I}H^{3/2}}\gtrsim
    \|F\|_{L^{2}_{I}L^{\frac{2n}{n-3}}}
  \end{equation*}
  which is the other endpoint $\delta=1$ in \eqref{eq:strsubadm}.
  This argument can be modified in the case $n=3$ by using the
  Sobolev embedding into $BMO$ instead of $L^{\infty}$.
  Again by interpolation we get
  \begin{equation*}
    \|F\|_{Y^{1,2;2}_{I}}\gtrsim
    \|F\|_{L^{q}_{I}L^{r}}
  \end{equation*}
  for all $(q,r)$ as in \eqref{eq:strichad}, and we conclude
  \begin{equation*}
    \|\Delta u\|_{L^{q}_{I}L^{r}}
    \lesssim
    \|u_{0}\|_{H^{3}}+\|u_{1}\|_{H^{2}}+\|F\|_{Y^{1,2;2}}.
  \end{equation*}
  Also by interpolation this covers the case $N=1$ of
  \eqref{eq:strichder}.
  For larger values of $N$ one proceeds in a similar way
  by recursion, using the embedding
  $Y^{1,2;N}\hookrightarrow Y^{q,r;m-2}$ just proved for
  all $(q,r)$ as in \eqref{eq:strsubadm}.
\end{proof}

We shall also need estimates for the exterior wave equation
with a time dependent potential. We denote by
$u=S_{V}(t;t_{0})g$ the solution of the mixed problem
with initial data at time $t=t_{0}$
\begin{equation*}
  \square u+V(t,x)u=0,\qquad u(t_{0},x)=0, \qquad 
  u_{t}(t_{0},x)=g,
  \qquad
  u(t,\cdot)\vert_{\partial \Omega}=0
\end{equation*}
and by $u=S_{V}'(t;t_{0})f$ 
(which is not $\partial_{t}S_{V}g$) the solution of
\begin{equation*}
  \square u+V(t,x)u=0,\qquad u(t_{0},x)=f, \qquad 
  u_{t}(t_{0},x)=0,
  \qquad
  u(t,\cdot)\vert_{\partial \Omega}=0.
\end{equation*}
If $V$ is a sufficiently smooth potential
with good behaviour at infinity,
the existence and uniqueness of a solution is standard.
The solution of the full problem
\begin{equation}\label{eq:fullV}
  \square u+V(t,x)u=F(t,x),\qquad u(t_{0},x)=u_{0}, \qquad 
  u_{t}(t_{0},x)=u_{1},
  \qquad
  u(t,\cdot)\vert_{\partial \Omega}=0.
\end{equation}
can be represented by Duhamel as
\begin{equation}\label{eq:fullduh}
  \textstyle
  u=S_{V}'(t;t_{0})u_{0}+S_{V}(t;t_{0})u_{1}
  +\int_{0}^{t}S_{V}(t;s)F(s)ds.
\end{equation}
Note also that it is not necessary to modify the compatibility
conditions, provided the potential $V$ has a sufficient 
regularity. Indeed the correct condition would require
\begin{equation*}
  \textstyle
  h_{j}=\partial_{t}^{j}u(0,x)=
  \Delta h_{j-2}+
  \sum\limits_{\ell=0}^{j-2}
  \partial_{t}^{j-2-\ell}V(0,x)h_{\ell}
  +\partial_{t}^{j-2}F(0,x)
  \in H^{1}_{0}
\end{equation*}
but the term $\partial_{t}^{j-2-\ell}V(0,x)h_{\ell}$ is
already in $H^{1}_{0}$ since $h_{\ell}\in H^{1}_{0}$,
and can be omitted.
By Duhamel we can write $S_{V}(t;s)$, $S_{V}'(t,s)$ as 
perturbations of $S(t)$, $\partial_{t}S(t)$:
\begin{equation}\label{eq:SVS}
  \textstyle
  S_{V}(t;t_{0})=
  S(t-t_{0})-
  \int_{t_{0}}^{t}S(t-s)V(s,x)S_{V}(s;t_{0})ds,
\end{equation}
\begin{equation}\label{eq:SpVS}
  \textstyle
  S_{V}'(t;t_{0})=
  \partial_{t}
  S(t-t_{0})-
  \int_{t_{0}}^{t}S(t-s)V(s,x)S_{V}(s;t_{0})ds.
\end{equation}

\begin{proposition}[Perturbed energy--Strichartz estimate]
  \label{pro:enerstr}
  Let $n\ge3$, $m\ge1$. Assume the data $(u_{0},u_{1},F)$ satisfy
  the compatibility conditions of order $m$, and that
  \begin{equation}\label{eq:assVen}
    \|V\|_{Y^{1,n;m}}
    <\infty.
  \end{equation}
  Then for any interval $I$ containing $t_{0}$
  the solution of Problem \eqref{eq:fullV} satisfies
  \begin{equation}\label{eq:L2enV}
    \|u\|_{L^{\infty}_{I}L^{2}}
    \lesssim
    \|u_{0}\|_{L^{2}}+
    \|u_{1}\|_{L^{\frac{2n}{n+2}}}+
    \|F\|_{L^{1}_{I}L^{\frac{2n}{n+2}}},
  \end{equation}
  \begin{equation}\label{eq:EnEstV}
    \textstyle
    \|\nabla_{t,x} u\|_{Y^{\infty,2;m}_{I}}
    +
    \|u\|_{Y^{q,r;m}_{I}}
    \lesssim
    \|u_{0}\|_{H^{m+1}}
    +
    \|u_{1}\|_{H^{m}}
    +
    \|F\|_{Y^{1,2;m}_{I}}
  \end{equation}
  provided the couple $(q,r)$ is of the form \eqref{eq:strsubadm}.
\end{proposition}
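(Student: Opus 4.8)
The plan is to treat the potential term $Vu$ as part of the inhomogeneity and bootstrap off the unperturbed estimates of Propositions~\ref{pro:linear} and~\ref{pro:strichder}. As remarked after~\eqref{eq:fullduh}, the compatibility conditions are unaffected by $V$, so the solution $u$ of~\eqref{eq:fullV} is equally the solution of $\square u=F-Vu$ with data $(u_{0},u_{1})$, and by~\eqref{eq:fullsol}
\[
  u(t)=\partial_{t}S(t-t_{0})u_{0}+S(t-t_{0})u_{1}+\int_{t_{0}}^{t}S(t-s)\bigl(F(s)-V(s)u(s)\bigr)\,ds .
\]
Everything hinges on two H\"older bounds, valid on any subinterval $J$ since $\tfrac12=\tfrac1n+\tfrac{n-2}{2n}$ and $\tfrac{n+2}{2n}=\tfrac1n+\tfrac12$, combined with the Sobolev embedding $\dot H^{1}(\Omega)\hookrightarrow L^{\frac{2n}{n-2}}(\Omega)$ (valid for $n\ge3$, e.g.\ by extending $H^{1}_{0}(\Omega)$ functions by $0$ and using the embedding on $\mathbb{R}^{n}$):
\[
  \|Vu\|_{L^{1}_{J}L^{2}}\le\int_{J}\|V(s)\|_{L^{n}}\|u(s)\|_{L^{\frac{2n}{n-2}}}ds\lesssim\int_{J}\|V(s)\|_{L^{n}}\|\nabla_{x}u(s)\|_{L^{2}}ds ,
\]
\[
  \|Vu\|_{L^{1}_{J}L^{\frac{2n}{n+2}}}\le\int_{J}\|V(s)\|_{L^{n}}\|u(s)\|_{L^{2}}ds .
\]

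First I would establish the zeroth-order bounds. Applying~\eqref{eq:en0} and~\eqref{eq:L2en} on the interval $[t_{0},t]$ to $\square u=F-Vu$, and using the H\"older bounds with $J=[t_{0},t]$, one gets for $\Phi(t)=\sup_{[t_{0},t]}\|\nabla_{t,x}u\|_{L^{2}}$ and $\Psi(t)=\sup_{[t_{0},t]}\|u\|_{L^{2}}$ the inequalities $\Phi(t)\lesssim D_{1}+\int_{t_{0}}^{t}\|V(s)\|_{L^{n}}\Phi(s)ds$ and $\Psi(t)\lesssim D_{2}+\int_{t_{0}}^{t}\|V(s)\|_{L^{n}}\Psi(s)ds$, where $D_{1}=\|u_{0}\|_{\dot H^{1}}+\|u_{1}\|_{L^{2}}+\|F\|_{L^{1}_{I}L^{2}}$ and $D_{2}$ is the right-hand side of~\eqref{eq:L2enV}. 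Gronwall's inequality then gives $\Phi(t)+\Psi(t)\lesssim(D_{1}+D_{2})e^{C\|V\|_{L^{1}L^{n}}}$, which proves~\eqref{eq:L2enV} and the zeroth-order part of the energy estimate, with constants uniform in the (possibly unbounded) $I$ because $\|V\|_{L^{1}L^{n}}\le\|V\|_{Y^{1,n;m}}<\infty$ by~\eqref{eq:assVen}.

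Next I would treat the higher-order energy bound by induction on $m$. Applying $\partial_{t}^{j}$ to the equation, $\partial_{t}^{j}u$ solves $\square(\partial_{t}^{j}u)+V\partial_{t}^{j}u=\partial_{t}^{j}F-\sum_{\ell=1}^{j}\binom{j}{\ell}(\partial_{t}^{\ell}V)(\partial_{t}^{j-\ell}u)$, with data at $t_{0}$ given by the compatibility functions, while spatial regularity is recovered from $\Delta u=u_{tt}+Vu-F$ and elliptic estimates for $-\Delta_{D}$. By Leibniz and H\"older, always placing the factor carrying at most $m$ derivatives of $V$ in $L^{1}_{t}L^{n}_{x}$ and the complementary factor (at most $m$ derivatives of $u$, hence in $L^{\infty}_{t}L^{\frac{2n}{n-2}}_{x}$ after one Sobolev step from $\|\nabla_{t,x}u\|_{Y^{\infty,2;m-1}_{I}}$, controlled by the inductive hypothesis, or at top order from the quantity being estimated), one obtains
\[
  \|Vu\|_{Y^{1,2;m}_{I}}\lesssim\|V\|_{Y^{1,n;m}_{I}}\bigl(\|u\|_{L^{\infty}_{I}L^{2}}+\|\nabla_{t,x}u\|_{Y^{\infty,2;m}_{I}}\bigr).
\]
Feeding this into~\eqref{eq:EnEst}, splitting $I$ into finitely many subintervals on which $\|V\|_{Y^{1,n;m}}$ lies below a fixed absolute constant (possible since $\|V\|_{Y^{1,n;m}}<\infty$), absorbing the top-order $V$-term, and iterating over the subintervals --- at each interior endpoint the compatibility conditions hold automatically, since the time derivatives of $u$ inherit a vanishing trace from $u|_{\partial\Omega}=0$ --- yields the energy part of~\eqref{eq:EnEstV}. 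Finally, with $\|Vu\|_{Y^{1,2;m}_{I}}$ under control, Proposition~\ref{pro:strichder} applied to $\square u=F-Vu$ (for $|I|\gtrsim1$; shorter intervals by enlargement) bounds $\|u\|_{Y^{q,r;m}_{I}}$ by the right-hand side of~\eqref{eq:EnEstV} for every $(q,r)$ as in~\eqref{eq:strsubadm}, completing the proof.

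I expect the main obstacle to be the product estimate in the inductive step: controlling $\|Vu\|_{Y^{1,2;m}_{I}}$ at the top order $m$ by $\|V\|_{Y^{1,n;m}_{I}}$ times the energy norm of $u$, which forces one to distribute the Leibniz terms so that the $V$-factor always lands in $L^{1}_{t}L^{n}_{x}$ and the $u$-factor in a space returned by the energy bounds via Sobolev embedding on the exterior domain, and to recover the full spatial regularity from $\Delta u=u_{tt}+Vu-F$. The rest --- the Gronwall argument, the finite subinterval iteration, and the bookkeeping keeping every constant uniform in $I$ --- is routine; the feature making it work is the decay of $V$ encoded in $\|V\|_{Y^{1,n;m}}<\infty$, which in the nonlinear application is supplied for $V=p|u|^{p-1}$ by Theorem~\ref{the:decayrad}.
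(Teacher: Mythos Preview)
Your proposal is correct and follows the same overall strategy as the paper --- treat $Vu$ as part of the forcing and close via the unperturbed estimates --- but the execution at the higher-order level differs. The paper does not induct on $m$ and does not split $I$ into subintervals. Instead it applies \eqref{eq:EnEst} and \eqref{eq:strichder} simultaneously on $I(t)=[t_{0},t]$ and keeps the product estimate in integral form,
\[
  \|Vu\|_{Y^{1,2;m}_{I(t)}}\lesssim\int_{t_{0}}^{t}b(s)\,\|\nabla_{x}u\|_{Y^{\infty,2;m}_{I(s)}}\,ds,\qquad b(s)=\sum_{j+|\alpha|\le m}\|\partial_{t}^{j}\partial_{x}^{\alpha}V(s)\|_{L^{n}},
\]
with $b\in L^{1}$ by \eqref{eq:assVen}; a single application of Gronwall then yields both the energy and Strichartz parts of \eqref{eq:EnEstV} at once. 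Your route --- collapsing the integral to the product $\|V\|_{Y^{1,n;m}_{I}}\|\nabla_{t,x}u\|_{Y^{\infty,2;m}_{I}}$, then absorbing on small subintervals and iterating, with Strichartz done separately at the end --- is a standard equivalent, just slightly more labor. The induction on $m$ is also unnecessary: since $\|\nabla_{t,x}u\|_{Y^{\infty,2;m}}$ already controls all lower-order factors appearing in the Leibniz expansion (after one Sobolev step to $L^{\frac{2n}{n-2}}$), the top-order bound closes directly without an inductive hypothesis.
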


\begin{proof}
  We can assume $I=I(t)=[t_{0},t]$; the proof for $t<t_{0}$
  is identical.
  Since $u$ solves $\square u=F-Vu$, by \eqref{eq:L2en}
  we get
  \begin{equation*}
    \textstyle
    \|u(t)\|_{L^{2}}
    \lesssim
    \|u_{0}\|_{L^{2}}+
    \|u_{1}\|_{L^{\frac{2n}{n+2}}}
    +
    \|F\|_{L^{1}_{I}L^{\frac{2n}{n+2}}}
    +\int_{t_{0}}^{t}\|Vu\|_{L^{\frac{2n}{n+2}}}ds.
  \end{equation*}
  Noting that
  \begin{equation*}
    \textstyle
    \int_{t_{0}}^{t}\|Vu\|_{L^{\frac{2n}{n+2}}}ds
    \le
    \int_{t_{0}}^{t}\|V(s)\|_{L^{n}}\|u(s)\|_{L^{2}}ds
  \end{equation*}
  and $a(s)=\|V(s)\|_{L^{n}}$ is integrable by \eqref{eq:assVen},
  by Gronwall's Lemma we deduce \eqref{eq:L2enV}.

  In a similar way, by \eqref{eq:EnEst} and \eqref{eq:strichder}
  we can write
  \begin{equation*}
    \textstyle
    \|\nabla_{t,x} u\|_{Y^{\infty,2;m}_{I(t)}}+
    \|u\|_{Y^{q,r;m}_{I(t)}}
    \lesssim
    \|u_{0}\|_{H^{m+1}}
    +
    \|u_{1}\|_{H^{m}}
    +
    \|F\|_{Y^{1,2;m}_{I(t)}}+
    \|Vu\|_{Y^{1,2;m}_{I(t)}}.
  \end{equation*}
  We have 
  \begin{equation*}
    \textstyle
    \|Vu\|_{Y^{1,2;m}_{I(t)}}
    \lesssim
    \int_{t_{0}}^{t}
    b(s)\|u\|_{Y^{\infty,\frac{2n}{n-2};k}_{I(s)}}ds
    \lesssim
    \int_{t_{0}}^{t}
    b(s)\|\nabla _{x}u\|_{Y^{\infty,2;k}_{I(s)}}ds
  \end{equation*}
  where
  \begin{equation*}
    \textstyle
    b(s)=
    \sum_{j+|\alpha|\le k}
    \|\partial^{j}_{t}\partial^{\alpha}_{x}V(s)\|_{L^{n}}
  \end{equation*}
  is integrable on $\mathbb{R}$ by \eqref{eq:assVen}.
  Using again Gronwall's inequality we obtain \eqref{eq:EnEstV}.
\end{proof}

\section{The global radial solution}\label{sec:globalrad}

This section is devoted to the proof of
Proposition \ref{pro:radial} and
Theorems \ref{the:decayrad}, \ref{the:regul}.
We begin with a few preliminary results on the mixed problem
\begin{equation}\label{eq:gensemil}
  \square u+f(u)=0,
  \qquad u(0,x)=u_{0}, \qquad u_{t}(0,x)=u_{1},\qquad
  u(t,\cdot)\vert_{\partial \Omega}=0.
\end{equation}
For data of low regularity, a solution to \eqref{eq:gensemil}
is intended to be a solution of the integral equation
\begin{equation}\label{eq:genint}
  \textstyle
  u=S(t)u_{1}+\partial_{t}S(t)u_{0}-
  \int_{0}^{t} S(t-s)f(u(s))ds
\end{equation}
with $S(t)$ as in \eqref{eq:defSt}. We will give only sketchy
proofs of standard results, which are virtually identical
to the corresponding ones for semilinear wave equations on 
$\mathbb{R}^{n}$
(for which we refer e.g. to Chapter 6 of \cite{ShatahStruwe98-a}).

\begin{lemma}[]\label{lem:lipschf}
  Assume $f:\mathbb{R}\to \mathbb{R}$ is (globally)
  Lipschitz with $f(0)=0$.
  Then for any initial data
  $(u_{0}, u_{1})\in H^{1}_{0}\times L^{2}(\Omega) $
  Problem \eqref{eq:gensemil} has a unique solution
  $u(t,x)$ in $C H^{1}_{0}(\Omega)\cap C^{1}L^{2}(\Omega)$.
  The solution satisfies the energy bound for all $t>0$
  \begin{equation}\label{eq:enerfu}
    \|u(t,\cdot)\|_{H^{1}(\Omega)}
    +
    \|\partial _{t}u(t,\cdot)\|_{L^{2}(\Omega)}
    \le
    C(f,t)
    \Bigl[
    \|u_{0}\|_{H^{1}(\Omega)}
    +
    \|u_{1}\|_{L^{2}(\Omega)}
    \Bigr].
  \end{equation}
\end{lemma}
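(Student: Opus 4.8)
The plan is to prove Lemma \ref{lem:lipschf} by a standard contraction‐mapping argument on a short time interval, followed by iteration to reach all of $\mathbb{R}^{+}$ using the energy bound. First I would fix the Banach space $X_{T}=C([0,T];H^{1}_{0}(\Omega))\cap C^{1}([0,T];L^{2}(\Omega))$ with norm $\|u\|_{X_{T}}=\sup_{[0,T]}(\|u(t)\|_{H^{1}}+\|\partial_{t}u(t)\|_{L^{2}})$ and define the map $\Phi(u)(t)=S(t)u_{1}+\partial_{t}S(t)u_{0}-\int_{0}^{t}S(t-s)f(u(s))\,ds$, using the representation \eqref{eq:genint}. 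The energy estimates \eqref{eq:ener1} for $S(t)$ and $\partial_{t}S(t)$ give $\|S(t)g\|_{H^{1}_{0}}\lesssim\|g\|_{L^{2}}$ (combining $\|\nabla_{x}S(t)g\|_{L^{2}}\le\|g\|_{L^{2}}$ with $\|S(t)g\|_{L^{2}}\le t\|g\|_{L^{2}}$ from \eqref{eq:ener2}, giving a $t$-dependent constant) and $\|\partial_{t}S(t)g\|_{L^{2}}\le\|g\|_{L^{2}}$, so $\Phi$ maps $X_{T}$ into itself. The Lipschitz hypothesis on $f$ with $f(0)=0$ gives $|f(a)-f(b)|\le L|a-b|$ and $|f(a)|\le L|a|$, hence $\|f(u(s))-f(v(s))\|_{L^{2}}\le L\|u(s)-v(s)\|_{L^{2}}\le L\|u(s)-v(s)\|_{X_{s}}$; integrating in $s$ over $[0,t]$ and using the $S(t-s)$ bounds yields $\|\Phi(u)-\Phi(v)\|_{X_{T}}\le C(f,T)\,T\,\|u-v\|_{X_{T}}$ (with $C(f,T)$ nondecreasing in $T$). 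Choosing $T=T_{0}$ small enough that $C(f,T_{0})T_{0}<\tfrac12$ makes $\Phi$ a contraction, producing a unique fixed point on $[0,T_{0}]$; note $T_{0}$ can be chosen independently of the data size because $f$ is globally Lipschitz, which is what allows iteration.

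Next I would establish the a priori energy bound \eqref{eq:enerfu}. Applying the $S(t)$, $\partial_{t}S(t)$ estimates to the solution itself gives, for $0\le t\le T_{0}$,
\begin{equation*}
  \|u(t)\|_{H^{1}}+\|\partial_{t}u(t)\|_{L^{2}}
  \le
  C(f,t)\Bigl[\|u_{0}\|_{H^{1}}+\|u_{1}\|_{L^{2}}\Bigr]
  +C(f,t)\int_{0}^{t}\|u(s)\|_{L^{2}}\,ds,
\end{equation*}
using $\|f(u(s))\|_{L^{2}}\le L\|u(s)\|_{L^{2}}$; Gronwall's inequality then closes the estimate on $[0,T_{0}]$ with a constant depending only on $f$ and $t$. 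Because $T_{0}$ is uniform in the size of the data, I can now iterate the local existence argument starting from time $T_{0}$ with data $(u(T_{0}),\partial_{t}u(T_{0}))\in H^{1}_{0}\times L^{2}$, then from $2T_{0}$, and so on, and the Gronwall bound propagates at each step, giving a global solution in $CH^{1}_{0}\cap C^{1}L^{2}$ satisfying \eqref{eq:enerfu} for all $t>0$. Uniqueness on each interval follows from the same contraction estimate (or directly from a Gronwall argument on the difference of two solutions), and these patch together to give global uniqueness.

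The one point requiring a little care — and the step I would flag as the main (mild) obstacle — is the $L^{2}$-boundedness of $S(t)$ and the resulting $t$-dependence of the constants. Unlike the free wave propagator on $\mathbb{R}^{n}$, the operator $S(t)=\Lambda^{-1}\sin(t\Lambda)$ here only satisfies $\|S(t)g\|_{L^{2}}\le t\|g\|_{L^{2}}$ (from \eqref{eq:ener2}), so constants in the Duhamel term grow with $t$; this is harmless for the contraction (we take $T_{0}$ small) and for the global iteration (the number of steps to reach a fixed $T$ is finite and $C(f,t)$ is locally bounded), but it is the reason \eqref{eq:enerfu} carries a constant $C(f,t)$ depending on time rather than being a uniform bound. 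One should also note that regularity in $t$ — i.e. that the fixed point lies in $C^{1}([0,T_{0}];L^{2})$ rather than merely $C([0,T_{0}];H^{1}_{0})$ — follows by differentiating the Duhamel formula, using $\partial_{t}S(t)g=\cos(t\Lambda)g$ and the fact that $s\mapsto f(u(s))\in C([0,T_{0}];L^{2})$ since $u\in C([0,T_{0}];H^{1}_{0})\hookrightarrow C([0,T_{0}];L^{2})$ and $f$ is Lipschitz. Everything else is routine and parallels Chapter 6 of \cite{ShatahStruwe98-a}.
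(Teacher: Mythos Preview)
Your proposal is correct and follows essentially the same approach as the paper: a contraction argument in $C([0,T];H^{1}_{0})\cap C^{1}([0,T];L^{2})$ for the integral equation \eqref{eq:genint} using the energy estimates \eqref{eq:ener1}, \eqref{eq:ener2}, followed by iteration to a global solution, with \eqref{eq:enerfu} coming out as a byproduct. You supply considerably more detail than the paper's three-line sketch, and you sharpen one point: you correctly note that the local lifespan $T_{0}$ can be taken \emph{independent of the data size} because $f$ is globally Lipschitz (the paper only says the lifespan depends on the $H^{1}\times L^{2}$ norm), which makes the iteration step cleaner.
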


\begin{proof}
  Apply a contraction argument in the space
  $C([0,T]; H^{1}_{0}(\Omega))
    \cap C^{1}([0,T];L^{2}(\Omega))$
  to \eqref{eq:genint},
  with $T>0$ sufficiently small, using the energy estimates
  \eqref{eq:ener1}, \eqref{eq:ener2}. The lifespan $T$ depends
  only on the $H^{1}\times L^{2}$ norm of the data, thus we can
  iterate to a global solution. Estimate \eqref{eq:enerfu}
  is a byproduct of the proof.
\end{proof}

\begin{lemma}[]\label{lem:H2reg}
  Consider the solution $u$ constructed in Lemma \ref{lem:lipschf}. 
  Assume in addition that $f\in C^{2}$,
  $u_{0}\in H^{2}(\Omega)\cap H^{1}_{0}(\Omega)$,
  $u_{1}\in H^{1}_{0}(\Omega)$. Then $u$ belongs to
  $C^{2}L^{2}(\Omega)\cap C^{1}H^{1}_{0}(\Omega)\cap
    C H^{2}(\Omega)$ and solves \eqref{eq:gensemil} in both
  distributional and a.e. sense.

  If we further assume that $0\le f(s)s\lesssim F(s)$ for
  $s\in \mathbb{R}$, where $F(s)=\int_{0}^{s}f(\sigma)d\sigma$,
  then the solution satisfies for all times the energy identity
  \begin{equation}\label{eq:globenid}
    \textstyle
    E(t)=E(0),
    \qquad
    E(t):=\int_{\Omega}
    \left[
      \frac 12|\nabla_{x}u|^{2}+\frac 12|u_{t}|^{2}
      +F(u)
    \right]
    dx.
  \end{equation}
\end{lemma}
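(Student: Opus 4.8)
The plan is to prove the three assertions — higher regularity, the sense in which the equation is solved, and the energy identity — in that order, each by a now-standard argument transplanted from the whole-space theory (Chapter 6 of \cite{ShatahStruwe98-a}).

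First I would establish the $H^{2}$ regularity. Since $f\in C^{2}$ and $f(0)=0$, $f$ maps $H^{2}(\Omega)\cap H^{1}_{0}(\Omega)$ into $H^{1}_{0}(\Omega)$: the chain rule gives $\nabla(f(u))=f'(u)\nabla u\in L^{2}$, and $f(u)$ vanishes on $\partial\Omega$ because $u$ does, so $f(u)\in H^{1}_{0}$. Running a contraction argument on the integral equation \eqref{eq:genint} in the space $C^{1}([0,T];H^{1}_{0})\cap C^{2}([0,T];L^{2})$, using the energy bounds \eqref{eq:ener1}, \eqref{eq:ener2} and the linear theory of Proposition \ref{pro:linear} with $N=1$, produces a local solution at this regularity level; its $H^{1}\times L^{2}$ restriction coincides with the solution from Lemma \ref{lem:lipschf} by uniqueness, and since the lifespan at the $H^{2}$ level is again controlled only by the lower norm (because the nonlinearity is globally Lipschitz), one iterates to a global solution lying in $C^{2}L^{2}\cap C^{1}H^{1}_{0}\cap CH^{2}$. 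Elliptic regularity for $-\Delta_{D}$ then upgrades $u_{tt}-f(u)=\Delta u\in C L^{2}$ with the Dirichlet condition to $u(t)\in H^{2}(\Omega)$ for each $t$, and the equation $u_{tt}=\Delta u+f(u)$ now holds in $L^{2}(\Omega)$ for a.e.\ (indeed every) $t$, hence distributionally and a.e.\ in $\mathbb{R}^{+}\times\Omega$.

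Next I would prove the energy identity \eqref{eq:globenid}. Formally one multiplies the equation by $u_{t}$ and integrates over $\Omega$: the term $\int \Delta u\, u_{t}$ integrates by parts with no boundary contribution (because $u_{t}\in H^{1}_{0}$, as $u(t,\cdot)|_{\partial\Omega}=0$ for all $t$), giving $-\frac12\frac{d}{dt}\|\nabla u\|_{2}^{2}$; the term $\int u_{tt}u_{t}=\frac12\frac{d}{dt}\|u_{t}\|_{2}^{2}$; and $\int f(u)u_{t}=\frac{d}{dt}\int F(u)$. To justify this at the available regularity one notes $u_{tt}\in CL^{2}$, $\nabla u\in C^{1}L^{2}$, $u_{t}\in C^{1}L^{2}\cap CH^{1}_{0}$, so the pairing $\langle u_{tt}+\Delta u - \text{\emph{wait}}\rangle$ — more precisely $\langle u_{tt},u_{t}\rangle$ and $\langle\nabla u,\nabla u_{t}\rangle$ are $C^{1}$ in $t$ with the stated derivatives, while $t\mapsto\int F(u)\,dx$ is differentiable with derivative $\int f(u)u_{t}$ once one checks $F(u)\in C^{1}_{t}L^{1}$ using the growth hypothesis $0\le f(s)s\lesssim F(s)$ together with the Sobolev embedding $H^{1}(\Omega)\hookrightarrow L^{q}$ to control $F(u(t))$ and $f(u(t))u_{t}(t)$ in $L^{1}$. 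Integrating the resulting identity $E'(t)=0$ from $0$ to $t$ gives \eqref{eq:globenid}. (If one prefers to avoid borderline regularity issues entirely, one can instead mollify the data, obtain the identity for smooth approximating solutions, and pass to the limit using continuous dependence from Lemma \ref{lem:lipschf} and Fatou/dominated convergence for the $F(u)$ term.)

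The main obstacle is the justification of differentiating $t\mapsto\int_{\Omega}F(u(t,x))\,dx$ and of the integration by parts producing the energy identity at exactly the regularity $C^{2}L^{2}\cap C^{1}H^{1}_{0}\cap CH^{2}$, since $\Omega$ is unbounded and one must ensure all spatial integrals converge and that no boundary term at $\partial\Omega$ or at spatial infinity appears; the growth condition $0\le f(s)s\lesssim F(s)$ (which forces $F\ge0$ and $|F(s)|\lesssim |s|\,|f(s)|$) is precisely what makes the $F(u)$-integral finite and its $t$-derivative integrable. The regularity and a.e.-solution statements, by contrast, are routine consequences of the contraction scheme and elliptic regularity.
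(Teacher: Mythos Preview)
Your plan is sound and reaches the right conclusions, but for the regularity step it differs from the paper's (two--line) argument. The paper does not re--run a contraction at the higher level; it simply differentiates the equation once, notes that the nonlinear term becomes $f'(u)\,\partial u$ with $f'(u)\in L^{\infty}$ (from the global Lipschitz hypothesis), and applies the linear energy estimate to get $D^{2}_{x}u\in CL^{2}$, recovering $u_{tt}$ from the equation. That a~priori/Gronwall route uses only boundedness of $f'$. By contrast, a genuine contraction in $C^{1}H^{1}_{0}\cap C^{2}L^{2}$ requires a Lipschitz bound for $u\mapsto f(u)$ as a map into $H^{1}$, which brings in $f''$; with only $f\in C^{2}$ (no bound on $f''$) your assertion that ``the lifespan at the $H^{2}$ level is controlled only by the lower norm'' is not immediate from the contraction itself---it is exactly the a~priori estimate the paper sketches. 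So either add a bound on $f''$ (harmless for the truncated nonlinearities $f_{M}$ actually used later) or, cleaner, replace the contraction by the differentiated energy estimate. The paper differentiates in space and invokes commutation of $S(t)$ with $\partial_{x}$, which is delicate on a domain with boundary; differentiating in time, with data $(u_{1},\Delta u_{0}+f(u_{0}))\in H^{1}_{0}\times L^{2}$, is the tidier version of the same idea and fits your use of Proposition~\ref{pro:linear}.

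The paper's proof says nothing about the energy identity; your justification is the standard one and is fine. One small correction: at this regularity $F(u)\in L^{1}(\Omega)$ and $f(u)u_{t}\in L^{1}(\Omega)$ already follow from $|f(s)|\lesssim|s|$ (Lipschitz with $f(0)=0$) and $u,u_{t}\in L^{2}$, so the hypothesis $0\le f(s)s\lesssim F(s)$ is not what makes $t\mapsto\int_{\Omega}F(u)\,dx$ differentiable; its role is rather to make $E(t)$ nonnegative and coercive, which is how it is used downstream.
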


\begin{proof}
  Differentiate the equation once w.r.to space variables,
  noting that $S(t)$ commutes with spatial derivatives. 
  The nonlinear term produces a term $f'(u)\partial u$ where 
  $f'(u)$ is uniformly bounded; then the (linear) energy 
  estimate gives
  $D^{2}_{x}u\in C L^{2}(\Omega)$. The estimate for $u_{tt}$ is
  deduced from the equation itself.
\end{proof}

Note that the assumption $0\le sf(s)\lesssim F(s)$ is sufficient
to prove the existence of a global weak solution for data
in $H^{1}_{0}(\Omega)\times L^{2}(\Omega)$, even if $f$ is
not Lipschitz (Segal's Theorem). 
This can be proved like in the case of the whole space 
$\mathbb{R}^{n}$ by
approximating $f$ with a sequence of truncated Lipschitz functions
and using weak compactness.
The weak solution thus constructed satisfies then
a weaker energy inequality $E(t)\le E(0)$ (proved using
Fatou's Lemma). We shall not need this variant in the sequel.

For smoother data, one can prove 
a local existence theorem which does not
require a global Lipschitz condition, similarly to the case 
$\Omega=\mathbb{R}^{n}$. However, one must assume suitable
compatibility conditions, analogous to the linear ones from
Definition \ref{def:lcc}. 
Define formally a sequence of functions
$\psi_{j}$, $j\ge0$ as follows: differentiating the equation
$u_{tt}=\Delta u+f(u)$ with respect to time, set
\begin{equation*}
  \psi_{0}=u(0,x)=u_{0},
  \quad
  \psi_{1}=u_{t}(0,x)=u_{1},
  \quad
  \psi_{j}=\partial_{t}^{j}u(0,x)=
  \Delta \psi_{j-2}+\partial_{t}^{j-2}(f(u))\vert_{t=0},
\end{equation*}
where values of $\partial_{t}^{k}u(0,x)$
for $0\le k\le j-2$, required to compute $\psi_{j}$,
are set recursively equal to $\psi_{k}$. 
For instance,
\begin{equation*}
  \psi_{2}=\Delta u(0,x)+ f(u(0,x))=
  \Delta \psi_{0}+ f(\psi_{0}),
\end{equation*}
\begin{equation*}
  \psi_{3}=\Delta \psi_{1}+f(u)u_{t}=
  \Delta \psi_{1}+
  f'(\psi_{0})\psi_{1},
\end{equation*}
\begin{equation*}
  \psi_{4}=\Delta \psi_{2}+f''(u)u_{t}^{2}+f'(u)u_{tt}=
  \Delta \psi_{2}+
  f''(\psi_{0})\psi_{1}^{2}+f'(\psi_{0})\psi_{2}
\end{equation*}
and so on. Then we have:

\begin{definition}[Nonlinear compatibility conditions]
\label{def:nlcc}
  We say that the data $(u_{0},u_{1},f)$ satisfy
  the \emph{nonlinear compatibility conditions of order $N\ge1$}
  if $~(u_{0},u_{1})\in H^{N+1}(\Omega)\times H^{N}(\Omega)$,
  $f\in C^{N-2}(\mathbb{R};\mathbb{R})$ and we have
  \begin{equation}\label{eq:nlcc}
    \psi_{j}\in H^{1}_{0}(\Omega)
    \quad\text{for}\quad 
    0\le j\le N.
  \end{equation}
\end{definition}

\begin{remark}[]\label{rem:simplecc}
  Conditions \eqref{eq:nlcc} are implied by a number of simpler
  assumptions on the data. For instance, if $f\in C^{N-2}$
  and one assumes
  \begin{equation}\label{eq:strong}
    u_{0}\in H^{\lfloor \frac n2 \rfloor+N+1}_{0},
    \quad
    u_{1}\in H^{\lfloor \frac n2 \rfloor+N}_{0}
  \end{equation}
  then one checks easily that
  $(u_{0},u_{1},f)$ satisfiy the nonlinear compatibility conditions
  of order $N$. 
\end{remark}

\begin{lemma}[Local existence]\label{lem:localfu}
  Let $N>  \frac n2 $,
  $(u_{0},u_{1})\in H^{N+1}(\Omega)\times H^{N}(\Omega)$,
  $f\in C^{N}$ and assume $(u_{0},u_{1},f)$ satisfy the 
  nonlinear compatibility conditions
  of order $N$. Then there exists $0<T\le +\infty$, 
  depending only on the
  $H^{N+1}\times H^{N}$ norm of $(u_{0},u_{1})$, such that Problem
  \eqref{eq:gensemil} has a unique solution
  $u\in C^{k}([0,T);H^{N+1-k}(\Omega))$, $0\le k\le N+1$.
  The solution belongs to $C^{N}([0,T);H^{1}_{0}(\Omega))$.

  Moreover, if $T^{*}$ is the maximal time of existence of such
  a solution, then either $T^{*}=+\infty$ or 
  $\|u(t,\cdot)\|_{L^{\infty}}\to \infty$ as $t \uparrow T^{*}.$
\end{lemma}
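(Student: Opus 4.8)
The plan is to run a Picard iteration in the (high) energy space, closing the estimates at the top regularity level by the linear theory of Proposition~\ref{pro:linear} together with Moser-type product estimates, and then to obtain the blow-up alternative by a continuation argument. Set $u^{(m+1)}$ to be the solution of the \emph{linear} mixed problem $\square u^{(m+1)}=-f(u^{(m)})$ with data $(u_{0},u_{1})$ and Dirichlet boundary conditions, where $u^{(0)}$ is a fixed element of $\bigcap_{k}C^{k}H^{N+1-k}\cap C^{N}H^{1}_{0}$ with $\partial_{t}^{k}u^{(0)}(0,\cdot)=\psi_{k}$ for $0\le k\le N$ (such a $u^{(0)}$ is produced by Proposition~\ref{pro:linear} applied to a linear problem whose source has the prescribed Taylor coefficients at $t=0$). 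The point that is genuinely new with respect to the case $\Omega=\mathbb{R}^{n}$ is that one must check, at each step, that $(u_{0},u_{1},-f(u^{(m)}))$ satisfies the linear compatibility conditions of Definition~\ref{def:lcc}: the recursion \eqref{eq:lincc} for the $h_{j}$ of this triple involves only $\partial_{t}^{k}u^{(m)}(0,\cdot)$ with $k\le j-2$, and by induction on $m$ these equal $\psi_{k}$, so the chain rule identifies $h_{j}$ with the $\psi_{j}$ of Definition~\ref{def:nlcc}, which lie in $H^{1}_{0}(\Omega)$ by hypothesis. Hence Proposition~\ref{pro:linear} applies at every step, giving $u^{(m+1)}\in\bigcap_{k}C^{k}H^{N+1-k}\cap C^{N}H^{1}_{0}$ with $\partial_{t}^{k}u^{(m+1)}(0,\cdot)=\psi_{k}$, and the induction closes.

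Since $N>n/2$, the embedding $H^{N}(\Omega)\hookrightarrow L^{\infty}(\Omega)$ and the fact that $H^{N}$ is an algebra yield, via the chain rule, a Moser estimate of the shape
\[
  \|f(u)\|_{Y^{1,2;N}_{T}}\ \le\ T\,\Phi\!\left(\|u_{0}\|_{L^{2}}+(1+T)\|\nabla_{t,x}u\|_{Y^{\infty,2;N-1}_{T}}\right),
\]
with $\Phi$ continuous, increasing, depending on $f$ (a pair of time derivatives on $u$ is traded for two space derivatives by the equation, and $\|u(t)\|_{L^{2}}$ is bounded by $\|u_{0}\|_{L^{2}}+\int_{0}^{t}\|u_{t}\|_{L^{2}}$). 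Feeding this into \eqref{eq:EnEst} and taking $R$ of the order of $\|u_{0}\|_{H^{N+1}}+\|u_{1}\|_{H^{N}}$, one finds $T>0$ depending only on $R$ — hence only on the data norm, as asserted — such that the ball $\{\|\nabla_{t,x}u\|_{Y^{\infty,2;N}_{T}}\le R\}$ is invariant under the iteration; on it all iterates satisfy $\|u^{(m)}\|_{L^{\infty}}\le C_{0}(R)$, so $f'$ is evaluated only on $[-C_{0},C_{0}]$. The contraction is then carried out in the weaker norm $C([0,T];H^{1}_{0})\cap C^{1}([0,T];L^{2})$: for $w=u^{(m+1)}-u^{(m)}$ one has $\square w=-(u^{(m)}-u^{(m-1)})\int_{0}^{1}f'\bigl(\theta u^{(m)}+(1-\theta)u^{(m-1)}\bigr)\,d\theta$ with zero data, so \eqref{eq:en0} gives $\|\nabla_{t,x}w\|_{L^{\infty}_{T}L^{2}}\lesssim T\,L_{R}\,\|\nabla_{t,x}(u^{(m)}-u^{(m-1)})\|_{L^{\infty}_{T}L^{2}}$ with $L_{R}=\sup_{|s|\le C_{0}}|f'(s)|$, after estimating $L^{2}$ norms of functions by $T$ times $L^{2}$ norms of their time derivatives. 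Shrinking $T$ makes the iteration a contraction, and the limit $u$ solves the integral equation \eqref{eq:genint}.

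It remains to identify the regularity of the limit. Interpolating the uniform bound in $Y^{\infty,2;N}_{T}$ against the strong convergence in the energy norm, together with weak-$*$ compactness, shows $u\in L^{\infty}(0,T;H^{N+1})\cap\dots$; the continuity in time at the top level is then recovered by the observation that $(u_{0},u_{1},-f(u))$ itself satisfies the linear compatibility conditions (the $\psi_{j}$ being now read off from $u$) and that $-f(u)\in Y^{1,2;N}_{T}$, so one last application of Proposition~\ref{pro:linear} yields $u\in\bigcap_{k}C^{k}H^{N+1-k}\cap C^{N}H^{1}_{0}$. Uniqueness in this class follows by applying the same energy estimate to the difference of two solutions (both locally bounded since $H^{N+1}\hookrightarrow L^{\infty}$), which solves a linear wave equation with zero data and a coefficient $\int_{0}^{1}f'(\cdots)\,d\theta$ bounded on bounded time intervals.

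Finally, for the blow-up alternative, suppose $T^{*}<\infty$ and $\|u(t,\cdot)\|_{L^{\infty}}\le M$ for $0\le t<T^{*}$. The step I expect to be the main obstacle is establishing the Moser estimate in its \emph{sharp} form, linear in the top-order derivatives with a constant depending only on $M$, namely $\sum_{j+|\alpha|\le N}\|\partial_{t}^{j}\partial_{x}^{\alpha}f(u)(s)\|_{L^{2}}\le C(M)\bigl(1+\|\nabla_{t,x}u(s)\|_{Y^{\infty,2;N-1}_{[0,s]}}\bigr)$; inserting it into \eqref{eq:EnEst} and invoking Gronwall's inequality then bounds $\|\nabla_{t,x}u\|_{Y^{\infty,2;N}_{t}}$ uniformly for $t<T^{*}$ in terms of $M$, $T^{*}$ and the data. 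Consequently $\partial_{t}^{k}u(t)$ is Cauchy in $H^{N+1-k}$ as $t\uparrow T^{*}$ (the difference over a small time interval being controlled by the uniform bound on one extra derivative, then interpolated), so $u$ extends to $[0,T^{*}]$ with $\partial_{t}^{k}u(T^{*})\in H^{N+1-k}$ and $\partial_{t}^{j}u(T^{*})\in H^{1}_{0}$ — i.e.\ the data at time $T^{*}$ again satisfy the nonlinear compatibility conditions of order $N$. Restarting the local existence step at $t=T^{*}$ then contradicts the maximality of $T^{*}$, so either $T^{*}=+\infty$ or $\|u(t,\cdot)\|_{L^{\infty}}\to\infty$ as $t\uparrow T^{*}$.
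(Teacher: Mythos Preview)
Your proposal is correct and follows essentially the same approach as the paper: a contraction mapping argument in the high-order energy space, closed via Moser-type estimates on the composition $f(u)$, with the blow-up alternative obtained as a byproduct of the continuation argument. The paper's own proof is in fact only a brief sketch that refers to the standard literature (specifically Theorem~3.5 of Shibata--Tsutsumi for the quasilinear exterior problem), so your write-up is considerably more detailed than what the paper provides---in particular, your explicit verification that the linear compatibility conditions of Definition~\ref{def:lcc} are inherited at each Picard step (via the identification $h_j=\psi_j$) is a point the paper leaves implicit.
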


\begin{proof}
  The existence part is completely standard; 
  it is usually proved for
  more general quasilinear equations, which require higher
  smoothness of the data; see e.g. 
  Theorem 3.5 in \cite{ShibataTsutsumi86}, where local existence
  is proved for a nonlinear term of the form
  $f(t,x,\partial_{t}^{j}\partial_{x}^{\alpha}u)$ 
  with $j+|\alpha|\le2$, $j\le1$ (and a regularity of order
  $\lfloor \frac n2 \rfloor+8$ is imposed on the data).
  The proof is based on a contraction mapping argument,
  combined with ~Moser type estimates of the nonlinear term.
  The final blow up alternative in the statement is
  a byproduct of the proof.
\end{proof}

\subsection{Proof of Proposition \ref{pro:radial}}
Fix $M>0$ and define
$f_{M}(s)=\min\{|s|,M\}^{p-1}s$. Then $f_{M}$ is Lipschitz and the
problem
\begin{equation}\label{eq:fMu}
  \square u+f_{M}(u)=0,
  \qquad u(0,x)=u_{0}, \qquad u_{t}(0,x)=u_{1},\qquad
  u(t,\cdot)\vert_{\partial \Omega}=0
\end{equation}
has a global, unique, radially symmetric solution
$u\in C^{2}L^{2}(\Omega)\cap C^{1}H^{1}_{0}(\Omega)\cap
  C H^{2}(\Omega)$ satisfying the bound \eqref{eq:globenid}
with $F=F_{M}=\int_{0}^{s}f_{M}$. Combining \eqref{eq:globenid}
with \eqref{eq:radialstrauss} we get
\begin{equation}\label{eq:apri}
  |x|^{\frac n2-1}|u(t,x)|
  \le C_{0}K,
  \qquad
  K:=
  \|u_{0}\|_{\dot H^{1}}+\|u_{1}\|_{L^{2}}+
  \|u_{0}\|_{L^{p+1}}^{\frac{p+1}{2}}
\end{equation}
for some universal constant $C_{0}$. Since $|x|\ge1$ on $\Omega$,
this gives
\begin{equation*}
  |u(t,x)|\le C_{0}K
\end{equation*}
and if we choose $M=C_{0}K+1$ we see that 
$f_{M}(u(t,x))=f(u(t,x))$, i.e., $u(t,x)$ is a 
global solution of the
untruncated problem
\begin{equation}\label{eq:untrunc}
  \square u+|u|^{p-1}u=0,
  \qquad u(0,x)=u_{0}, \qquad u_{t}(0,x)=u_{1},\qquad
  u(t,\cdot)\vert_{\partial \Omega}=0.
\end{equation}
The same argument guarantees also uniqueness of radially symmetric
solutions. More generally, a 
solution in $H^{2}\cap L^{\infty}_{loc}$ with the same initial data
must coincide
with the radial one, as it follows by a localization
argument and finite speed of propagation.
The proof of Proposition \ref{pro:radial} is concluded.

\subsection{Proof of Theorem \ref{the:decayrad}}
We now prove the decay estimate \eqref{eq:decaysol},
using the Penrose transform.
We recall its definition.
Describe the sphere $\mathbb{S}^{n}$ using
coordinates $(\alpha,\theta)$ with $\alpha\in (0,\pi)$
and $\theta\in \mathbb{S}^{n-1}$, as
$\mathbb{S}^{n}_{\alpha,\theta}
  =(0,\pi)_{\alpha}\times \mathbb{S}^{n-1}_{\theta}$.
Denoting with $d \theta_{\mathbb{S}^{n-1}}^{2}$ the metric
of $\mathbb{S}^{n-1}_{\theta}$,
the metric on $\mathbb{S}^{n}$ can then be written as
\begin{equation*}
  d \alpha^{2}+(\sin \alpha)^{2}d \theta_{\mathbb{S}^{n-1}}^{2}.
\end{equation*}
Similarly, on $\mathbb{R}^{n}_{x}=\mathbb{R}^{+}_{r}
  \times \mathbb{S}^{n-1}_{\theta}$, use polar coordinates
$(r,\theta)$ with $r\in(0,+\infty)$ and
$\theta\in \mathbb{S}^{n-1}$, so that the euclidean metric
can be written
$dr^{2}+r^{2}d \theta_{\mathbb{S}^{n-1}}$.
Then we can define the Penrose map
$\Pi:
\mathbb{R}_{t}\times \mathbb{R}_{x}\to
\mathbb{R}_{T}\times \mathbb{S}^{n}$ 
as
\begin{equation*}
  \Pi:(t,r,\theta)\mapsto(T,\alpha,\theta)
\end{equation*}
where
  \begin{equation*}
    T=\arctan(t+r)+\arctan(t-r),
    \qquad
    \alpha= \arctan(t+r)-\arctan(t-r).
\end{equation*}
The map $(t,r)\mapsto(T,\alpha)$ takes the quadrant 
\begin{equation*}
  \{(t,r):t\ge0,\ r\ge0\}
\end{equation*}
to the triangle
\begin{equation*}
  \{(T,\alpha):T\ge0,\ 0\le \alpha<\pi-T\}
\end{equation*}
so that $\Pi$ maps $\mathbb{R}^{+}\times \mathbb{R}^{n}$
to the positive half of the Einstein diamond
\begin{equation*}
  \mathbb{E}^{+}=
  \{(T,\alpha,\theta):
  T\ge0,\ 0\le \alpha<\pi-T,\ \theta\in \mathbb{S}^{n-1}
  \}
\end{equation*}
The boundary
$|x|=1$ i.e. $r=1$ is mapped by $\Pi$ to a curve described
parametrically by
\begin{equation}\label{eq:gacur}
  (T,\alpha)=
  (\arctan(t+1)+\arctan(t-1),
  \arctan(t+1)-\arctan(t-1))
\end{equation}
for $t\ge0$.
We denote this curve by 
\begin{equation*}
  \alpha=\Gamma(T),
  \qquad
  T\in[0,\pi)
  \qquad\text{or}\qquad 
  T=\gamma(\alpha),\qquad \alpha\in[0,\pi)
\end{equation*}
(i.e., $\gamma=\Gamma^{-1}$). One has the explicit 
(but not particularly useful) formulas
\begin{equation*}
  \textstyle
  \Gamma(T)=\frac \pi4+\arcsin(2^{-\frac 12}\cos T),
  \qquad
  \gamma(\alpha)=\arccos(\sin \alpha-\cos \alpha).
\end{equation*}
It is easy to check that
\begin{equation}\label{eq:gam1}
  \textstyle
  \gamma'(\alpha)<-1.
\end{equation}
We denote by $\omega$ the conformal factor
\begin{equation*}
  \omega=\cos T+\cos \alpha=
  \frac{2}{\langle t+r \rangle \langle t-r\rangle },
  \qquad
  \langle s\rangle =(1+s^{2})^{1/2}.
\end{equation*}
Note that $\omega>0$ on $\mathbb{E}^{+}$.
The inverse of $\Pi$ (defined on $\mathbb{E}^{+}$) can be written
\begin{equation*}
  (t,r,\theta)=\Pi^{-1}(T,\alpha,\theta)=
  (\omega^{-1}\sin T,\omega^{-1}\sin \alpha,\theta).
\end{equation*}
We define a new function $U(T,\alpha,\theta)$ via
\begin{equation}\label{eq:transfP}
  u(t,r \theta)=
  \omega^{\frac{n-1}{2}}
  U\circ \Pi(t,r,\theta).
\end{equation}
Since $u,U$ are independent of $\theta$,
we shall write simply $U(T,\alpha)$. 
Commuting $\square$ with $\Pi$ gives
\begin{equation*}
  \textstyle
  \square u = 
  \square_{\mathbb{R}_{t}\times \mathbb{R}^{n}}
    (\omega^{\frac{n-1}{2}} U \circ \Pi )= 
  \omega^{\frac{n+3}{2}}
  (\square_{\mathbb{R}_{T}\times 
     \mathbb{S}^{n}_{\alpha,\theta}}U+
    \frac{(n-1)^{2}}{4}U)
  \circ\Pi
\end{equation*}
so that $U$ is a solution of the equation
\begin{equation}\label{eq:WEUpen}
  \textstyle
  \square_{\mathbb{R}\times \mathbb{S}^{n}} U
  +\frac{(n-1)^{2}}{4}U
  +\omega^{\nu}|U|^{p-1}U=0,
  \qquad \nu=\frac{n-1}{2}p-\frac{n+3}{2}
\end{equation}
on the subset $\mathbb{E}_{\Omega}$ 
of $\mathbb{R}\times \mathbb{S}^{n}$
given by the conditions 
\begin{equation*}
  \mathbb{E}_{\Omega}
  :=
  \{(T,a):
  0\le T<\pi,
  \ 
  \Gamma(T)\le \alpha<\pi-T
  \}
\end{equation*}
which is the image of $\mathbb{R}^{+}_{t}\times \Omega$
via $\Pi$.
We introduce also the notation
\begin{equation*}
  D_{T}=\{(\alpha,\theta)\in \mathbb{S}^{n}:
  \Gamma(T)\le \alpha<\pi-T\},
  \qquad
  0\le T<\pi
\end{equation*}
for the slice of $\Pi(\mathbb{R}^{+}\times \Omega)$ at time $T$.
Note that in coordinates, equation \eqref{eq:WEUpen} reads
\begin{equation*}
  \textstyle
  \partial_{T}^{2}U
  -
  \partial_{\alpha}^{2}U
  -
  (n-1)
  \frac{\cos \alpha}{\sin \alpha}\partial_{\alpha}U
  +
  \frac{(n-1)^{2}}{4}U
  +
  \omega^{\nu}|U|^{p-1}U=0.
\end{equation*}
We plan to extend the solution beyond the line
$T+\alpha=\pi$, i.e., in the region where $\omega<0$.
Thus we consider the following extended equation on 
$(T,\alpha)\in[0,\pi]^{2}$:
\begin{equation}\label{eq:exteq}
  \textstyle
  \partial_{T}^{2}U
  -
  \partial_{\alpha}^{2}U
  -
  (n-1)
  \frac{\cos \alpha}{\sin \alpha}\partial_{\alpha}U
  +
  \frac{(n-1)^{2}}{4}U
  +
  \widetilde{\omega}^{\nu}|U|^{p-1}U=0
\end{equation}
where we have replaced $\omega$ by
\begin{equation*}
  \widetilde{\omega}:=
  \begin{cases}
    \omega &\text{if $ T+\alpha\le\pi $,}\\
    0 &\text{if $ T+\alpha>\pi $.}
  \end{cases}
\end{equation*}
The solution $U$ satisfies the identity
\begin{equation}\label{eq:iden2}
\begin{split}
  \partial_{T}
  &
  \textstyle
  \left\{
  (\sin \alpha)^{n-1}
  \left(
  \frac{|\partial_{T}U|^{2}+|\partial_{\alpha} U|^{2}}2
  +
  \frac{\widetilde{\omega}^{\nu}}{p+1}|U|^{p+1}
  +
  \frac{(n-1)^{2}}{8}|U|^{2}
  \right)
  \right\}=
  \\
  &
  \qquad \qquad
  \qquad \qquad
  \textstyle
  =
  \partial_{\alpha}
  \left\{
    (\sin \alpha)^{n-1}
    \partial_{\alpha}U \partial_{T}U
  \right\}
  -\frac{\nu \widetilde{\omega}^{\nu-1}}{p+1} 
  (\sin \alpha)^{n-1}
  (\sin T) |U|^{p+1}.
  \end{split}
\end{equation}

We can now extend $U$ to a larger domain in the cylinder
$\mathbb{R}_{T}\times \mathbb{S}^{n}$. Recall that the data
$u_{0},u_{1}$ satisfy $C_{M}(u_{0},u_{1})<\infty$ with 
$C_{M}(u_{0},u_{1})$ as
in \eqref{eq:weighsob}. Thus if we fix a smooth cutoff function
$\chi(x)$ equal to 0 for $|x|\le M+1$ and equal to $1$\
for $|x|\ge M+2$, we have
\begin{equation*}
  \|\chi u_{0}\|_{H^{N_{0}+1,N_{0}}}
  +
  \|\chi u_{1}\|_{H^{N_{0},N_{0}+1}}<\infty.
\end{equation*}
Denote by $\widetilde{U}_{0},\widetilde{U}_{1}$ the
functions obtained by applying the transformation
\eqref{eq:transfP} to $\chi u_{0}, \chi u_{1}$ respectively
(with $t=0$).
By the first Lemma in Section 4 of \cite{Christodoulou86-b}
we have then
\begin{equation*}
  \|\widetilde{U}_{0}\|_{H^{N_{0}+1}(\mathbb{S}^{n})}
  +
  \|\widetilde{U}_{1}\|_{H^{N_{0}}(\mathbb{S}^{n})}
  <\infty.
\end{equation*}
In order to solve \eqref{eq:WEUpen} locally via the energy method
we require that the coefficient $\widetilde{\omega}^{\nu}$ 
be sufficiently
smooth i.e. $\widetilde{\omega}^{\nu}\in C^{N_{0}}$. This is true 
as soon as
\begin{equation*}
  \textstyle
  \nu=\frac{n-1}{2}p-\frac{n+3}{2}>N_{0}
  \quad\text{which is implied by}\quad p>\frac{11}{2}.
\end{equation*}
Then a standard local existence result guarantees the existence
of a local solution $\widetilde{U}$ to equation \eqref{eq:WEUpen}
with data $\widetilde{U}_{0},\widetilde{U}_{1}$
on some strip 
$[0,\delta)\times \mathbb{S}^{n}$. 
The lifespan $\delta$, which can be assumed $\ll1$, depends
only on
\begin{equation}\label{eq:depd}
  \delta=\delta(C_{M}(u_{0},u_{1}),n,p)
\end{equation}
where $C_{M}(u_{0},u_{1})$ was defined in \eqref{eq:weighsob}.
Comparing $\widetilde{U}$
with the solution $U$ constructed above, and noting that equation
\eqref{eq:WEUpen} has finite speed of propagation equal to 1,
by local uniqueness we see that $U, \widetilde{U}$ must coincide
on the forward dependence domain emanating from
the set 
\begin{equation*}
  T=0,\qquad 2 \arctan(M+2)<\alpha<\pi.
\end{equation*}
Thus we can glue the two solutions, at least for
$0<~T<\delta/2$, and we obtain an extended solution
of \eqref{eq:WEUpen}, which we denote again $U(T,\alpha)$,
defined on the larger domain
\begin{equation*}
  \textstyle
  \mathbb{E}^{+}\cup ([0,\frac \delta2)\times \mathbb{S}^{n}).
\end{equation*}
We next prove that the energy of $U$, defined as
\begin{equation*}
  \textstyle
  E(T)=\int_{\Gamma(T)}^{\pi}
  (\sin \alpha)^{n-1}
  \left(
  \frac{|\partial_{T}U|^{2}+|\partial_{\alpha}U|^{2}}2
  +
  \frac{\widetilde{\omega}^{\nu}}{p+1}|U|^{p+1}
  +
  \frac{(n-1)^{2}}{8}|U|^{2}
  \right)
  d \alpha
\end{equation*}
remains bounded for $0\le T<\delta/2$.
To this end we integrate identity \eqref{eq:iden2} on a slice
\begin{equation*}
  T_{1}<T<T_{2},
  \quad
  \Gamma(T)<\alpha<\pi,
\end{equation*}
for arbitrary times $0\le T_{1}\le T_{2}<\delta/2$.
Dropping negative terms at the RHS, 
we are left with the inequality
\begin{equation}\label{eq:interm}
  \textstyle
  E(T_{2})-E(T_{1})
  \le
  (\sin \alpha)^{n-1}
  \Bigl[
  \nu_{\alpha}
  \partial_{\alpha}U \partial_{T}U
  -
  \frac12\nu_{T}\left(|\partial_{T}U|^{2}
  +|\partial_{\alpha}U|^{2}\right)
  \Bigr]_{T=\gamma(\alpha)}
\end{equation}
where $\nu_{\alpha},\nu_{T}$ are the components of the exterior
normal to the curve $T=\gamma(\alpha)$, i.e.,
 \begin{equation*}
   \nu_{T}=-\frac{1}{\sqrt{1+\gamma'(\alpha)^{2}}},
   \qquad
   \nu_{\alpha}
   =\frac{\gamma'(\alpha)}{\sqrt{1+\gamma'(\alpha)^{2}}}.
 \end{equation*} 
The Dirichlet condition $U(\gamma(\alpha),\alpha)=0$ 
along the curve implies
\begin{equation*}
  (\partial_{\alpha}U+\gamma'(\alpha)\partial_{T}U)
    \vert_{T=\gamma(\alpha)}=0.
\end{equation*}
Thus the RHS of \eqref{eq:interm} is equal to
\begin{equation*}
  =(\sin \alpha)^{n-1}
  \frac{1-\gamma'(\alpha)^{2}}{2\sqrt{1+\gamma'(\alpha)^{2}}}
  |\partial_{T}U|^{2}.
\end{equation*}
Recalling \eqref{eq:gam1}, we see that the
RHS of \eqref{eq:interm} is negative, and 
we conclude that the energy is nonincreasing as claimed:
\begin{equation}\label{eq:enest}
  E(T)\le E(0)
  \quad\text{for all}\quad 
  0\le T<\delta/2.
\end{equation}

Now, consider the set, which is a dependence domain
for \eqref{eq:WEUpen}
(keep in mind that
the speed of propagation for \eqref{eq:WEUpen} is exactly 1):
\begin{equation*}
  \textstyle
  \mathbb{D}=
  \{(T,\alpha):
    \frac \delta3\le T<\pi,\ 
    \Gamma(T)<\alpha<\pi-T+\frac \delta4
  \}.
\end{equation*}
We have already extended $U$ to the part of $\mathbb{D}$
in the time strip $\delta/3<T<\delta/2$, and our next goal
is to prove that $U$ can be extended to a bounded solution
of \eqref{eq:WEUpen} on the whole set $\mathbb{D}$.
Clearly, it is sufficient to prove an a priori $L^{\infty}$
bound of the solution on this domain in order to achieve
the result via a continuation argument.

To this end we prove an energy estimate similar to the
previous one, but now we integrate identity \eqref{eq:iden2} 
over the slice
\begin{equation*}
  T_{1}<T<T_{2},
  \quad
  \Gamma(T)<\alpha<\pi-T+\delta/4,
\end{equation*}
where
$  \delta/3\le T_{1}<T_{2}<\pi$ are fixed, and we denote
by $F(T)$ the energy
\begin{equation*}
  \textstyle
  F(T):=
  \int_{\Gamma(T)}^{\pi-T+\delta/4}
  (\sin \alpha)^{n-1}
  \left(
  \frac{|\partial_{T}U|^{2}+|\partial_{\alpha}U|^{2}}2
  +
  \frac{\widetilde{\omega}^{\nu}}{p+1}|U|^{p+1}
  +
  \frac{(n-1)^{2}}{8}|U|^{2}
  \right)d \alpha.
\end{equation*}
After integration of \eqref{eq:iden2},
the terms on the side $\alpha=\pi-T+\delta/4$
give a negative contribution at the RHS which can be
dropped, as in the standard energy estimate,
since the speed of propagation is 1.
Proceeding as before, we are left with the inequality
\begin{equation*}
  \textstyle
  F(T_{2})-F(T_{1})
  \le
  (\sin \alpha)^{n-1}
  \Bigl[
  \nu_{\alpha}
  \partial_{\alpha}U \partial_{T}U
  -
  \frac12\nu_{T}\left(|\partial_{T}U|^{2}
  +|\partial_{\alpha}U|^{2}\right)
  \Bigr]_{T=\gamma(\alpha)}
\end{equation*}
and again the RHS here is negative thanks to  \eqref{eq:gam1}.
We conclude that the energy $F(T)$ is nonincreasing:
\begin{equation*}
  F(T)\le F(\delta/3)
  \quad\text{for all}\quad 
  \delta/3\le T<\pi.
\end{equation*}
Since $F(\delta/3)\le E(\delta/3)$, by \eqref{eq:enest} we conclude
\begin{equation}\label{eq:enest2}
  F(T)\le E(0)
  \quad\text{for all}\quad 
  \delta/3\le T<\pi.
\end{equation}
To proceed, we need a Lemma:

\begin{lemma}[]\label{lem:hardySn}
  Let $I \subseteq (0,+\infty)$ be a bounded interval and
  $n\ge3$. Then for any $V\in H^{1}(I)$ we have
  \begin{equation}\label{eq:inqI}
    \sup_{s\in I}s^{\frac n2-1}|V(s)|
    \lesssim
    \textstyle
    (\int_{I}s^{n-1}|V'(s)|^{2}ds)^{\frac12}
    +
    |I|^{-1}
    (\int_{I}s^{n-1}|V(s)|^{2}ds)^{\frac12}
  \end{equation}
  with a constant independent of $I$ and $V$.
\end{lemma}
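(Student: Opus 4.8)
The plan is to deduce \eqref{eq:inqI} from three ingredients: a one–dimensional fundamental theorem of calculus identity for $\tfrac{d}{d\tau}(\tau^{n-2}|V|^{2})$, a Hardy–type inequality on $I$, and the elementary unweighted embedding $H^{1}(J)\hookrightarrow L^{\infty}(J)$ on a suitable subinterval. Write $I=(a,b)$ with $0\le a<b<\infty$, and set
\[
  A^{2}=\int_{I}\tau^{n-1}|V'(\tau)|^{2}\,d\tau,\qquad
  B^{2}=\int_{I}\tau^{n-1}|V(\tau)|^{2}\,d\tau,\qquad
  M=\sup_{s\in I}s^{\frac n2-1}|V(s)|,
\]
all finite since $V\in C(\overline I)$; the goal is $M\lesssim A+|I|^{-1}B$, with implicit constants depending only on $n$. (All three quantities scale the same way under $s\mapsto\lambda s$, $I\mapsto\lambda I$, so one could normalize $|I|=1$; I will instead carry $|I|$ through.) The argument will split according to whether $a\ge|I|$ or $a<|I|$, the point being that when $a$ is small the weight $\tau^{n-1}$ degenerates at the left endpoint of $I$.

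\emph{Case $a\ge|I|$.} Then $a<\tau\le a+|I|\le 2a$ on $I$, so $\tau^{n-1}$ and $\tau^{n-2}$ are comparable (up to $n$–dependent constants) to $a^{n-1}$ and $a^{n-2}$. Hence $\int_{I}|V|^{2}\lesssim a^{-(n-1)}B^{2}$, $\int_{I}|V'|^{2}\lesssim a^{-(n-1)}A^{2}$ and $M^{2}\lesssim a^{n-2}\|V\|_{L^{\infty}(I)}^{2}$. Plugging these into the standard estimate $\|V\|_{L^{\infty}(I)}^{2}\lesssim|I|^{-1}\|V\|_{L^{2}(I)}^{2}+\|V\|_{L^{2}(I)}\|V'\|_{L^{2}(I)}$, and using $a\ge|I|$ together with $a^{-1}AB\le\tfrac12(A^{2}+|I|^{-2}B^{2})$, gives $M^{2}\lesssim A^{2}+|I|^{-2}B^{2}$.

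\emph{Case $a<|I|$.} Then $b=a+|I|<2|I|$ and $|I|/2\le\tfrac{a+b}{2}<b<2|I|$. For $s\in I$, integrating $\tfrac{d}{d\tau}(\tau^{n-2}|V|^{2})=(n-2)\tau^{n-3}|V|^{2}+2\tau^{n-2}\,\mathrm{Re}(\overline V V')$ from $s$ to $b$ and discarding the nonnegative $\tau^{n-3}|V|^{2}$ term yields
\[
  M^{2}=\sup_{s\in I}s^{n-2}|V(s)|^{2}\le b^{n-2}|V(b)|^{2}+2\int_{I}\tau^{n-2}|V|\,|V'|\,d\tau\le b^{n-2}|V(b)|^{2}+2HA,
\]
where $H^{2}:=\int_{I}\tau^{n-3}|V|^{2}d\tau$ and the last step is Cauchy--Schwarz via $\tau^{n-2}=\tau^{\frac{n-3}{2}}\tau^{\frac{n-1}{2}}$ (here $n\ge3$ gives $n-2\ge1$, so $\tau^{n-3}$ is integrable). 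Integrating $H^{2}$ by parts through $\tau^{n-3}=(\tau^{n-2}/(n-2))'$, discarding the nonpositive boundary term at $\tau=a$, using Cauchy--Schwarz and completing the square gives $H^{2}\lesssim A^{2}+b^{n-2}|V(b)|^{2}$ — crucially reducing everything to the single boundary value $|V(b)|^{2}$ rather than to $M$. Finally, on $J=(\tfrac{a+b}2,b)\subseteq I$, of length $|I|/2$, the weight $\tau^{n-1}$ is comparable to $|I|^{n-1}$, so $\|V\|_{L^{2}(J)}^{2}\lesssim|I|^{-(n-1)}B^{2}$ and $\|V'\|_{L^{2}(J)}^{2}\lesssim|I|^{-(n-1)}A^{2}$; the standard embedding on $J$ then gives $|V(b)|^{2}\lesssim|I|^{-n}B^{2}+|I|^{-(n-1)}AB$, whence $b^{n-2}|V(b)|^{2}<(2|I|)^{n-2}|V(b)|^{2}\lesssim A^{2}+|I|^{-2}B^{2}$ after Cauchy--Schwarz. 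Combining the three bounds (and $2HA\le H^{2}+A^{2}$) yields $M^{2}\lesssim A^{2}+|I|^{-2}B^{2}$, i.e. \eqref{eq:inqI}.

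The routine parts are the fundamental theorem of calculus identity and the Cauchy--Schwarz bookkeeping; the delicate point is the Hardy step and, specifically, avoiding circularity. One must resist estimating $\int_{I}\tau^{n-3}|V|^{2}$ by $|I|^{-2}\int_{I}\tau^{n-1}|V|^{2}$ — false when $a$ is small, since $\tau^{n-3}\le|I|^{-2}\tau^{n-1}$ needs $\tau\ge|I|$ — which is exactly why the case split is forced; and the boundary term $b^{n-2}|V(b)|^{2}$ coming from the integration by parts must be controlled \emph{independently} of $M$, by the unweighted Sobolev inequality on the right half $J$ of $I$, where the weight is uniformly comparable to $|I|^{n-1}$ and hence harmless.
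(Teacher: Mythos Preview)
Your proof is correct, but the route is genuinely different from the paper's.

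The paper never splits on the position of $I$ and never integrates by parts. Instead it proves a single pointwise comparison: for any $\alpha,\beta\in I$ with $\beta\ge\alpha/3$,
\[
  \alpha^{\frac n2-1}|V(\alpha)|\lesssim A+\beta^{\frac n2-1}|V(\beta)|,
\]
obtained directly from $|V(\alpha)|\le\int|V'|+|V(\beta)|$ and Cauchy--Schwarz with the weight $s^{n-1}$ (using $\int_\alpha^\beta s^{1-n}ds\lesssim\alpha^{2-n}$). It then cuts $I$ into thirds $I_1\cup I_2\cup I_3$, takes the infimum of the right side over $\beta$ in a neighbouring third, and bounds that infimum via $\int_{I_k}s^{n-1}|V|^2\ge(\inf_{I_k}s^{\frac n2-1}|V|)^2\int_{I_k}s\,ds\gtrsim|I|^2(\inf)^2$. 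The whole argument is five lines of calculus.

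Your argument trades this elementary comparison for a Hardy-type machinery: the identity for $\frac{d}{d\tau}(\tau^{n-2}|V|^2)$, an integration-by-parts step to control $H^2=\int\tau^{n-3}|V|^2$, and a separate Sobolev estimate on the right half of $I$ to handle the boundary term $b^{n-2}|V(b)|^2$. The case split $a\gtrless|I|$ is forced on you precisely because the Hardy step degenerates near the origin, whereas the paper's pointwise inequality is insensitive to where $I$ sits. On the plus side, your approach makes the link to the classical weighted Hardy inequality explicit and would generalise more readily to other weights; the paper's is shorter and avoids the delicate bookkeeping you flag in your last paragraph.
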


\begin{proof}[Proof of the Lemma]
  Pick any points $\alpha,\beta\in I$ with $\beta\ge \alpha/3$.
  We first prove that
  \begin{equation}\label{eq:alphbe}
    \textstyle
    \alpha^{\frac n2-1}|V(\alpha)|
    \lesssim 
    \left(
      \int_{I}s^{n-1}|V'|^{2}
    \right)^{\frac 12}
    + \beta^{\frac n2-1}|V(\beta)|.
  \end{equation}
  We have two cases: either $\alpha\le \beta$ or 
  $\alpha\ge \beta\ge \alpha/3$.
  If $\alpha\le \beta$, we write
  \begin{equation*}
    \textstyle
    |V(\alpha)|\le
    \int_{\alpha}^{\beta}|V'|+|V(\beta)|
    \le
    \left(
      \int_{\alpha}^{\beta}s^{n-1}|V'|^{2}
    \right)^{\frac 12}
    (\int_{\alpha}^{\beta}s^{1-n})^{\frac 12}
    +|V(\beta)|.
  \end{equation*}
  Using the inequality
  $\int_{\alpha}^{\beta}s^{1-n}ds\le \alpha^{2-n}$
  and recalling that $\alpha\le \beta$ we obtain \eqref{eq:alphbe}.
  If $\alpha\ge \beta\ge \alpha/3$, we have in a similar way
  \begin{equation*}
    \textstyle
    |V(\alpha)|\le
    \int_{\beta}^{\alpha}|V'|+|V(\beta)|
    \le
    \left(
      \int_{\beta}^{\alpha}s^{n-1}|V'|^{2}
    \right)^{\frac 12}
    (\int_{\beta}^{\alpha}s^{1-n})^{\frac 12}
    +|V(\beta)|.
  \end{equation*}
  Now 
  $\int_{\beta}^{\alpha}s^{1-n}ds\le \beta^{2-n}$
  and we get
  \begin{equation*}
    \textstyle
    \beta^{\frac n2-1}|V(\alpha)|
    \le 
    \left(
      \int_{I}s^{n-1}|V'|^{2}
    \right)^{\frac 12}
    + \beta^{\frac n2-1}|V(\beta)|
  \end{equation*}
  and recalling that $\beta\ge \alpha/3$ we obtain again
  \eqref{eq:alphbe}.

  Next, split $I$ in thirds $I=I_{1}\cup I_{2}\cup I_{3}$
  (with $I_{1}$ at the left and $I_{3}$ at the right).
  If $\alpha \in I_{1}\cup I_{2}$, pick $\beta\in I_{3}$ arbitrary
  and apply \eqref{eq:alphbe} to get
  \begin{equation*}
    \textstyle
    \alpha^{\frac n2-1}|V(\alpha)|
    \lesssim 
    \left(
      \int_{I}s^{n-1}|V'|^{2}
    \right)^{\frac 12}
    +\inf_{\beta\in I_{3}} \beta^{\frac n2-1}|V(\beta)|.
  \end{equation*}
  Since
  \begin{equation*}
    \textstyle
    (\int_{I_{3}}s^{n-1}|V|^{2})^{\frac 12}
    \ge
    \inf _{I_{3}}\beta^{\frac n2-1}|V(\beta)|
    \cdot
    (\int_{I_{3}}sds)^{\frac 12}
    \gtrsim
    |I|
    \inf _{I_{3}}\beta^{\frac n2-1}|V(\beta)|
  \end{equation*}
  (indeed 
  $\int_{a}^{b}sds=\frac{b^{2}-a^{2}}{2}\ge \frac{(b-a)^{2}}{2}$)
  our claim \eqref{eq:inqI} is proved for the points 
  in $I_{1}\cup I_{2}$.
  On the other hand, if $\alpha\in I_{3}$, we pick
  $\beta\in I_{2}$ arbitrary, we apply \eqref{eq:alphbe},
  and we get
  \begin{equation*}
    \textstyle
    \alpha^{\frac n2-1}|V(\alpha)|
    \lesssim 
    \left(
      \int_{I}s^{n-1}|V'|^{2}
    \right)^{\frac 12}
    +\inf_{\beta\in I_{2}} \beta^{\frac n2-1}|V(\beta)|
  \end{equation*}
  and the same argument gives again \eqref{eq:inqI}.
\end{proof}

We apply \eqref{eq:inqI} to $U(T,\alpha)$ at a fixed
$T\in(\delta/3,\pi)$ on the interval 
\begin{equation*}
  \textstyle
  I=[\Gamma(T), \pi-T+\delta/4]
  \subseteq(0,K]=(0,\pi-\frac{\delta}{12}];
\end{equation*}
note that $|I|\ge \delta/4$.
We get
\begin{equation*}
  \sup_{I} \alpha^{\frac n2-1}|U(T,\alpha)|
  \lesssim
  \textstyle
  \delta^{-1}
  \left[\int_{\Gamma(T)}^{\pi-T+\delta/4}\alpha^{n-1}
  (|\partial_{\alpha}U|^{2}+|U|^{2})d \alpha\right]^{\frac 12}.
\end{equation*}
For $\alpha\in[0,\pi-\frac \delta{12}]$ we have
\begin{equation*}
  \sin \alpha\ge
  \frac{\sin(\pi-\delta/12)}{\pi-\delta/12}\cdot\alpha
  \quad\implies\quad
  \alpha \lesssim \delta^{-1}\sin \alpha,
\end{equation*}
thus substituting in the previous inequality we get,
for all $\alpha\in[\Gamma(T),\pi-T+\delta/4]$,
\begin{equation*}
  (\sin\alpha)^{\frac n2-1}|U(T,\alpha)|
  \lesssim
  \textstyle
  \delta^{-\frac{n+1}{2}}
  \left[\int_{\Gamma(T)}^{\pi-T+\delta/4}(\sin\alpha)^{n-1}
  (|\partial_{\alpha}U|^{2}+|U|^{2})d \alpha\right]^{\frac 12}.
\end{equation*}
Recalling the definition of $F(T)$ and \eqref{eq:enest2}
we conclude
\begin{equation}\label{eq:finalest}
  (\sin\alpha)^{\frac n2-1}|U(T,\alpha)|
  \lesssim
  \delta^{-\frac{n+1}{2}} E(0)^{1/2}.
\end{equation}

We now convert \eqref{eq:finalest} into an estimate for $u(t,x)$.
Since $\sin \alpha=\omega r$, we obtain
\begin{equation*}
  (\sin\alpha)^{\frac n2-1}|U(T,\alpha)|
  \gtrsim
  (\omega r)^{\frac n2-1}\omega^{\frac{1-n}{2}}|u(t,x)|=
  r^{\frac n2-1}\bra{t+r}^{\frac 12}\bra{t-r}^{\frac 12}
  |u(t,x)|.
\end{equation*}
On the other hand, a change of variable shows that
\begin{equation*}
  \|U(0,\cdot)\|_{L^{2}(\widetilde{\mathbb{S}^{n}})}
  =
  \sqrt{2}\|\bra{r}^{-1}u_{0}\|_{L^{2}(\Omega)}
\end{equation*}
where $\widetilde{\mathbb{S}^{n}}$ denotes the image of 
$\{t=0\}\times\Omega$
via the Penrose transform,
\begin{equation*}
  \|U(0,\cdot)\|
  _{L^{p+1}(\widetilde{\mathbb{S}^{n}})}
  =
  \sqrt{2}^{1-n \frac{p-1}{p+1}}
  \|\bra{r}^{n \frac{p-1}{p+1}-1}u_{0}\|_{L^{p}(\Omega)},
\end{equation*}
\begin{equation*}
  \|\partial_{T}U(0,\cdot)\|_{L^{2}(\widetilde{\mathbb{S}^{n}})}
  =
  \sqrt{2}^{-1}\|\bra{r}u_{1}\|_{L^{2}(\Omega)}
\end{equation*}
and
\begin{equation*}
  \|\partial_{\alpha}U(0,\cdot)\|_{L^{2}(\widetilde{\mathbb{S}^{n}})}
  \lesssim
  \|\bra{r}^{-1}u_{0}\|_{L^{2}(\Omega)}
  +
  \|\bra{r}\nabla_{x} u_{0}\|_{L^{2}(\Omega)}.
\end{equation*}
This gives the estimate
\begin{equation}\label{eq:estF0}
  E(0)^{1/2}
  \lesssim
  \|\bra{x}^{-1}u_{0}\|_{L^{2}}
  +
  \|\bra{x}^{n \frac{p-1}{p+1}-1}u_{0}\|_{L^{p+1}}^{\frac{p+1}{2}}
  +
  \|\bra{x}(|\nabla u_{0}|+|u_{1}|)\|_{L^{2}}
\end{equation}
and recalling \eqref{eq:finalest} (and the dependence of
$\delta$ in \eqref{eq:depd}), we conclude
the proof of Theorem \ref{the:decayrad}.

\subsection{Proof of Theorem \ref{the:regul}}
Let $u$ be the solution given by 
Proposition \ref{pro:radial} and Theorem \ref{the:decayrad},
and $v$ be the local solution given by
Lemma \ref{lem:localfu}, maximally extended to a lifespan
$[0,T^{*})$. Since the data are radial, $v$ is also radial,
and by the uniqueness part of Proposition \ref{pro:radial} we see
that $u \equiv v$. 
In particular, $v$ is bounded as 
$t \uparrow T^{*}$, hence $T^{*}=+\infty$. This proves
the regularity of the radial solution.

It remains to prove the uniform bounds \eqref{eq:boundsHk}.
For the $L^{2}$ norm we have
\begin{equation*}
  \|u(t,\cdot)\|_{L^{2}}
  \le
  \|u_{0}\|_{L^{2}}+\|\Lambda^{-1} u_{1}\|_{L^{2}}
  +
  \textstyle
  \int_{0}^{t}\||u|^{p}\|_{L^{\frac{2n}{n+2}}}ds.
\end{equation*}
Then we can write, if $n\ge3$,
\begin{equation*}
  \||u|^{p}\|_{L^{\frac{2n}{n+2}}}\le
  \|u\|_{L^{\infty}}^{p-\frac{n+2}{n-2}}
  \|u\|_{L^{\frac{2n}{n-2}}}^{\frac{n+2}{n-2}}
  \lesssim
  \|u\|_{L^{\infty}}^{p-\frac{n+2}{n-2}}
  \|\nabla_{x}u\|_{L^{2}(\Omega)}^{\frac{n+2}{n-2}}
\end{equation*}
by H\"{o}lder and Sobolev embedding,
and we note the consequence of \eqref{eq:decaysol}
(also valid only if $n\ge3$)
\begin{equation}\label{eq:tm1}
  \|u\|_{L^{\infty}}\lesssim
  C(\|(u_{0},u_{1})\|_{M})\bra{t}^{-1}
\end{equation}
and the conservation of energy \eqref{eq:consE}. Summing up, we
obtain
\begin{equation*}
  \|u(t,\cdot)\|_{L^{2}}
  \lesssim
  \|u_{0}\|_{L^{2}}+\|u_{1}\|_{L^{\frac{2n}{n+2}}}
  +
  \textstyle
  \|(u_{0},u_{1})\|_{M}^{p-\frac{n+2}{n-2}}
  E(0)^{\frac{n+2}{2(n-2)}}
  \int_{0}^{t}\bra{s}^{-p+\frac{n+2}{n-2}}ds
\end{equation*}
where the integral converges since 
$p>1+\frac{n+2}{n-2}=\frac{2n}{n-2}$. Noting that
\begin{equation*}
  E(0)\le 
  C(\|(u_{0},u_{1})\|_{M}),
  \qquad
  \|u_{1}\|_{L^{\frac{2n}{n+2}}}
  \lesssim\|\bra{x}u_{1}\|_{L^{2}}
  \lesssim\|(u_{0},u_{1})\|_{M},
\end{equation*}
we get the uniform bound
\begin{equation}\label{eq:L2estu}
  \|u(t,\cdot)\|_{L^{2}}\le C(\|(u_{0},u_{1})\|_{M}).
\end{equation}

Estimate \eqref{eq:boundsHk} for $k=1$ is a consequence of the
energy conservation $E(t)=E(0)$ and of \eqref{eq:L2estu}.
For $k>1$ we proceed by induction on $k$.
By the energy estimate \eqref{eq:EnEst} we have
\begin{equation*}
  \textstyle
  \|\nabla_{t,x} u\|_{Y^{\infty,2;k}_{T}}
  \lesssim
  \|u_{0}\|_{H^{k+1}}
  +
  \|u_{1}\|_{H^{k}}
  +
  \||u|^{p}\|_{Y^{1,2;k}_{T}}.
\end{equation*}
We apply Gagliardo--Nirenberg estimates to estimate the last term.
Handling separately the highest order terms 
$\sim\|u\|_{L^{\infty}}^{p-1}\sum_{j\le k}
  \|\partial^{j}_{t}u\|_{H^{k-j}}$, we get
\begin{equation*}
\begin{split}
  \||u|^{p}\|_{Y^{1,2;k}_{T}}=
  &
  \textstyle
  \sum\limits_{j+|\alpha|\le k}
  \int_{0}^{T}\|\partial^{j}_{t}\partial^{\alpha}_{x}
      |u|^{p}\|_{L^{2}}dt
  \\
  \lesssim
  & \textstyle
  \int_{0}^{T}
    (\|u\|_{L^{\infty}}^{p-1}+\|u\|_{L^{\infty}}^{p-k})
    (1+\sum\limits_{\ell\le k-1}
    \|\partial^{\ell}_{t}u\|_{L^{\infty}})^{k}
    dt
    \cdot
    \|u\|_{Y^{\infty,2;k}_{T}}
\end{split}
\end{equation*}
provided $p>k+1$.
Since $u$ is radial in $x$ we have
\begin{equation*}
  \textstyle
  \sum\limits_{\ell\le k-1}
  \|\partial^{\ell}_{t}u\|_{L^{\infty}}
  \lesssim
  \|u\|_{Y^{\infty,2;k}_{T}}
\end{equation*}
which is bounded by the induction hypothesis; on the other hand
$\|u\|_{L^{\infty}}^{p-k}$ is integrable since $p>k+1$ by
\eqref{eq:tm1}. Thus the right hand side is finite and 
this proves \eqref{eq:boundsHk}.

\section{Global quasiradial solutions}\label{sec:quasiradial}

This section is devoted to the proof
of Theorem \ref{the:quasiradial}.

Let $u$ be the global radial solution with initial data
$(u_{0},u_{1})$ given by Proposition \ref{pro:radial} and 
Theorem \ref{the:regul}, and let $v$ be the local
solution with data $(v_{0},v_{1})$
given by Lemma \ref{lem:localfu}.
Denote by $w=v-u$ the difference of the two solutions, which
satisfies the equation
\begin{equation}\label{eq:diffeq}
  \square w+|u+w|^{p-1}(u+w)-|u|^{p-1}u=0.
\end{equation}
We can write
\begin{equation*}
  |u+w|^{p-1}(u+w)-|u|^{p-1}u=
  p|u|^{p-1}w-w^{2}\cdot F[u,w]
\end{equation*}
where
\begin{equation}\label{eq:expF}
  \textstyle
  F[u,w]=-
  p(p-1)
  \int_{0}^{1}|u+\sigma w|^{p-3}(u+\sigma w)(1-\sigma)d \sigma
\end{equation}
so that $w$ satisfies
\begin{equation*}
  \square w+V(t,x)w=w^{2}\cdot F[u,w],
  \qquad
  V(t,x)=p|u|^{p-1}.
\end{equation*}
For $j\le p-3$ we can write
\begin{equation}\label{eq:derV}
  \|\partial^{j}_{t} V\|_{W^{n,1}} \lesssim
  \textstyle
  \sum_{\mu=0}^{j}\|u\|_{L^{\infty}}^{p-3-\mu}
  \|u\|_{L^{2}}^{2}
  \|\partial^{j_{1}}_{t}u\|_{W^{n,\infty}}
  \dots
  \|\partial^{j_{\mu}}_{t}u\|_{W^{n,\infty}}
\end{equation}
where $j_{1}+\dots +j_{\mu}=j$.
By Sobolev embedding and energy estimates \eqref{eq:boundsHk}
we have
\begin{equation*}
  \textstyle
  \|\partial^{j_{\mu}}_{t}u\|_{W^{n,\infty}}
  \lesssim
  \|u\|_{Y^{\infty,2;N}}
  \le
  C
  \Bigl(\|(u_{0},u_{1})\|_{M},
    \|u_{0}\|_{H^{N}},\|u_{1}\|_{H^{N-1}}
  \Bigr)
\end{equation*}
provided $N>n+  \frac n2 +j$,
and this implies, recalling \eqref{eq:tm1},
\begin{equation}\label{eq:thirdV}
  \|\partial^{j}_{t} V(t)\|_{W^{n,1}} \lesssim
  \bra{t}^{j+3-p}
\end{equation}
provided
\begin{equation}\label{eq:conddat}
  \|(u_{0},u_{1})\|_{M}+
      \|u_{0}\|_{H^{N}}+\|u_{1}\|_{H^{N-1}}<\infty,
  \qquad
  \textstyle
  N>\frac 32n+j.
\end{equation}
Now let $m\ge1$ to be chosen and assume $u_{0},u_{1}$
satisfy \eqref{eq:conddat} with
\begin{equation*}
  \textstyle
  N> m+\frac 32n
\end{equation*}
while $w_{0},w_{1}$ satisfy
the compatibility conditions of order $m$.
We see that if $p>m+4$ the assumptions of
Proposition \ref{pro:enerstr} are satisfied and the estimates
\eqref{eq:L2enV}, \eqref{eq:EnEstV} are valid.

Consider now the problem
\begin{equation}\label{eq:pbV}
  \square w+V(t,x)w=w^{2}\cdot F[u,w],
  \qquad
  w(0,x)=w_{0},
  \quad
  w_{t}(0,x)=w_{1},
  \quad
  w(t,\cdot)\vert_{\partial \Omega}=0
\end{equation}
where $w_{j}=v_{j}-u_{j}$, $j=0,1$, and $V(t,x)=p|u|^{p-1}$.
Define the spacetime norm
\begin{equation*}
  \textstyle
  M(T)=
  \|w\|_{Y^{\infty,2;m+1}_{T}}+
  \|w\|_{Y^{q,r;m}_{T}}
\end{equation*}
where the couple $(q,r)$, satisfying \eqref{eq:strsubadm},
will be precised below.
Our goal is to prove the a priori estimate
\begin{equation}\label{eq:claim}
  M(T)\lesssim \epsilon+M(T)^{2}+M(T)^{p}
\end{equation}
where $\epsilon$ is a suitable norm of the data $(w_{0},w_{1})$.

Applying \eqref{eq:L2enV}, \eqref{eq:EnEstV} to 
\eqref{eq:pbV} we get
\begin{equation}\label{eq:basicM}
  M(T)\lesssim 
  \|w_{0}\|_{H^{m+1}}+
  \|w_{1}\|_{H^{m}\cap L^{\frac{2n}{n+2}}}+
  \|w^{2}F[u,w]\|
  _{Y^{q,r;m}_{T}\cap L^{1}_{I}L^{\frac{2n}{n+2}}}
\end{equation}
We must estimate the last term. We begin by noticing that
\begin{equation*}
  \|w^{2}F[u,w]\|_{L^{\frac{2n}{n+2}}}
  \lesssim
  \|w\|_{L^{\frac{4n}{n+2}}}^{2}
  \left(
    \|u\|_{L^{\infty}}^{p-2}+\|w\|_{L^{\infty}}^{p-2}
  \right)
\end{equation*}
and if we assume $m+1>\frac n2$ so that
$\|w\|_{L^{\infty}}\lesssim\|w\|_{H^{m+1}}$, we get
\begin{equation*}
  \|w(t)\|_{L^{\frac{4n}{n+2}}}^{2}
  \le
  \|w\|_{L^{2}}^{\frac{n+2}{n}}
  \|w\|_{L^{\infty}}^{\frac{n-2}{n}}
  \lesssim\|w\|_{H^{m+1}}^{2}
  \le M(T)^{2}
\end{equation*}
for $0\le t\le T$. Since $|u(t,\cdot)|\lesssim \bra{t}^{-1}$
(recall \eqref{eq:tm1}), we have
\begin{equation*}
  \|w^{2}F[u,w]\|_{L^{\frac{2n}{n+2}}}
  \lesssim
  M(T)^{2}(\bra{t}^{2-p}+
  \bra{t}^{(2-p)\frac{n-1}{2}}M(T)^{p-2})
\end{equation*}
and we conclude
\begin{equation}\label{eq:firstM}
  \|w^{2}F[u,w]\|_{L^{1}_{T} L^{\frac{2n}{n+2}}}ds
  \lesssim
  M(T)^{2}+M(T)^{p}
\end{equation}
provided $p>3$.
We next estimate
\begin{equation*}
  \textstyle
  \|w^{2}F[u,w]\|_{Y^{1,2;m}_{T}}=
  \sum_{j+|\alpha|\le m}
  \|\partial^{j}_{t}\partial^{\alpha}_{x}(w^{2}F[u,w])\|
  _{L^{1}_{T}L^{2}}.
\end{equation*}
Recalling the expression of $F$ in \eqref{eq:expF}, we
may expand the derivative as a finite sum
\begin{equation*}
  \textstyle
  \partial^{j}_{t}\partial^{\alpha}_{x}(w^{2}F[u,w])=
  \sum\int_{0}^{1}
  G \cdot W_{1}W_{2} U_{1}\cdot \dots \cdot U_{\nu}d \sigma
\end{equation*}
where $0\le \nu\le m$ and, assuming $p\ge m+2$,
\begin{itemize}
  \item 
  $W_{1}=\partial^{h_{1}}_{t}\partial^{\alpha_{1}}_{x}w$
  and
  $W_{2}=\partial^{h_{2}}_{t}\partial^{\alpha_{2}}_{x}w$
  \item $U_{k}=
    \partial^{j_{k}}_{t}\partial^{\beta_{k}}_{x}(u+\sigma w)$
  \item $h_{1}+h_{2}+j_{1}+\dots +j_{\nu}=j$,
  $\alpha_{1}+\alpha_{2}+\beta_{1}+\dots +\beta_{\nu}=\alpha$,
  and $j+|\alpha|\le m$
  \item it is not restrictive to assume that
  $j_{\nu}+|\beta_{\nu}|\ge j_{k}+|\beta_{k}|$ for
  all $k$ and that $h_{2}+|\alpha_{2}|\ge h_{1}+|\alpha_{1}|$
  \item 
  $G$ satisfies $|G|\lesssim|u+\sigma w|^{p-\nu-2}$
  so that 
  $\|G\|_{L^{\infty}}\lesssim
    (\|u\|_{L^{\infty}}+\|w\|_{L^{\infty}})^{p-\nu-2}$.
\end{itemize}
We take the $L^{2}$ norm in $x$ of each product,
and we estimate it with
the $L^{2}$ norm of the derivative
of highest order, times the $L^{\infty}$ norm of the remaining
factors; it may happen that the highest order derivative falls
on $w$ or on $u+\sigma w$. Thus we need to distinguish two cases:

(1) The highest order derivative is on $u+\sigma w$, 
that is to say
$j_{\nu}+|\beta_{\nu}|\ge h_{2}+|\alpha_{2}|$.
Then we estimate
\begin{equation*}
  \|GW_{1}W_{2}U_{1}\dots U_{\nu}\|_{L^{2}}
  \le
  \|G\|_{L^{\infty}}
  \|W_{1}\|_{L^{\infty}}
  \|W_{2}\|_{L^{\infty}}
  \|U_{1}\|_{L^{\infty}}
  \dots
  \|U_{\nu-1}\|_{L^{\infty}}
  \|U_{\nu}\|_{L^{2}}.
\end{equation*}
We have for $0\le t\le T$
\begin{equation*}
  \|U_{\nu}\|_{L^{2}}\lesssim M(T).
\end{equation*}
Moreover for $i<\nu$ we have
$j_{i}+|\alpha_{i}|\le \frac m2$,
hence if we assume $m>n-2$ we have by Sobolev embedding
\begin{equation*}
  \|U_{i}\|_{L^{\infty}}
  \lesssim
  \|u\|_{Y^{\infty,2;m+1}_{T}}+
  \|w\|_{Y^{\infty,2;m+1}_{T}}
  \le C_{m+1}+M(T),
\end{equation*}
where in the last step we used the energy estimates
\eqref{eq:boundsHk} for $u$, with
\begin{equation*}
  C_{m+1}=C(\|(u_{0},u_{1})\|_{M},\|u_{0}\|_{H^{m+1}},
  \|u_{1}\|_{H^{m}}).
\end{equation*}
In a similar way
\begin{equation*}
  \|W_{i}\|_{L^{\infty}}\lesssim
  M(T),
  \qquad i=1,2
\end{equation*}
Finally, by \eqref{eq:tm1} and Sobolev embedding
\begin{equation*}
  \|G\|_{L^{\infty}}
  \lesssim
  (\|u\|_{L^{\infty}}+ \|w\|_{L^{\infty}})^{p-\nu-2}
  \lesssim
  (\bra{t}^{-1}+ \|w\|_{W^{m,r}})^{q}
  (1+ M(T))^{p-\nu-q-2}
\end{equation*}
provided
\begin{equation*}
  \textstyle
  m>\frac nr
  \quad\text{and}\quad 
  p\ge m+q+2.
\end{equation*}
Summing up we have, for $m>\max\{n-2,n/r\}$, $p\ge m+q+2$,
$0\le t\le T$
\begin{equation*}
  \|GW_{1}W_{2}U_{1}\dots U_{\nu}\|_{L^{2}}
  \lesssim
  (\bra{t}^{-1}+ \|w\|_{W^{m,r}})^{q}
  (1+ M(T))^{p-\nu-q-2}
  M(T)^{2}
  (C_{m+1}+M(T))^{\nu}.
\end{equation*}
We now integrate in $t$ on $[0,T]$ to get
\begin{equation*}
  \textstyle
  \|GW_{1}W_{2}U_{1}\dots U_{\nu}\|_{L^{1}_{T}L^{2}}
  \lesssim
  M(T)^{2}(1+M(T))^{p-q-2}
  \int_{0}^{T}(\bra{t}^{-q}+\|w\|_{W^{m,r}}^{q})dt
\end{equation*}
\begin{equation}\label{eq:finalM}
  \lesssim
  M(T)^{2}(1+M(T))^{p-2}
\end{equation}
with an implicit constant depending on
$\|(u_{0},u_{1})\|_{M}$, $\|u_{0}\|_{H^{m+1}}$, 
$\|u_{1}\|_{H^{m}}$.

(2) The highest order derivative is on $w$, 
that is to say
$j_{\nu}+|\beta_{\nu}|\le h_{2}+|\alpha_{2}|$.
In this case we estimate
\begin{equation*}
  \|GW_{1}W_{2}U_{1}\dots U_{\nu}\|_{L^{2}}
  \le
  \|G\|_{L^{\infty}}
  \|W_{1}\|_{L^{\infty}}
  \|W_{2}\|_{L^{2}}
  \|U_{1}\|_{L^{\infty}}
  \dots
  \|U_{\nu-1}\|_{L^{\infty}}
  \|U_{\nu}\|_{L^{\infty}}.
\end{equation*}
Proceeding in a similar way, we obtain again
\eqref{eq:finalM}.

Summing up, recalling \eqref{eq:basicM}, we have proved
\begin{equation}\label{eq:finalMbis}
  M(T)\lesssim 
  \|w_{0}\|_{H^{m+1}}+
  \|w_{1}\|_{H^{m}\cap L^{\frac{2n}{n+2}}}+
  M(T)^{2}+M(T)^{p}.
\end{equation}
The implicit constant depends on
$\|(u_{0},u_{1})\|_{M}+\|u_{0}\|_{H^{m+1}}+\|u_{1}\|_{H^{m}}$.
The conditions on the parameters are
\begin{equation*}
  \textstyle
  p\ge m+q+2,
  \qquad
  m> \frac nr,
  \qquad
  m>n-2
\end{equation*}
and $(q,r)$ satisfy \eqref{eq:strsubadm}, that is to say
\begin{equation*}
  \textstyle
  \delta\in(0,1),
  \qquad
  p\ge m+\frac 2 \delta+2,
  \qquad
  m>n-2,
  \qquad
  m>\frac n2-1-\frac \delta2.
\end{equation*}
All the constraints are satisfied if
\begin{equation}\label{eq:cho}
  m=n-1,
  \qquad
  p>n+3
\end{equation}
with $\delta<1$ chosen accordingly.
Recall also that in order to apply
Proposition \ref{pro:enerstr} we assumed $p>m+4=n+3$ and
that $u_{0},u_{1}$ are in $H^{N}\times H^{N-1}$ with
compatibility conditions of order $N$, where
$N>m+\frac 32n=\frac 52n-1$.

To conclude the proof it is then sufficient to apply
a standard continuation argument; if the norm
\begin{equation*}
  \|w_{0}\|_{H^{m+1}}+
  \|w_{1}\|_{H^{m}\cap L^{\frac{2n}{n+2}}}
\end{equation*}
of the initial data is sufficiently small with respect
to the hidden constant
\begin{equation*}
  \|(u_{0},u_{1})\|_{M}+\|u_{0}\|_{H^{m+1}}+\|u_{1}\|_{H^{m}}
\end{equation*}
then the quantity $M(T)$ must remain finite as $T\to+\infty$
and global existence is proved.

\section{Weak--strong uniqueness}\label{sec:weak_stro_uniq}

Following \cite{Struwe06}, we prove a more general
stability result which implies Theorem \ref{the:wsuniq}
as a special case. We use the notation
\begin{equation*}
  \textstyle
  E(u)=E(u(t))=\int_{\Omega}
    (\frac{|\partial_{t}u|^{2}+|\nabla_{x}u|^{2}}{2}
      + \frac{|u|^{p+1}}{p+1})dx,
      \qquad
  \Omega=\{|x|>1\}
\end{equation*}
for the energy of a solution $u(t,x)$ at time $t$
of the Cauchy problem
\begin{equation}\label{eq:caugen}
  \square u+|u|^{p-1}u=0,
  \qquad
  u(t,\cdot)\vert_{\partial \Omega}=0.
\end{equation}

\begin{theorem}[]\label{the:stabil}
  Let $I$ be an open interval containing $[0,T]$, $T>0$.
  Let $u,v$ be two distributional solutions to \eqref{eq:caugen} 
  on $I \times \Omega$ such that
  \begin{equation*}
    u\in 
    C(I;H^{2}(\Omega))\cap
    C^{1}(I;H^{1}_{0}(\Omega))\cap
    C^{2}(I;L^{2}(\Omega)),
    \qquad
    u,u_{t}\in
    L^{\infty}(I \times \Omega),
  \end{equation*}
  \begin{equation*}
    v\in 
    C(I;H^{1}_{0}(\Omega))\cap
    C^{1}(I;L^{2}(\Omega))\cap
    L^{\infty}(I;L^{p+1}(\Omega)).
  \end{equation*}
  Assume in addition that $v$ satisfies an energy inequality
  \begin{equation*}
    E(v(t))\le E(v(0)).
  \end{equation*}
  Then the difference $w=v-u$ satisfies the energy estimate
  \begin{equation*}
    E(w(t))\le C e^{Ct}(E(w(0))+\|w(0)\|_{L^{2}(\Omega)}^{2}),
    \qquad
    t\in[0,T]
  \end{equation*}
  where $C$ is a constant depending on
  \begin{equation}\label{eq:const}
    C=C(p,T,\||u|+|u_{t}|\|_{L^{\infty}([0,T]\times \Omega)}).
  \end{equation}
\end{theorem}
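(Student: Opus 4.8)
The plan is to carry out Struwe's relative-energy argument in this exterior setting. Write $f(s)=|s|^{p-1}s$ and $F(s)=\frac{|s|^{p+1}}{p+1}$, so that $F'=f$ and $F$ is convex, and set $w=v-u$. The central object is the \emph{relative energy}
\[
  \mathcal{F}(t)=\frac12\int_\Omega\bigl(|\partial_t w|^2+|\nabla_x w|^2\bigr)\,dx
  +\int_\Omega\bigl(F(v)-F(u)-f(u)w\bigr)\,dx .
\]
Convexity of $F$ makes the second integrand nonnegative, so $\mathcal{F}(t)\ge0$; moreover a pointwise Taylor estimate gives $|w|^{p+1}\lesssim F(v)-F(u)-f(u)w+|u|^{p-1}w^2$, which yields the two comparisons
\[
  E(w(t))\le C\bigl(\mathcal{F}(t)+\|w(t)\|_{L^2(\Omega)}^2\bigr),
  \qquad
  \mathcal{F}(0)\le C\bigl(E(w(0))+\|w(0)\|_{L^2(\Omega)}^2\bigr),
\]
with $C=C(p,\|u\|_{L^\infty})$. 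Everything then reduces to a Gronwall estimate for $\mathcal{F}$.

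Expanding the squares one rewrites
\[
  \mathcal{F}(t)=E(v(t))+E(u(t))+\tfrac{p-1}{p+1}\|u(t)\|_{L^{p+1}}^{p+1}
  -\int_\Omega\bigl(\partial_t v\,\partial_t u+\nabla_x v\cdot\nabla_x u\bigr)\,dx
  -\int_\Omega f(u(t))\,v(t)\,dx ,
\]
and estimates $\mathcal{F}(t)-\mathcal{F}(0)$ summand by summand. One has $E(v(t))\le E(v(0))$ by the assumed energy inequality; $E(u(t))=E(u(0))$ by the energy identity for the strong solution (Lemma~\ref{lem:H2reg}); and the summands $\|u\|_{L^{p+1}}^{p+1}$ and $\int f(u)v$ may be differentiated directly in $t$, which is legitimate because $u$ is smooth with $u\in L^\infty\cap L^{p+1}$ and $v\in C^1(I;L^2)$. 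The only delicate term is the cross term $\int(\partial_t v\,\partial_t u+\nabla_x v\cdot\nabla_x u)$: since $v$ is merely a distributional solution, $\nabla_x\partial_t v$ need not lie in $L^2$ and one cannot differentiate under the integral naively. Instead I would test the weak formulation of $\square v+f(v)=0$ against the admissible functions $\phi=u$ and $\phi=\partial_t u$ (admissible thanks to the stated regularity of $u$ together with $u,\partial_t u\in L^\infty$), substitute $\partial_t^2u=\Delta u-f(u)$, and combine the two resulting identities; after a routine mollification of $v$ in time, which legitimizes the integration by parts $\int\nabla_x\partial_t v\cdot\nabla_x u=-\int\partial_t v\,\Delta u$, this produces
\[
  \Bigl[\int_\Omega\bigl(\partial_t v\,\partial_t u+\nabla_x v\cdot\nabla_x u\bigr)\,dx\Bigr]_0^t
  =-\int_0^t\!\!\int_\Omega\bigl(f(v)\,\partial_t u+f(u)\,\partial_t v\bigr)\,dx\,ds .
\]

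Collecting the pieces, the terms $\int f(u)\,\partial_t v$ cancel, and using $\frac{d}{dt}\|u\|_{L^{p+1}}^{p+1}=(p+1)\int f(u)\,\partial_t u$ together with $\frac{d}{dt}\int f(u)v=\int f'(u)v\,\partial_t u+\int f(u)\,\partial_t v$, the whole expression collapses to
\[
  \mathcal{F}(t)-\mathcal{F}(0)\le\int_0^t\!\!\int_\Omega \partial_t u\cdot R(u,v)\,dx\,ds,
  \qquad R(u,v)=f(v)-f(u)-f'(u)(v-u),
\]
the inequality coming from the energy inequality for $v$. This combination is precisely what makes the argument work on the unbounded domain: isolated quantities such as $\int_\Omega|v|^p$ need not be finite, but $R(u,v)$ is a second-order Taylor remainder, $|R(u,v)|\lesssim(|u|+|v|)^{p-2}w^2\lesssim(1+\|u\|_{L^\infty}^{p-2})w^2+|w|^{p+1}$, hence $R(u,v)\in L^1(\Omega)$ and
\[
  \Bigl|\int_\Omega\partial_t u\cdot R(u,v)\,dx\Bigr|
  \le\|\partial_t u\|_{L^\infty}\,C(p,\|u\|_{L^\infty})\bigl(\|w(t)\|_{L^2}^2+E(w(t))\bigr).
\]
Combining this with the elementary bound $\frac{d}{dt}\|w\|_{L^2}^2\le\|w\|_{L^2}^2+2E(w)$ and the two comparisons from the first paragraph, the quantity $g(t):=E(w(t))+\|w(t)\|_{L^2(\Omega)}^2$ satisfies $g(t)\le C\bigl(g(0)+\int_0^t g(s)\,ds\bigr)$ with $C=C(p,\|u\|_{L^\infty},\|\partial_t u\|_{L^\infty})$, and Gronwall's inequality yields $g(t)\le Cg(0)e^{Ct}$; since $E(w(t))\le g(t)$ and $\mathcal{F}(0)\lesssim g(0)$, this is exactly the asserted estimate with the dependence \eqref{eq:const}.

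I expect the main obstacle to be the rigorous justification of the cross-term identity: making sense of the time evolution of $\int(\partial_t v\,\partial_t u+\nabla_x v\cdot\nabla_x u)$ when $v$ carries only energy regularity, and simultaneously checking that every pairing that occurs is integrable over the unbounded set $\Omega$. Both difficulties dissolve once one commits to working with the single functional $\mathcal{F}$ rather than its summands, so that the genuinely ill-defined quantities appear only inside the convergent combinations $\mathcal{F}(t)$ and $\int_\Omega\partial_t u\cdot R(u,v)\,dx$, supplemented by the standard time-mollification of $v$ used to license the spatial integration by parts.
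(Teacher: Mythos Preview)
Your argument is correct and is essentially the same as the paper's: both follow Struwe's relative-energy scheme, with your $\mathcal{F}(t)$ coinciding with the paper's $A(t)$, your cross-term identity with the paper's computation of $B(t)-B(0)$, and your Taylor remainder $R(u,v)$ with the paper's $|u+w|^{p-1}(u+w)-|u|^{p-1}u-p|u|^{p-1}w$. The only organizational difference is that the paper decomposes $E(v)=E(u)+A(t)+B(t)$ and treats $A$ and $B$ separately, while you expand $\mathcal{F}$ directly; and for the cross term the paper mollifies the strong solution $u$ (approximating it by $C^\infty_c$ functions and passing to the limit in a distributional identity for $\square w$), whereas you mollify $v$ in time and test its equation against $u$ and $\partial_t u$---dual implementations of the same justification.
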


\begin{proof}
  We shall need the following easy estimate of the $L^{2}$ norm
  of $w=v-u$:
  \begin{equation}\label{eq:estL2}
    \textstyle
    \|w(t)\|_{L^{2}}^{2}
    \le
    2\|\int_{0}^{t}w_{t}ds\|_{L^{2}}^{2}+2\|w(0)\|_{L^{2}}^{2}
    \le
    2T\int_{0}^{t}E(w(s))ds+2\|w(0)\|_{L^{2}}^{2}.
  \end{equation}
  The difference $w=v-u$ satisfies in the sense of distributions
  \begin{equation*}
    \square w+|u+v|^{p-1}(u+w)-|u|^{p-1}u=0.
  \end{equation*}
  We expand
  \begin{equation*}
    E(v)=E(u)+A(t)+B(t)
  \end{equation*}
  where
  \begin{equation*}
    \textstyle
    A(t)=
    \int \frac{|\partial_{t}w|^{2}+|\nabla_{x}w|^{2}}{2}dx
    +
    \int (\frac{|u+w|^{p+1}-|u|^{p+1}}{p+1}-|u|^{p-1}uw)dx
  \end{equation*}
  \begin{equation*}
    \textstyle
    B(t)=
    \int(u_{t}w_{t}+\nabla u \cdot\nabla w+|u|^{p-1}uw)dx.
  \end{equation*}
  From the energy inequality for $v$, and
  the conservation of energy for the smooth solution $u$,
  we have
  \begin{equation}\label{eq:finalg}
    0\le E(v(0))-E(v(t))=A(0)-A(t)+B(0)-B(t).
  \end{equation}
  We get easily
  \begin{equation*}
    \textstyle
    \frac{|u+w|^{p+1}-|u|^{p+1}}{p+1}-|u|^{p-1}uw
    =
    p\int_{0}^{1}\int_{0}^{\sigma}|u+\tau w|^{p-1}d \tau d \sigma
    |w|^{2}
  \end{equation*}
  \begin{equation*}
    \ge \textstyle
    \frac{2^{2-p}}{p+1}|w|^{p+1}-\frac p2|u|^{p-1}|w|^{2}
  \end{equation*}
  which implies
  \begin{equation*}
    \textstyle
    A(t)\ge 2^{-p}E(w(t))-C\|w\|_{L^{2}}^{2},
    \qquad
    C=\frac p2\|u\|_{L^{\infty}([0,T]\times \Omega)}^{p-1}.
  \end{equation*}
  Recalling \eqref{eq:estL2}, this gives
  \begin{equation*}
    \textstyle
    A(t)\ge 2^{-p}E(w(t))-
    C\int_{0}^{t}E(w(s))ds
    -C\|w(0)\|_{L^{2}}^{2}
  \end{equation*}
  for some $C=C(T,\|u\|_{L^{\infty}([0,T]\times \Omega)})$.
  On the other hand
  \begin{equation*}
    \textstyle
    \frac{|u+w|^{p+1}-|u|^{p+1}}{p+1}-|u|^{p-1}uw
    \le
    C(p,\|u\|_{L^{\infty}})(|w|^{p+1}+|w|^{2})
  \end{equation*}
  which implies
  \begin{equation*}
    A(0)\le CE(w(0))+C\|w(0)\|_{L^{2}}^{2}
  \end{equation*}
  and in conclusion
  \begin{equation}\label{eq:partial1}
    \textstyle
    A(0)-A(t)\le
    -2^{-p}E(w(t))+
        C\int_{0}^{t}E(w(s))ds
        +C\|w(0)\|_{L^{2}}^{2},
  \end{equation}
  with $C=C(p,T,\|u\|_{L^{\infty}})$.

  In order to estimate $B(t)$, we need the following remark.
  Assume the function
  \begin{equation}\label{eq:assW1}
    W(t,x)\in C([0,T];H^{1}_{0}(\Omega))
    \cap C^{1}([0,T];L^{2}(\Omega))
  \end{equation}
  satisfies in $\mathscr{D}'((0,T)\times \Omega)$,
  for some $q\in(1,\infty)$,
  \begin{equation}\label{eq:assW2}
    \square W=F(t,x),
    \qquad
    F\in L^{q}((0,T)\times\Omega).
  \end{equation}
  Then for all $\chi(t)\in C^{\infty}_{c}((0,T))$
  and $U(t,x)\in C^{\infty}_{c}(\mathbb{R}\times \Omega)$
  we have the identity
  \begin{equation}\label{eq:distrib}
    \textstyle
    \iint_{\Omega} \chi'(t)(U_{t}W_{t}+\nabla U \cdot \nabla W)dxdt
    =
    -
    \iint_{\Omega} \chi(t)(W_{t}\square U+U_{t}F)dxdt.
  \end{equation}
  Now assume
  \begin{equation}\label{eq:assU1}
    U\in 
    C(I;H^{2}(\Omega))\cap
    C^{1}(I;H^{1}_{0}(\Omega))\cap
    C^{2}(I;L^{2}(\Omega)),
    \qquad
    U,U_{t}\in
    L^{\infty}(I \times \Omega)
  \end{equation}
  on an open interval $I \supset [0,T]$, so that
  $\square U\in C(I;L^{2}(\Omega))$
  and $U_{t}\in L^{q'}(I\times\Omega)$.
  We can approximate $U$ with a sequence 
  $U_{\epsilon}\in C^{\infty}_{c}(I \times \Omega)$
  in such a way that
  \begin{equation*}
    \partial_{t}U_{\epsilon}\to U_{t},
    \quad
    \nabla U_{\epsilon}\to \nabla U,
    \quad
    \square U_{\epsilon}\to \square U
    \quad\text{in}\quad 
    L^{2}((0,T)\times\Omega)
  \end{equation*}
  and
  \begin{equation*}
    \partial_{t}U_{\epsilon}\to U_{t}
    \quad\text{in}\quad 
    L^{q'}((0,T)\times\Omega)
  \end{equation*}
  as $\epsilon\to0$
  (e.g., extend $U$ as 0 on $\mathbb{R}^{n}\setminus \Omega$,
  apply a radial change of variables 
  $\widetilde{u}(t,x)=u(t,(1-\epsilon)x)$, truncate with a 
  smooth cutoff
  $\psi(|x|-\epsilon^{-1})$ where $\psi=1$ for $|x|\le1$
  and $\psi=0$ for $|x|\ge2$, and finally convolve with
  a delta sequence of the form 
  $\rho_{\epsilon}(x)\sigma_{\epsilon}(t)$).
  Applying \eqref{eq:distrib} to $U_{\epsilon}$ and letting
  $\epsilon\to0$ we obtain that \eqref{eq:distrib} holds
  for any functions $U,W$ satisfying \eqref{eq:assW1}, 
  \eqref{eq:assW2}, \eqref{eq:assU1}
  and any $\chi\in C^{\infty}_{c}((0,T))$.

  Now, consider 
  a sequence of test functions
  $\chi_{k}(t)\in C^{\infty}_{c}((0,T))$, non negative,
  such that $\chi_{k}\uparrow\one{[0,t]}$ pointwise,
  $t\in(0,T]$. We can write
  \begin{equation*}
    \textstyle
    B(t)=B(0)+\lim\iint \chi_{k}'(t)
    (u_{t}w_{t}+\nabla u \cdot \nabla w+|u|^{p-1}uw)dx dt.
  \end{equation*}
  Applying formula \eqref{eq:distrib} with $U=u$, $W=w$,
  $F=|u|^{p-1}u-|u+v|^{p-1}(u+w)$
  (with $q=(p+1)/p$)
  and using the equations for $u,w$ we obtain the identity
  \begin{equation*}
    \textstyle
    B(t)-B(0)=\lim\iint \chi_{k}(s)(|u|^{p-1}uw_{t}-Fu_{t})
    -\lim\iint \chi_{k}(s)\partial_{t}(|u|^{p-1}uw)
  \end{equation*}
  \begin{equation*}
    \textstyle
    =
    \lim\iint \chi_{k}(s)
    (|u+v|^{p-1}(u+w)-|u|^{p-1}u-p|u|^{p-1}w)u_{t}dxds.
  \end{equation*}
  We have
  \begin{equation*}
    \textstyle
    |u+v|^{p-1}(u+w)-|u|^{p-1}u-p|u|^{p-1}w
    =
    p(p-1)\int_{0}^{1}\int_{0}^{\sigma}|u+\tau w|^{p-2}
    (u+\tau w)d \tau d \sigma
    |w|^{2}
  \end{equation*}
  which implies
  \begin{equation*}
    \left|
    |u+v|^{p-1}(u+w)-|u|^{p-1}u-p|u|^{p-1}w
    \right|
    \le
    C(p,\|u\|_{L^{\infty}})(|w|^{p+1}+|w|^{2}).
  \end{equation*}
  Thus we get
  \begin{equation*}
    B(0)- B(t)\le
    \textstyle
    C
    \int_{0}^{t}[E(w(s))+\|w(s)\|_{L^{2}(\Omega)}^{2}]ds,
    \qquad
    C=C(p,\||u|+|u_{t}|\|_{L^{\infty}([0,T]\times \Omega)}).
  \end{equation*}
  Using \eqref{eq:estL2} we conclude
  \begin{equation*}
    \textstyle
    B(0)-B(t)\le C\int_{0}^{t}E(w(s))ds
    +C\|w(0)\|_{L^{2}}^{2}
  \end{equation*}
  for some $C$ as in \eqref{eq:const}.
  Plugging \eqref{eq:partial1}, \eqref{eq:const} in
  \eqref{eq:finalg} we arrive at
  \begin{equation*}
    \textstyle
    E(w(t))\le
    C\int_{0}^{t}E(w(s))ds+CE(w(0))+C\|w(0)\|_{L^{2}}^{2}
  \end{equation*}
  with $C$ as in \eqref{eq:const}, and by Gronwall's Lemma
  we conclude the proof.

\end{proof}


\end{document}